\newcommand{\TTT}{\mathcal{T}}
\newtheorem{theorem}{Theorem}
\newtheorem{prop}[theorem]{Proposition}
\newtheorem{cor}[theorem]{Corollary}
\newtheorem{lemma}[theorem]{Lemma}
\newtheorem{defn}[theorem]{Definition}
\numberwithin{theorem}{section}
\numberwithin{equation}{section}
\title{Tropically planar graphs}
\author{Desmond Coles, Neelav Dutta, Sifan Jiang, Ralph Morrison, and Andrew Scharf}
\begin{document}
\maketitle

\begin{abstract}
We study tropically planar graphs, which are the graphs that appear in smooth tropical plane curves.  We develop necessary conditions for graphs to be tropically planar, and compute the number of tropically planar graphs up to genus $7$.  We provide non-trivial upper and lower bounds on the number of tropically planar graphs, and prove that asymptotically $0\%$ of connected trivalent planar graphs are tropically planar.
\end{abstract}




\medskip

\section{Introduction}

Tropical geometry studies discrete, combinatorial analogs of objects from algebraic geometry.  In the case of an algebraic plane curve, the tropical analog is a \emph{tropical plane curve}, which has the structure of a one-dimensional polyhedral complex, embedded in $\mathbb{R}^2$ in a balanced way.  Each tropical plane curve has an associated \emph{Newton polygon} $\Delta$, which is a lattice polygon.  The curve is dual to a regular subdivision of $\Delta$, as discussed in \cite{ms}; we call the curve \emph{smooth} if that subdivision is a unimodular triangulation.  A smooth tropical plane curve is illustrated in the middle of Figure \ref{figure:full_example}, with its subdivided Newton polygon pictured on the left.  

\begin{figure}[hbt]
   		 \centering
		\includegraphics[scale=1.3]{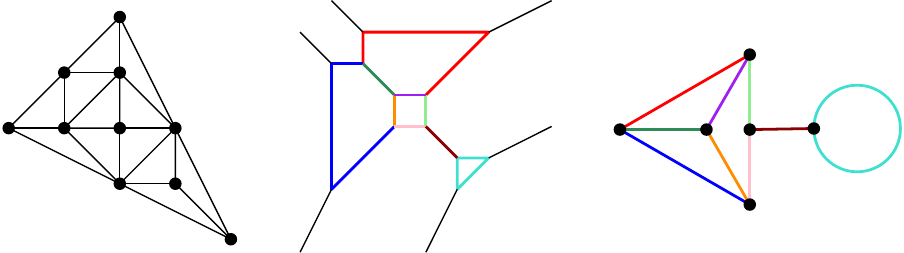}
	\caption{A smooth tropical plane curve in the center, with its subdivided Newton polygon on the left and its skeleton on the right}
	\label{figure:full_example}
\end{figure}

Inside of a smooth tropical plane curve is a graph called its \emph{skeleton}, which is the largest subset onto which the curve admits a deformation retract.   If the polygon has $g$ interior lattice points, then the skeleton has \emph{genus} (that is, first Betti number) equal to $g$.  On the right side of Figure \ref{figure:full_example} we see the skeleton of the smooth tropical curve; it has genus $4$ because the Newton polygon has $4$ interior lattice points.    There is also a natural way to assign lengths to the edges of the skeleton, making it a \emph{metric graph} as in \cite{baker}. In this paper, we will only be concerned with the combinatorial structure of the graph and not the lengths.

\begin{defn} \normalfont
A graph $G$ is said to be  \textit{tropically planar} (or \textit{troplanar} for short) if it is the skeleton of a smooth tropical plane curve.
\end{defn}

Since tropical plane curves are embedded in $\mathbb{R}^2$, all tropically planar graphs are planar.  They are also connected and trivalent since they are dual to unimodular triangulations.  In general, there is no known efficient way to test if a given graph is troplanar, although an algorithm for finding all troplanar graphs of a fixed genus $g$ was designed and implemented in \cite{BJMS}.  In fact, their algorithm went further:  it found all troplanar graphs, and determined which edge lengths were possible on those graphs inside of tropical plane curves.    For genus $g=2,3,4,$ and $5$, they found that there are $2,4,13,$ and $38$ troplanar graphs of genus $g$, respectively. 

In this paper we work to further our understanding of troplanar graphs. We start by developing certain criteria that troplanar graphs must (or must not) satisfy, and by pushing the computations of \cite{BJMS} further to determine the number of troplanar graphs of genus $6$ and genus $7$.  We also determine asymptotic upper and lower bounds on $\mathscr{T}(g)$, the number of troplanar graphs
of genus $g$. Our upper bound in Theorem \ref{theorem:best} shows that $\mathscr{T}(g)=O(2^{\frac{11}{3}g+O(\sqrt{g})})$, which provides one of several proofs that as $g\rightarrow\infty$, most connected trivalent planar graphs are not tropically planar.
Our lower bound in Corollary \ref{cor:upper} shows that $\mathscr{T}(g)=\Omega(\gamma^g)$, where $\gamma\approx 2.47$; this is an improvement on the best previously known result that $\mathscr{T}(g)=\Omega(2^g)$.  There is still a wide gulf between these upper and lower bounds, which will hopefully be narrowed by future research.

Our paper is organized as follows.  In Section \ref{sec:background} we provide necessary background on polygons, graphs, tropical curves, and asymptotics notation. In Section \ref{sec:properties} we present properties of troplanar graphs, reviewing some from previous works as well as developing new ones; we also discuss our computations of troplanar graphs of genus $6$ and $7$ through the lens of these results. In Section \ref{sec:upper} we prove our upper bound on the number of troplanar graphs of a given genus, and in Section \ref{sec:lower} we prove our lower bound.  

\medskip

\noindent \textbf{Acknowledgements.} The authors thank Michael Joswig and Ayush Tewari for helpful comments on an earlier draft of this paper, including finding several errors in the counts at the end of Section \ref{sec:properties}; these errors have been corrected.  The authors are grateful for their support from the 2017 SMALL REU at Williams College, and from the National Science Foundation via Grant DMS1659037.

\section{Background and definitions}
\label{sec:background}

In this section we establish background necessary for stating and proving our results.  This material will cover background on lattice polygons, graphs, and tropical curves, as well as some notation.

\subsection{Lattice polygons}

Any point in $\mathbb{R}^2$ with integer coordinates is called a \emph{lattice point}.  A line segment with lattice endpoints has \emph{lattice length} equal to $1$ less than the number of lattice points on it.  A \emph{lattice polygon} is a polygon whose vertices are lattice points.  Unless otherwise stated, all polygons we consider will be lattice polygons, and will be convex.  Let $\Delta$ be a lattice polygon with $r$ boundary lattice points and $g$ interior lattice points.  We refer to $g$ as the \emph{genus} of the polygon.  It turns out that the numbers $g$ and $r$ encode a great deal of information about the polygon, as illustrated in the following result.

\begin{theorem}[Pick's Theorem]
\label{theorem:picks}
Let $\Delta$ be a lattice polygon with $g$ interior lattice points $r$ boundary lattice points. Then the area of $\Delta$ is given by $$\frac{r}{2}+g-1.$$
\end{theorem}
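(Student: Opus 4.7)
The plan is to prove Pick's Theorem by triangulating $\Delta$ into lattice triangles of minimal area and then applying Euler's formula to the resulting planar graph.

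First, I would show that $\Delta$ admits a \emph{unimodular triangulation}: one whose vertex set is exactly the set of lattice points of $\Delta$ and each of whose triangles has area $1/2$. Start from any triangulation of $\Delta$ with vertices at lattice points, and whenever some triangle contains a non-vertex lattice point (in its interior or on an edge), refine the triangulation by inserting that point. This process terminates because $\Delta$ contains only finitely many lattice points, and upon termination each triangle contains no non-vertex lattice point. A standard lemma then says that such an \emph{empty} lattice triangle has area exactly $1/2$: if $v_0, v_1, v_2$ are its vertices and the edge vectors $v_1 - v_0$ and $v_2 - v_0$ did not generate $\mathbb{Z}^2$, then the fundamental parallelogram they span would contain a lattice point distinct from its four corners, and reflecting through the center of that parallelogram if necessary yields a lattice point on or inside the triangle that is not a vertex, contradicting emptiness.

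Second, I would apply Euler's formula $V - E + F = 2$ to the planar graph determined by this triangulation. Let $T$ denote the number of triangles. Then $V = g + r$, since every lattice point of $\Delta$ is a vertex, and $F = T + 1$, counting the unbounded face. Each triangle contributes three edges; an interior edge of the triangulation belongs to two triangles, and each of the $r$ boundary edges of $\Delta$ belongs to exactly one, so $3T = 2(E - r) + r$, whence $E = (3T + r)/2$. Substituting into Euler's formula and solving yields $T = 2g + r - 2$. Since each unimodular triangle has area $1/2$, the area of $\Delta$ equals $T/2 = g + r/2 - 1$, as claimed.

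The main obstacle is the geometric lemma underpinning the first step, that an empty lattice triangle has area $1/2$; once it is in hand the rest of the proof is bookkeeping with Euler's formula. I would either give the fundamental parallelogram argument sketched above or, equivalently, observe that $\mathrm{GL}_2(\mathbb{Z})$ acts transitively on ordered bases of $\mathbb{Z}^2$, so any empty lattice triangle is $\mathrm{GL}_2(\mathbb{Z})$-equivalent (up to translation) to $\conv\{(0,0), (1,0), (0,1)\}$, which has area $1/2$.
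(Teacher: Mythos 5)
The paper states Pick's Theorem as classical background and supplies no proof of its own, so there is no in-paper argument to compare against. Your proof is the standard one and is correct: refine an arbitrary lattice triangulation of $\Delta$ until every lattice point is a vertex, invoke the fact that an empty lattice triangle has area $\frac{1}{2}$ (your fundamental-parallelogram/central-reflection argument, or equivalently the transitive $\mathrm{GL}_2(\mathbb{Z})$-action on bases, both work), and then count triangles via Euler's formula. The bookkeeping checks out: $V = g + r$, $F = T + 1$, and $3T = 2E - r$ give $T = 2g + r - 2$, hence area $T/2 = \frac{r}{2} + g - 1$. Two minor points to tighten if you write this out in full: when the inserted lattice point lies on an edge shared by two triangles you must subdivide both of them, and termination is cleanest phrased as saying that the number of lattice points of $\Delta$ that are not vertices of the current triangulation strictly decreases at each step.
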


We say two lattice polygons $\Delta$ and $\Delta'$ are \emph{equivalent} if one is obtained from the other by a matrix transformation $A\in\textrm{PSL}_2(\mathbb{Z})$.  It turns out that if we fix $g\geq 1$, there are only finitely many polygons of genus $g$, up to equivalence.  See \cite[Proposition 2.3]{BJMS} for a discussion of this fact, and see \cite{Castryck2012} for an algorithm to compute all polygons of genus $g$.  The \emph{lattice width} of a polygon $\Delta$ is the width of the smallest horizontal strip containing some polygon $\Delta'$ equivalent to $\Delta$. 

Let $\Delta^{(1)}$ be the convex hull of the $g$ interior lattice points of $\Delta$.  We refer to $\Delta^{(1)}$ as the \emph{interior polygon} of $\Delta$.  If $\Delta^{(1)}$ is a two-dimensional polygon, we say $\Delta$ is \emph{nonhyperelliptic}; otherwise, we say $\Delta$ is \emph{hyperelliptic}.  Thus for a hyperelliptic polygon, $\Delta^{(1)}$ is either the empty set, a single point, or a line segment. Three polygons of genus $3$ are illustrated in Figure \ref{figure:three_polygons}; the first is hyperelliptic, and the other two are nonhyperelliptic.

\begin{figure}[hbt]
   		 \centering
		\includegraphics[scale=0.7]{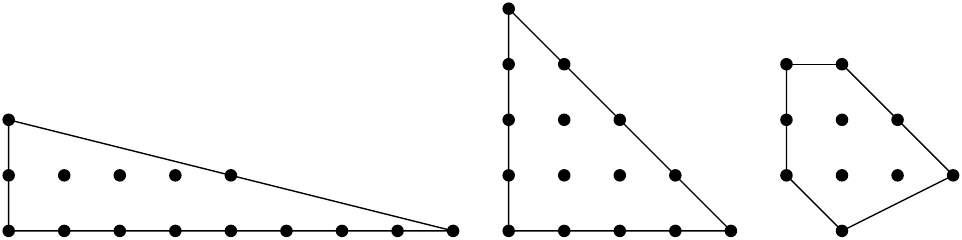}
	\caption{Three lattice polygons; the first is hyperelliptic, and the first two are maximal}
	\label{figure:three_polygons}
\end{figure}

  We say that a lattice polygon $\Delta$ is \emph{maximal} if it is maximal with respect to containment among all polygons with interior polygon $\Delta^{(1)}$, i.e. if there exists no lattice polygon $\Delta'$ properly containing $\Delta$ with $(\Delta')^{(1)}=\Delta^{(1)}$. The first two polygons in Figure \ref{figure:three_polygons} are maximal, while the third is not.   An important tool when studying maximal nonhyperelliptic polygons is \cite[Lemma 2.2.13]{Koelman}, which states that any maximal  nonhyperelliptic polygon $\Delta$ is obtained by ``moving out'' the edges of its interior polygon $\Delta^{(1)}$.  For instance, the middle polygon in Figure \ref{figure:three_polygons} can be obtained by moving out the edges of its interior lattice triangle.  It follows that given any two-dimensional lattice polygon $\Sigma$, either there exists no lattice polygon $\Delta$ whatsoever with $\Delta^{(1)}=\Sigma$, or there exists a unique maximal lattice polygon $\Delta$ with $\Delta^{(1)}=\Sigma$.  See \cite[\S 2]{BJMS} for more discussion.

We now move on to subdivisions of lattice polygons. A \emph{subdivision} of a lattice polygon is a partition of that polygon into lattice subpolygons, such that the intersection of any two subpolygons is a mutual face (either an edge, a vertex, or the empty set).  A \emph{triangulation} is a subdivision where each subpolygon is a triangle.  A triangulation is called \emph{unimodular} if each triangle has the minimum possible area, which by Pick's Theorem is $\frac{1}{2}$. We sometimes call a lattice triangle of area $\frac{1}{2}$ a \emph{unimodular triangle}.  One way to construct a subdivision of $\Delta$ is to assign values $a_{ij}\in \mathbb{R}$ to each lattice point $(i,j)$ in $\Delta$, and to take the convex hull of the points $(i,j,a_{ij})$ in $\mathbb{R}^3$.  The assignment of $a_{ij}$'s to the lattice points is called a \emph{height function}. We then project the lower faces of this polyhedron onto $\Delta$, giving us a subdivision. This process is illustrated on the left in Figure \ref{figure:inducing_regular}. Any subdivision that arises from a height function is called \emph{regular}.  Thus the left triangulation in Figure \ref{figure:inducing_regular} is regular; it turns out that the right triangulation is not regular \cite[Example 2.2.5]{triangulations}.  A \emph{split} in a subdivision is an edge of lattice length one with endpoints on different boundary edges of the lattice polygon. Any split divides a polygon into two lattice polygons.  If both these lattice polygons have positive genus,  we call the split \emph{nontrivial}. For instance, the subdivision in Figure \ref{figure:full_example} has a nontrivial split, separating the triangle into a triangle of genus $1$ and a quadrilateral of genus $3$.

\begin{figure}[hbt]
   		 \centering
		\includegraphics[scale=0.7]{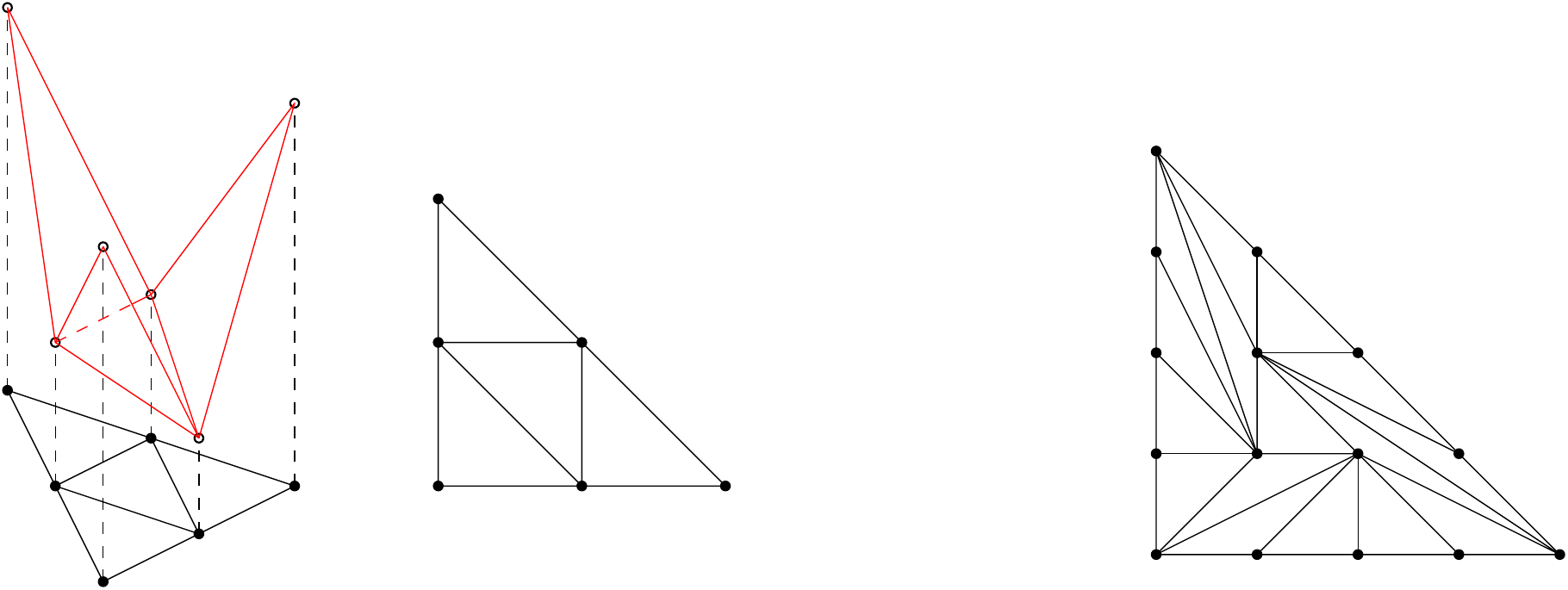}
	\caption{The process of inducing a regular triangulation with a height function; and a nonregular triangulation on the right}
	\label{figure:inducing_regular}
\end{figure}

\subsection{Graphs}  A \emph{graph} $G=(V,E)$ is a finite collection of vertices $V$ joined by a finite collection of edges $E$.  We allow multiple edges between a pair of vertices, and we also allow \emph{loops}, which are edges from a vertex to itself.  We call a graph \emph{connected} if it is possible to move from any vertex to any other vertex using the edges.  We say a graph is \emph{planar} if it can be drawn in $\mathbb{R}^2$ without any edges crossing each other.

The \emph{degree} of a vertex is the total number of edges incident to that vertex, where each loop is counted twice. If a vertex has degree $n$, we will refer to it as an \emph{$n$-valent vertex}. We say that a graph is \emph{trivalent} if every vertex has degree $3$.  An edge $e$ in a graph $G$ is called a \emph{bridge} if the graph $G\setminus\{e\}$ obtained by deleting $e$ from $G$ has more connected components than $G$.  If a graph is connected and has no bridges, we call $G$ \emph{bridgeless}, or equivalently \emph{$2$-edge-connected}.  The connected components that remain after deleting all bridges from a connected graph $G$ are called the \emph{$2$-edge-connected components} of $G$.

An \emph{isomorphism} from $G=(V,E)$ to $G'=(V',E')$ is a bijection $\varphi:V\rightarrow V'$ such that the number of edges between $v,w\in V$ is equal to the number of edges between $\varphi(v),\varphi(w)\in V'$.  If there exists an isomorphism from $G$ to $G'$, we say that $G$ and $G'$ are \emph{isomorphic}.  Virtually every property of a graph is preserved under isomorphism, including the number of bridges and the structure of the $2$-edge-connected components.

The \emph{genus} of a connected graph is $g(G):=|E|-|V|+1$; this is also known as the first Betti number of the graph.  By Euler's Polyhedron Formula, if $G$ is planar then $g(G)$ is the number of bounded regions in any planar drawing of $G$.
The graphs we are most concerned with in this paper are those that are connected and trivalent, with genus $g\geq 2$.  We denote the number of such graphs of genus $g$ as $\mathscr{G}(g)$, and the number of such graphs that are planar as $\mathscr{P}(g)$. The graphs of genus $2$ and $3$ are illustrated in Figure \ref{figure:g2_and_g3}, so $\mathscr{G}(2)=\mathscr{P}(2)=2$ and $\mathscr{G}(3)=\mathscr{P}(3)=5$.  It turns out there is a single nonplanar connected trivalent graph of genus $4$, namely the complete bipartite graph $K_{3,3}$, so $\mathscr{G}(4)=\mathscr{P}(4)+1$.  The connected trivalent graphs were enumerated up to genus $6$ in \cite{balaban}, which found there to be $2$, $5$, $17$, $71$, and $388$ such graphs of genus $2$, $3$, $4$, $5$, and $6$, respectively. We remark that the literature does not always use the term \emph{genus}, and instead stratifies these graphs by the number of vertices; there is no harm in this, since for any connected trivalent graph we have $|V|=2g-2$.

\begin{figure}[hbt]
   		 \centering
		\includegraphics[scale=0.7]{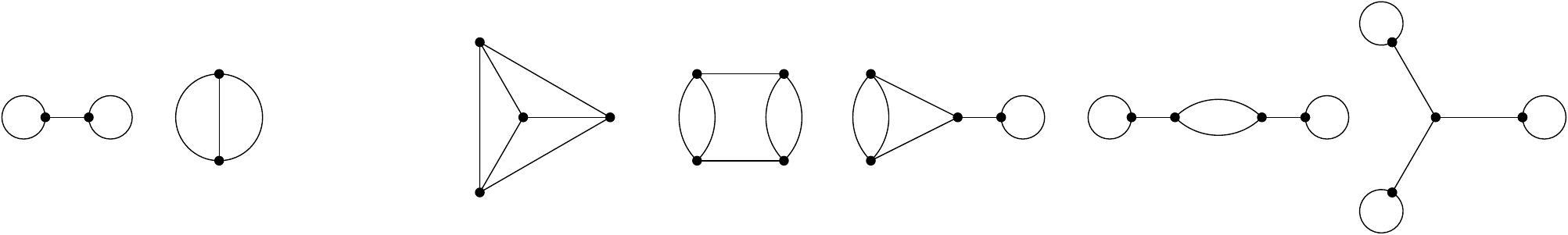}
	\caption{The connected trivalent graphs of genus $2$ and $3$}
	\label{figure:g2_and_g3}
\end{figure}

\subsection{Tropical curves and their skeletons}

We now briefly review the tropical geometry necessary for our paper; see \cite{ms} for more details.  Tropical plane curves are defined by polynomials $p(x,y)$ over the \emph{tropical semiring} $(\mathbb{R}\cup\{\infty\},\oplus,\odot)$, where $a\oplus b=\min\{a,b\}$ and $a\odot b=a+b$.  The subset of $\mathbb{R}^2$ defined by $p(x,y)$ is the set of points where the minimum in the polynomial is achieved at least twice.  By the Structure Theorem \cite[Theorem 3.3.5]{ms}, a   tropical plane curve is a balanced $1$-dimensional polyhedral complex, consisting of edges and rays meeting at vertices.  Forgetting about the embedding into $\mathbb{R}^2$, this means we can interpret a tropical curve as a graph with a $1$-valent vertex at the end of each of the rays.

The \emph{Newton polygon} of a tropical polynomial $p(x,y)$ is the convex hull of all exponent vectors of terms that appear in $p(x,y)$ with non-$\infty$ coefficients.  By \cite[Proposition 3.1.6]{ms},  every tropical plane curve is dual to a regular subdivision of its Newton polygon; in particular, to the regular subdivision induced by the coefficients of the polynomial.  (It follows that every regular subdivision of a lattice polygon has a tropical curve dual to it.)  This duality means that a tropical curves has one vertex for each subpolygon in the subdivision; that two vertices are joined by an edge if and only if the dual subpolygons share an edge; and that there is a ray for each boundary edge of a subpolygon.  

A tropical plane curve is called \emph{smooth} if the corresponding subdivision of its Newton polygon is a unimodular triangulation. Each (finite) vertex is then incident to a total of three rays and vertices.  A smooth tropical plane curve has first Betti number equal to the genus of its Newton polygon.

Now assume that the Newton polygon of tropical curve has genus $g\geq 2$.  The \emph{skeleton} of a smooth tropical plane curve is the graph that is obtained by removing all rays; iteratively retracting any leaves and their edges; and then smoothing over any $2$-valent vertices.  This skeleton is  a connected trivalent planar graph of genus $g$.  As defined in the introduction, any graph that is the skeleton of some smooth tropical plane curve is called \emph{tropically planar}, or simply \emph{troplanar}.  Six smooth tropical plane curves are illustrated in Figure \ref{figure:g2_and_g3_troplanar}, along with skeletons below and dual Newton subdivisions above.  From these examples, we know that the first six graphs in Figure \ref{figure:g2_and_g3} are troplanar; it follows from Proposition \ref{prop:sprawling} that the seventh graph is not.

\begin{figure}[hbt]
   		 \centering
		\includegraphics[scale=1]{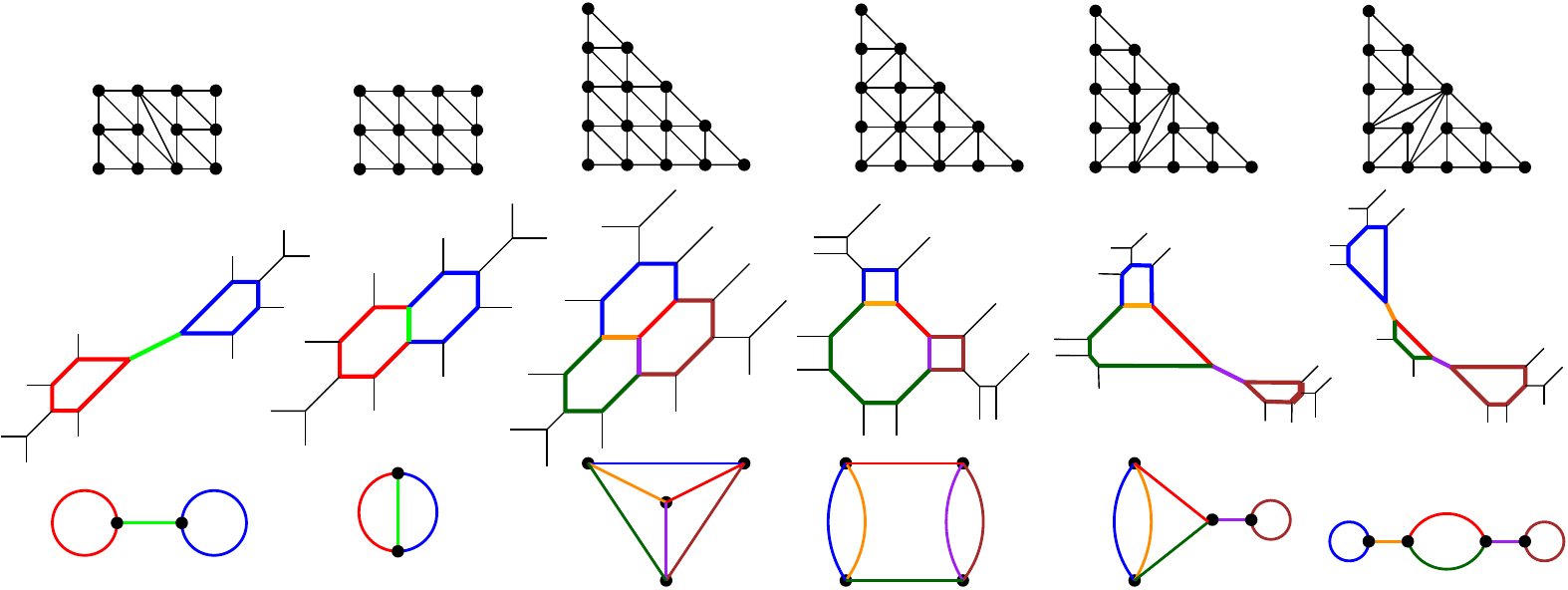}
	\caption{Newton subdivisions, tropical curves, and skeletons}
	\label{figure:g2_and_g3_troplanar}
\end{figure}

The authors of \cite{BJMS} implemented an algorithm for computing all troplanar graphs of a fixed genus, including the achievable edge lengths.  Ignoring this edge length computation, their algorithm can be summarized as follows:

\begin{itemize}
\item[(1)]  Find all lattice polygons with $g$ interior lattice points, up to equivalence.
\item[(2)]  Find all regular, unimodular triangulations of each polygon $P$ found in step (1).
\item[(3)]  For each triangulation found in step (2), compute the dual graph, and find the skeleton.
\end{itemize}
  It is this simplified version of the algorithm that we apply in Section \ref{sec:properties} to find the number of troplanar graphs of genus $6$ and genus $7$.  We also use a key observation made by the authors of \cite{BJMS}, namely that it suffices in step (1) to consider maximal polygons: 
it follows from \cite[Lemma 2.6]{BJMS} that any graph that arises from a nonmaximal polygon of genus $g$ will also arise from a maximal polygon of genus $g$, namely from any maximal polygon containing the original polygon and having the same interior lattice points.

\subsection{Asymptotics notation}  We close this section by briefly recalling big O, little O, and big Omega notation.  Let $f(x)$ and $g(x)$ be functions from $\mathbb{R}$ to $\mathbb{R}$ (although similar notation will hold if the domain is $\mathbb{N}$).  We write $f(x)=O(g(x))$ as $x\rightarrow\infty$ if for all sufficiently large values of $x$, the absolute value of $f(x)$ is at most a positive constant multiple of $g(x)$.  We write $f(x)=o(g(x))$ as $x\rightarrow\infty$ if for all $\varepsilon>0$, there exists $x_0$ such that for $x>x_0$ we have $f(x)\leq \varepsilon g(x)$. There are many different conventions for big Omega notation; for the purposes of this paper, we write $f(x)=\Omega(g(x))$ as $x\rightarrow\infty$ if $g(x)=O(f(x))$ as $x\rightarrow\infty$.  When clear from context, we omit the ``as $x\rightarrow\infty$'' from all these notations.  We write $f(x)\sim g(x)$ when $\lim_{x\rightarrow\infty}\frac{f(x)}{g(x)}=1$.

\section{Properties of troplanar graphs}
\label{sec:properties}

In this section we will discuss some properties and invariants of troplanar graphs. As already  noted, they are connected trivalent planar graphs. In general, they are a difficult family of graphs to study.  For example, the set of troplanar graphs is not minor closed:  the graph of genus $4$ from Figure \ref{figure:minor} is troplanar, as demonstrated by the pictured smooth tropical curve containing it as a skeleton; but the minor obtained from collapsing the central cycle is not troplanar, due to Proposition \ref{prop:sprawling} below.

\begin{figure}[hbt]
   		 \centering
		\includegraphics[scale=1]{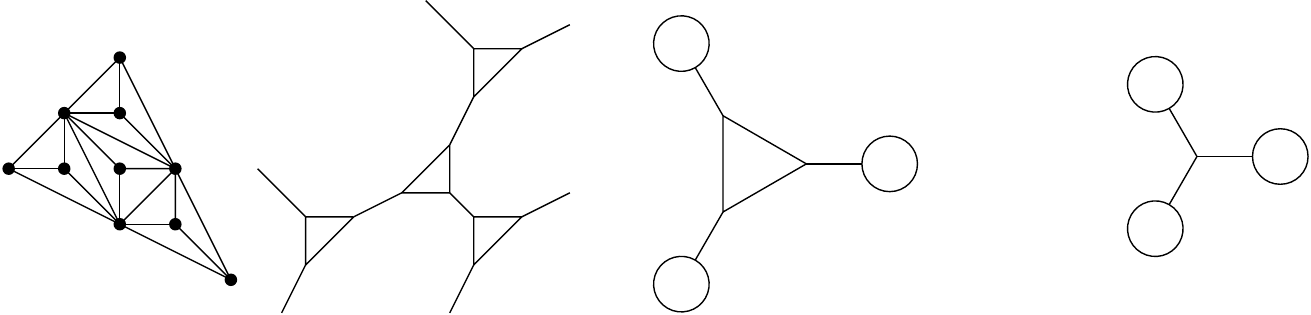}
	\caption{A troplanar graph of genus $4$, with a non-troplanar minor of genus $3$}
	\label{figure:minor}
\end{figure}

We review three previously known criteria for deducing that a graph {fails} to be troplanar. First, if a graph is nonplanar, then it is not troplanar. We say that a connected graph is \emph{sprawling} if it has a vertex such that deleting the vertex from the graph creates three or more components.  We say a planar embedding of a graph is \emph{crowded} if either two faces share two or more edges with one another, or one face shares an edge with itself.  If a graph is planar such that all its planar embeddings are crowded, we say that the graph itself is \emph{crowded}.

\begin{prop}[Proposition 4.1 in \cite{cdmy}]\label{prop:sprawling}
If $G$ is sprawling, then $G$ is not troplanar. \end{prop}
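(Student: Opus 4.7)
The plan is to assume for contradiction that $G$ is sprawling and troplanar---so $G$ is the skeleton of a smooth tropical plane curve $\Gamma \subset \mathbb{R}^2$ dual to some unimodular triangulation $\mathcal{T}$ of a lattice polygon $\Delta$---and derive a contradiction by combining the planar topology of $G$ with the duality between $\Gamma$ and $\Delta$. Let $v$ be a sprawling vertex of $G$, so that $G \setminus v$ splits into three components $C_1, C_2, C_3$. Since $G$ is trivalent at $v$, each of the three edges $e_i$ at $v$ is the only connection between $v$ and $C_i$, so each $e_i$ is a bridge of $G$. In the planar embedding of $G$ coming from $\Gamma \subset \mathbb{R}^2$, a bridge is adjacent to only one face (the same face lies on both sides), so rotating around $v$ through its three wedges and crossing each $e_i$ in turn keeps us in a single face throughout; hence $v$ lies on the boundary of a unique face $F$ of $G$.

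Next I would translate to the Newton polygon via the tropical-Newton duality. The
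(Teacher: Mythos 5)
Your write-up stops exactly where the proof has to begin, so there is a genuine gap rather than a complete argument. What you have established --- that the three edges at the sprawling vertex $v$ are all bridges, and that $v$ therefore meets only one face of the planar embedding --- is correct but is essentially a restatement of the hypothesis in planar-topological terms; it contains no contradiction yet. Note also that the paper itself does not prove this proposition: it is quoted from Cartwright--Dudzik--Manjunath--Yao, whose original proof uses the balancing condition on the tropical curve, and the paper points to an alternate proof in Brodsky--Joswig--Morrison--Sturmfels that works inside the Newton polygon. Your sketch is clearly headed toward the second route, so the missing content is the following. The vertex $v$ is dual to a unimodular triangle $t$ in the triangulation $\mathcal{T}$, and each bridge $e_i$ is dual to an edge $s_i$ of $t$ that is a split of $\Delta$; moreover each split must be nontrivial, since each component $C_i$ contains a cycle (a trivalent skeleton has no leaves) and hence the piece of $\Delta$ cut off by $s_i$ must contain an interior lattice point of $\Delta$. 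The entire difficulty is to show that a single lattice triangle of area $1/2$ cannot have all three of its edges be nontrivial splits of a convex lattice polygon: one must produce, from the interior lattice points forced to lie beyond each $s_i$, a lattice point whose distance to one of the lines spanned by the $s_i$ is too small, contradicting that $t$ has the minimal area $1/2$ (or, in the original approach, contradicting balancing). This is precisely the kind of convexity-plus-area argument carried out in the paper's proof of Theorem 3.4 for the two-split (TIE-fighter) configuration, and none of it appears in your proposal; the observation about the unique face $F$ is not by itself enough to run it, since you still need to locate the interior lattice points dual to the cycles of the $C_i$ relative to the three splits.
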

The original proof of this property uses a balancing property of tropical curves.  An alternate proof presented in \cite{BJMS} proves that the required dual structure in a Newton polygon cannot arise in a unimodular triangulation.

\begin{prop}[Lemma 3.5 in \cite{morrisonhyp}]
If $G$ is crowded, then $G$ is not troplanar. \end{prop}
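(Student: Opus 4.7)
The plan is to prove the contrapositive: if $G$ is troplanar, then $G$ admits a planar embedding that is not crowded. Suppose $G$ is the skeleton of a smooth tropical plane curve $C \subseteq \mathbb{R}^2$ dual to a unimodular triangulation $T$ of a lattice polygon $\Delta$. Since $C$ is embedded in $\mathbb{R}^2$, the skeletonization process (removing rays, pruning leaves, and smoothing $2$-valent vertices) endows $G$ with a natural planar embedding $\mathcal{E}$ inherited from $C$, and I will argue that $\mathcal{E}$ is not crowded.

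The central structural fact is that each bounded component of $\mathbb{R}^2 \setminus C$ is a convex polygon, being carved out by a finite family of half-planes coming from the defining tropical polynomial, and these convex regions correspond bijectively to the interior lattice points of $\Delta$. Because the rays and leaves of $C$ lie outside every cycle, the bounded convex regions survive the skeletonization process and appear as precisely the bounded faces of $G$ in $\mathcal{E}$. With this correspondence in hand, the non-crowdedness of $\mathcal{E}$ (on its bounded faces) follows from convex geometry: given two distinct bounded faces $F_p, F_q$ with corresponding convex regions $R_p, R_q$, the closures $\overline{R_p}$ and $\overline{R_q}$ intersect in a convex set of dimension at most one (a single line segment, or a point), so the shared boundary collapses under smoothing to at most one edge of $G$; and since each $R_p$ has a simple closed boundary, no edge of $G$ can appear twice in the boundary walk of $F_p$.

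The main obstacle is the unbounded face of $\mathcal{E}$: upon removing the rays of $C$, the $r$ a priori unbounded regions of $\mathbb{R}^2 \setminus C$ (dual to the boundary lattice points of $\Delta$) merge into a single unbounded face, and the clean convexity argument above no longer applies directly. Closing this case requires a separate analysis, using the cyclic order of triangulation edges incident to each lattice point of $\Delta$ together with bookkeeping of which tropical edges survive the skeletonization, to rule out the unbounded face either sharing multiple edges with a bounded face or sharing an edge with itself. Once this is handled, one concludes that $\mathcal{E}$ is a non-crowded planar embedding of $G$, so $G$ is not crowded, as desired.
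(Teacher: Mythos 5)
Your overall strategy is the same as the paper's: exhibit the planar embedding that $G$ inherits from the tropical curve (equivalently, from the dual of the triangulation) and show it is not crowded, so that a graph all of whose embeddings are crowded cannot be troplanar. Your handling of the bounded faces is sound and is a nice geometric rephrasing of the paper's one-line combinatorial justification: the paper observes that two bounded faces of the skeleton are dual to two interior lattice points of $\Delta$, which can be joined by at most one edge of the triangulation, while you observe that the corresponding components of $\mathbb{R}^2\setminus C$ are convex, so their closures meet in at most a single segment, and that each such component has simple closed boundary.

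The gap is the unbounded face, which you explicitly defer (``once this is handled\ldots''), and the deferred step cannot be carried out as you have posed it: under a reading of ``crowded'' that includes the unbounded face, the proposition itself would be false. Any troplanar graph with a bridge --- the genus-$2$ dumbbell, or any chain whose binary string contains a $1$ --- has the unbounded face traversing that bridge twice in every embedding, i.e.\ ``sharing an edge with itself''; and in the curve-induced embedding of the genus-$3$ chain with string $00$, the unbounded face shares two edges (the top and bottom arcs) with the middle bounded face. The resolution is definitional rather than technical: in the cited source, crowdedness is a condition on \emph{bounded} faces only, which is the natural notion here since the bounded faces of the skeleton are exactly the cycles dual to interior lattice points of $\Delta$. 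With that reading, your convexity argument already finishes the proof and the separate analysis you promise is unnecessary; as written, however, your proof is incomplete, and the missing lemma is one you could never supply.
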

This result follows readily from the fact that no  graph dual to a triangulation can have two faces sharing two edges, or a face that shares an edge with itself.  It is not always immediately obvious is a graph if crowded, since we need information about all planar embeddings of that graph; see \cite[\S 3]{morrisonhyp} for methods of checking whether a graph is crowded.

For the $17$ distinct genus $4$ connected trivalent graphs, these properties (nonplanar, sprawling, and crowded) are enough to rule out all non-troplanar graphs:  $13$ of the graphs are troplanar \cite[\S 7]{BJMS}, while $1$ is nonplanar, and $3$ are sprawling.  (It turns out that no genus $4$ graph is crowded.)

\begin{figure}[hbt]
   		 \centering
		\includegraphics[scale=1]{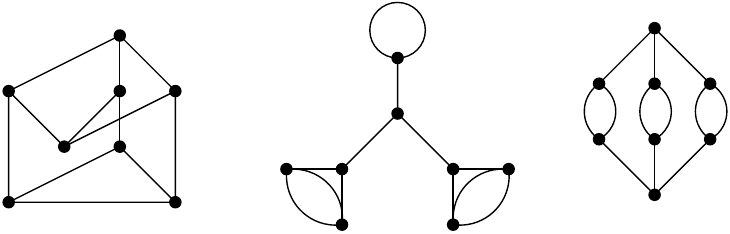}
	\caption{Three genus $5$ graphs easily shown not to be troplanar}
	\label{figure:g5easy}
\end{figure}

Figure \ref{figure:g5easy} illustrates several graphs of genus $5$.  The first is nonplanar, the second is sprawling, and the third is crowded by \cite[Example 3.4]{morrisonhyp}.  Thus, all three are readily determined by our criteria not to be troplanar.  Unfortunately, the results presented thus far are not enough for all graphs of genus $5$:  as computed in \cite{BJMS}, the seven graphs in Figure \ref{figure:g5hard} are not troplanar, even though none are nonplanar, sprawling, or crowded.   

\begin{figure}[hbt]
   		 \centering
		\includegraphics[scale=1]{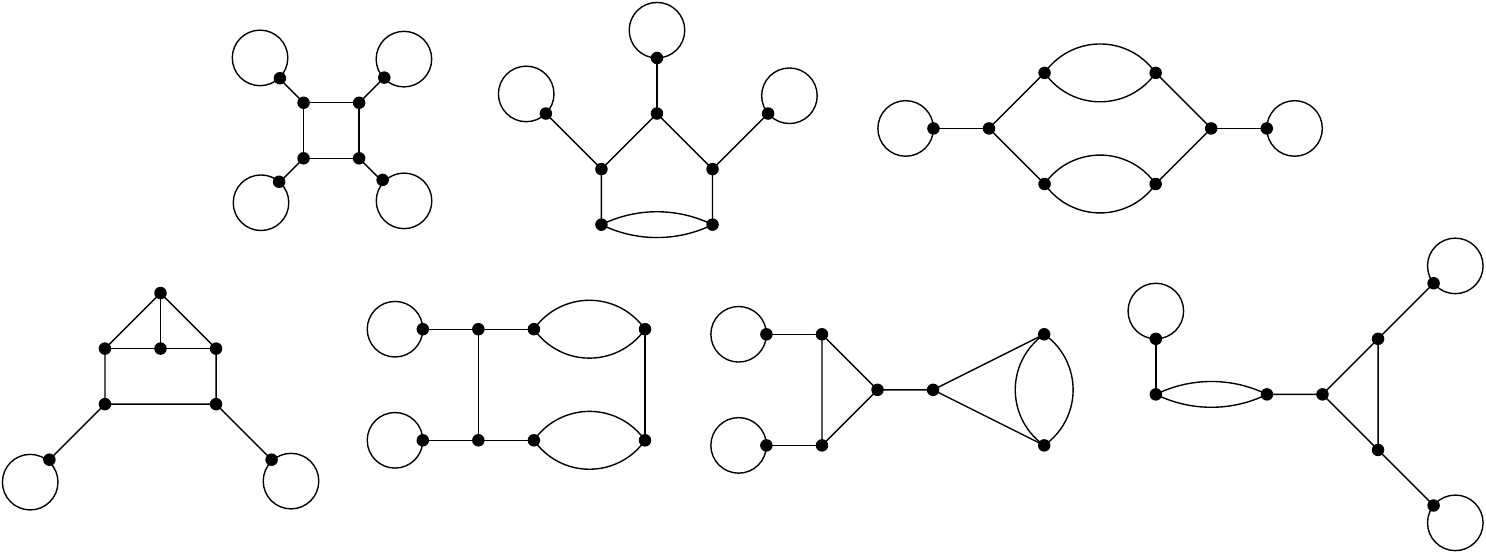}
	\caption{Seven genus $5$ graphs that are not troplanar}
	\label{figure:g5hard}
\end{figure}

We now present new results  to help us further determine which graphs are troplanar and which are not.  The following criterion is similar to the sprawling  criterion in that it provides a  structure that is forbidden for troplanar graphs.
   
\begin{defn}
{ A connected trivalent planar graph is called a \textit{TIE-fighter graph} if it has the form illustrated in Figure \ref{figure:tie}, where each shaded region represent a subgraph of positive genus.}
\end{defn}

\begin{figure}[hbt]
   		 \centering
		\includegraphics[scale=1]{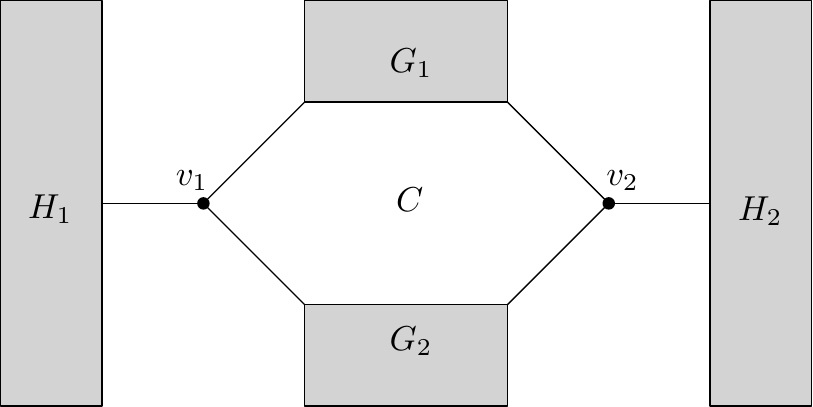}
	\caption{The form of a TIE-fighter graph}
	\label{figure:tie}
\end{figure}

\begin{theorem}\label{theorem:tie}
{Any TIE-fighter graph is not troplanar.}
\end{theorem}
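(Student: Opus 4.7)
The plan is to argue by contradiction, in the same spirit as the sprawling obstruction (Proposition~\ref{prop:sprawling}).  Suppose that a TIE-fighter graph $G$ is troplanar, so that $G$ arises as the skeleton of a smooth tropical plane curve dual to a unimodular triangulation $T$ of some lattice polygon $\Delta$.  The defining combinatorial feature of a TIE-fighter is the presence of two \emph{disjoint} $2$-edge cuts $\{e_1,e_2\}$ and $\{f_1,f_2\}$, each separating a positive-genus wing from the rest of $G$.

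The first step is to translate each of these $2$-edge cuts into combinatorial data on the triangulation $T$.  Using the duality between the tropical curve and $T$ recorded in \cite[Proposition 3.1.6]{ms}, together with the skeleton construction of Section~\ref{sec:background} (removing rays, retracting leaves, smoothing $2$-valent vertices), I would show that any $2$-edge cut of $G$ whose two sides each have positive genus corresponds to a \emph{nontrivial split} of $\Delta$ in $T$: an edge of $T$ of lattice length $1$ going from one boundary edge of $\Delta$ to another, with a positive-genus lattice subpolygon on each side.  This dictionary is implicit in the handling of splits in \cite{BJMS}, and follows because the only way the dual graph can admit a $2$-edge cut with both sides of positive genus is through such a split.

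Next I would analyze the joint configuration of the two resulting splits $s_1, s_2$ inside $\Delta$.  Because the two wings of $G$ lie in disjoint components after cutting, the splits $s_1$ and $s_2$ must be non-crossing, and together with portions of $\partial\Delta$ they bound three lattice subpolygons $\Delta_L, \Delta_C, \Delta_R$, where $\Delta_L$ and $\Delta_R$ carry the two wings of $G$.  The central subpolygon $\Delta_C$, bounded on two of its sides by the length-$1$ splits $s_1$ and $s_2$, must then carry the combinatorial data for the central body of the TIE-fighter strictly between its two $2$-edge cuts.  The heart of the argument is to show that no such $\Delta_C$ can exist: the two length-$1$ boundary segments force $\Delta_C$ to be too thin and its triangulation too constrained to produce the required central dual structure while remaining unimodular.

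The main obstacle is this last impossibility step.  I expect it to reduce to a short case analysis on whether $s_1$ and $s_2$ share a boundary vertex of $\Delta$ or are disjoint, followed by a direct combinatorial argument using Pick's Theorem (Theorem~\ref{theorem:picks}) to control the area and interior lattice points of $\Delta_C$, and the ``moving-out'' description of maximal polygons from \cite[Lemma 2.2.13]{Koelman} to constrain how the boundary triangles of $T$ can attach to $s_1$ and $s_2$.  Taken together, these constraints should rule out every configuration of $\Delta_C$ that could realize the central body of the TIE-fighter, yielding the desired contradiction and proving that no TIE-fighter graph is troplanar.
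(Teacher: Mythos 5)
Your proposal has a genuine gap, and it starts with the combinatorial setup. A TIE-fighter is not characterized by ``two disjoint $2$-edge cuts'': its wings $H_1$ and $H_2$ are attached by \emph{bridges} ($1$-edge cuts), and it is bridges, not $2$-edge cuts, that are dual to splits of $\Delta$. More importantly, the configuration you reduce to --- two non-crossing unit-length splits $s_1,s_2$ cutting $\Delta$ into $\Delta_L,\Delta_C,\Delta_R$ with positive genus on all three pieces --- is \emph{not} impossible. A chain of the form (loop)--bridge--(positive-genus body)--bridge--(loop) is troplanar and is realized by exactly such a triangulation, with a central piece $\Delta_C$ of arbitrarily large genus. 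So the impossibility step you defer to ``a short case analysis'' is false as stated, and no amount of Pick's Theorem bookkeeping on $\Delta_C$ alone can close it; any argument that did would also ``prove'' that chains are not troplanar.

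What your proposal never uses is the feature that actually distinguishes a TIE-fighter from a chain: the two bridges attach at vertices $v_1,v_2$ lying on a \emph{common} cycle $C$ of the central body, and that central body carries two further positive-genus pieces $G_1,G_2$. The paper's proof exploits exactly this. The cycle $C$ must bound a face (else the embedding is crowded) and is therefore dual to a single lattice point $p$; the two unimodular triangles $t_1,t_2$ dual to $v_1,v_2$ must both have $p$ as a vertex, so the two splits $s_1,s_2$ (the edges of $t_1,t_2$ opposite $p$) both lie at lattice distance $1$ from the \emph{same} point $p$. Combined with the interior lattice points forced near $p$ by the positive genus of $G_1$ and $G_2$, a two-case analysis --- $s_1,s_2$ parallel versus meeting at a point $P$ --- shows either that the required interior lattice points cannot exist in the thin region between the splits, or that some lattice triangle of area strictly less than $\tfrac{1}{2}$ would be forced, contradicting unimodularity. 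You would need to recover this shared-vertex property, and to use the positive genus of the central blobs and not just of the wings, before any version of your concluding step could succeed.
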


\begin{proof}  Suppose for the sake of contradiction that there exists a TIE-fighter graph $G$ that is troplanar.
Let $\Delta$ be a convex lattice polygon with $\mathcal{T}$ a unimodular triangulation of $\Delta$ such that the corresponding troplanar graph is a TIE-fighter graph. The planar embedding of $G$ coming from $\mathcal{T}$ must have $C$ as a bounded face with the bridges incident to $v_1$ and $v_2$ exterior to it: otherwise the embedding would be crowded. The face formed by $C$ is thus dual to some lattice point $p$ of $\Delta$. Then there must two unimodular triangles, $t_1$, and $t_2$, where $t_i$ is dual to the vertex $v_i$ in the bridge $e_i$ connected to $H_i$. Since $v_1$ and $v_2$ lie on the cycle $C$ in $G$, the triangles $t_1$ and $t_2$ in $\mathcal{T}$ must intersect in a shared vertex, namely the lattice point $p$ dual to the face $C$. Moreover, and the edges of $t_1$ and $t_2$ not intersecting  must be nontrivial splits of $\Delta$, since they yield the bridges $e_1$ and $e_2$. Without loss of generality let $p$ be at the origin. Let these splits be $s_1$ and $s_2$ (so that $s_i$ is an edge of $t_i$), and let $l_i$ be the line passing through $s_i$.

\begin{figure}[hbt]
   		 \centering
		\includegraphics[scale=.8]{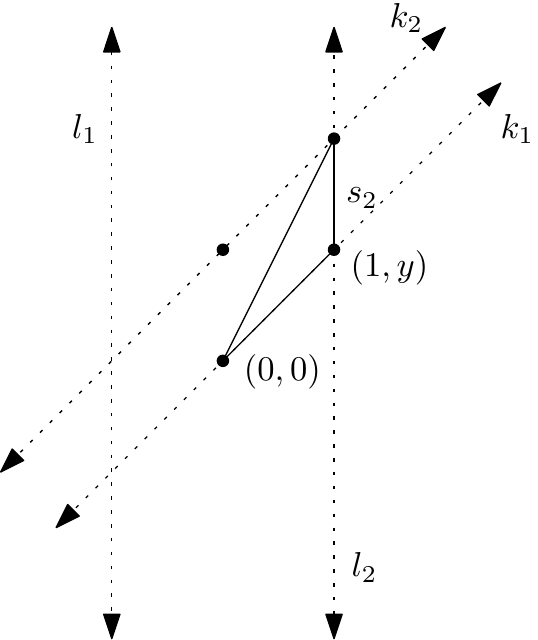}\quad\quad 	\includegraphics[scale=.8]{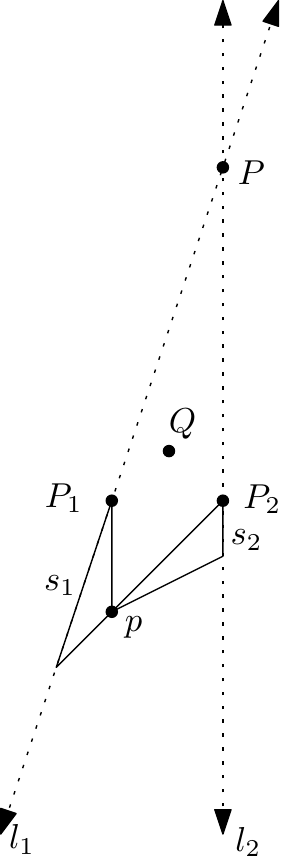}
	\caption{Illustrations for the two cases of our proof}
	\label{figure:tie_cases}
\end{figure}

We now split into two cases, both illustrated in Figure \ref{figure:tie_cases}.  First, assume $l_1$ and $l_2$ are parallel. After a change of coordinates we may assume both $l_1$ and $l_2$ are vertical. Thus any lattice point in $s_1$ has $x$-coordinate -1, and any lattice point in $s_2$ has $x$-coordinate 1; otherwise $t_1$ and $t_2$ would not have area $1/2$ each. Note that $\Delta$ must contain the lattice points $(0,1)$ and $(0,-1)$, because $G_1$ and $G_2$  have genus greater than zero, and any  lattice point in $\Delta$ corresponding to a bounded face from some $G_i$ must be contained in the same connected component of $\Delta\setminus (s_1\cup s_2$) as the origin. Now let the lattice point on $s_2$ with the lower y-coordinate be $(1,y)$; perhaps applying a reflection over the $x$-axis, we may assume $y\geq 0$. Because $\Delta$ is convex and $s_2$ is a split, we now observe that any lattice point dual to some cycle of $H_2$ must be strictly contained in the region bounded by the lines $k_1$ and $k_2$, where $k_1$ passes through $(1,y)$ and the origin, and $k_2$ passes through $(0,1)$ and $(1,y+1)$. But there are no lattice points in the interior of this region, a contradiction.

For the second case, assume $l_1$ and $l_2$ are not parallel. 
Let $P$ be the intersection point of $l_1$ and $l_2$. Without loss of generality we may assume $P$ lies above the $x$-axis. Let $P_i$ be the lattice point in $s_i$ nearest $P$.  Since  $G_1$ and $G_2$ both have genus greater than $0$, there must be some interior lattice point $Q$ of $\Delta$ contained in the interior of the convex hull of $p=(0,0)$, $P_1$, $P_2$, and $P$. We have that $Q$ is closer to the intersection point $P$ than $p$ is, and it follows that $Q$ has Euclidean distance to either $l_1$ or $l_2$ strictly smaller than that of $p$ to that line. This means that for some $i$, the lattice triangle formed by the convex hull of $s_i$ and $Q$ has strictly smaller area than $t_i$. However, this is impossible since $t_i$ has the minimum possible area of $1/2$, a contradiction.

Having reached a contradiction is both cases, we conclude that any TIE-fighter graph is not troplanar.
\end{proof}

The first three graphs in Figure \ref{figure:g5hard} are all TIE-fighter graphs, so Theorem \ref{theorem:tie} provides a proof that they are not troplanar.  We also immediately obtain the following corollary, which forbids a graph from having too many loops in a row.

\begin{cor}\label{corollary:three_loops}
Let $G$ be a troplanar graph with of genus $g\geq 5$.  Then $G$ cannot have three vertices on a path, each incident to an edge that is incident to a loop.
\end{cor}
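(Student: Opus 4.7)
The plan is to show that the hypothesised configuration forces either a sprawling vertex, which contradicts troplanarity by Proposition~\ref{prop:sprawling}, or else a TIE-fighter substructure, which contradicts it by Theorem~\ref{theorem:tie}. Order the three vertices as $w_1, w_2, w_3$ along the path so that $w_2$ lies strictly between $w_1$ and $w_3$ and therefore carries two path-edges. Because a loop-bearing vertex has only one non-loop edge and so cannot lie in the interior of a path, $w_2$ itself carries no loop, and its two path-edges also go to non-loop vertices; so the loop-incident edge at $w_2$ guaranteed by the hypothesis must be $w_2$'s third, off-path edge $w_2u_2$, where $u_2$ is a loop-bearing vertex. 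Since $u_2$ has no other non-loop edge, $w_2u_2$ is automatically a bridge separating a positive-genus lollipop.

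Next, if either path-edge at $w_2$ were itself a bridge, then deleting $w_2$ from $G$ would produce at least three components — the $u_2$-lollipop, the $w_1$-side, and the $w_3$-side — so $G$ would be sprawling, contradicting troplanarity. Hence both path-edges of $w_2$ are non-bridges, and $w_2$ sits in a non-trivial $2$-edge-connected block $B_2$ that also contains both of its path-neighbours. With $w_2u_2$ providing one wing of a prospective TIE-fighter, I would walk along the path from $w_2$ toward $w_1$ (the $w_3$ side is symmetric) to locate a second wing. Either $w_1 \in B_2$ --- in which case $w_1$ cannot carry its own loop (else its $2$-edge-connected component would be the lollipop $\{w_1\}\cup\{\text{loop}\}$, which cannot contain $w_2$), so the hypothesis forces $w_1u_1$ to be a bridge from $B_2$ to a second loop, and any cycle in $B_2$ through $w_1$ and $w_2$ (guaranteed by $2$-edge-connectivity) together with the two loop-bridges provides the TIE-fighter substructure --- or $w_1 \notin B_2$, in which case the path from $w_2$ must exit $B_2$ through some bridge $z_1y_1$ with $z_1 \in B_2 \setminus \{w_2\}$; the far side of that bridge contains $w_1$'s loop and so has positive genus, and any cycle in $B_2$ through $w_2$ and $z_1$ plays the same role. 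In either event, Theorem~\ref{theorem:tie} yields the contradiction.

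The main technical obstacle I anticipate is the small gap between ``$G$ contains a TIE-fighter substructure'' and the literal hypothesis of Theorem~\ref{theorem:tie} that $G$ \emph{is} a TIE-fighter graph, since the cycle extracted from $B_2$ need not be a bounded face of the triangulation-induced embedding. This should be resolved by observing that the proof of Theorem~\ref{theorem:tie} never uses that $G$ coincides with the TIE-fighter structure --- only the cycle, the two bridges out of it, and the positive-genus pieces beyond those bridges are needed --- and it already disposes of the non-face possibility by the remark that the triangulation-induced embedding would otherwise be crowded, itself contradicting troplanarity.
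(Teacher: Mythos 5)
Your overall strategy --- rule out the off-cycle case via sprawling, then exhibit a TIE-fighter --- is the same as the paper's, but there is a genuine gap in the TIE-fighter step, and it sits exactly where the hypothesis $g\geq 5$ has to enter. Your argument never uses $g\geq 5$; yet the statement is false for $g=4$, since the triangle with a lollipop at each vertex is troplanar (Figure \ref{figure:minor}) and contains the forbidden configuration, so any correct proof must invoke the genus bound somewhere. The problem is that a TIE-fighter requires \emph{four} positive-genus regions: the two wings $H_1,H_2$ hanging off the bridges at $v_1,v_2$, \emph{and} the two pieces $G_1,G_2$ attached to the two arcs of the central cycle between $v_1$ and $v_2$. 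The proof of Theorem \ref{theorem:tie} uses the positive genus of $G_1$ and $G_2$ essentially (to produce the interior lattice points $(0,\pm 1)$ in the parallel case and the point $Q$ in the non-parallel case), so your closing remark that only the cycle, the two bridges, and the pieces beyond them are needed is not accurate. With your choice $v_1=w_1$, $v_2=w_2$, one of the two arcs of the cycle is the bare edge $w_1w_2$ and carries genus $0$, so Theorem \ref{theorem:tie} simply does not apply to that configuration, for any $g$.

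The fix is the paper's choice: take the two \emph{outer} vertices $w_1$ and $w_3$ as the bridge-attachment points. Then one arc of the cycle passes through $w_2$ and carries its positive-genus lollipop, and the region attached to the other arc has genus $g-4$. When $g\geq 5$ this is positive and $G$ is a genuine TIE-fighter; when it is zero, $G$ is forced to be exactly the genus-$4$ graph of Figure \ref{figure:minor}, contradicting $g\geq 5$ directly. Everything before this point in your write-up is correct and matches the paper: the observation that $w_2$ cannot itself carry a loop, the sprawling argument showing both path-edges at $w_2$ are non-bridges (hence $w_1,w_2,w_3$ lie on a common cycle), and the identification of each loop-edge as a bridge to a positive-genus lollipop.
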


\begin{proof}
Suppose $G$ is a troplanar graph with three vertices in a path, each incident to an edge that is incident to a loop.  This path must be part of a cycle:  otherwise the graph would be sprawling, as removing the middle vertex would disconnect the graph into three components. Thus, $G$ must have the form illustrated on the left in Figure \ref{figure:three_loops}.  If the boxed area has positive genus, then $G$ is a TIE-fighter graph as illustrated on the right of the figure, and so cannot be troplanar.  Thus the boxed area must have genus $0$, meaning that $G$ is the genus $4$ graph from Figure \ref{figure:minor}.  In particular, we cannot have $g\geq 5$.
\end{proof}

\begin{figure}[hbt]
   		 \centering
		\includegraphics[scale=0.6]{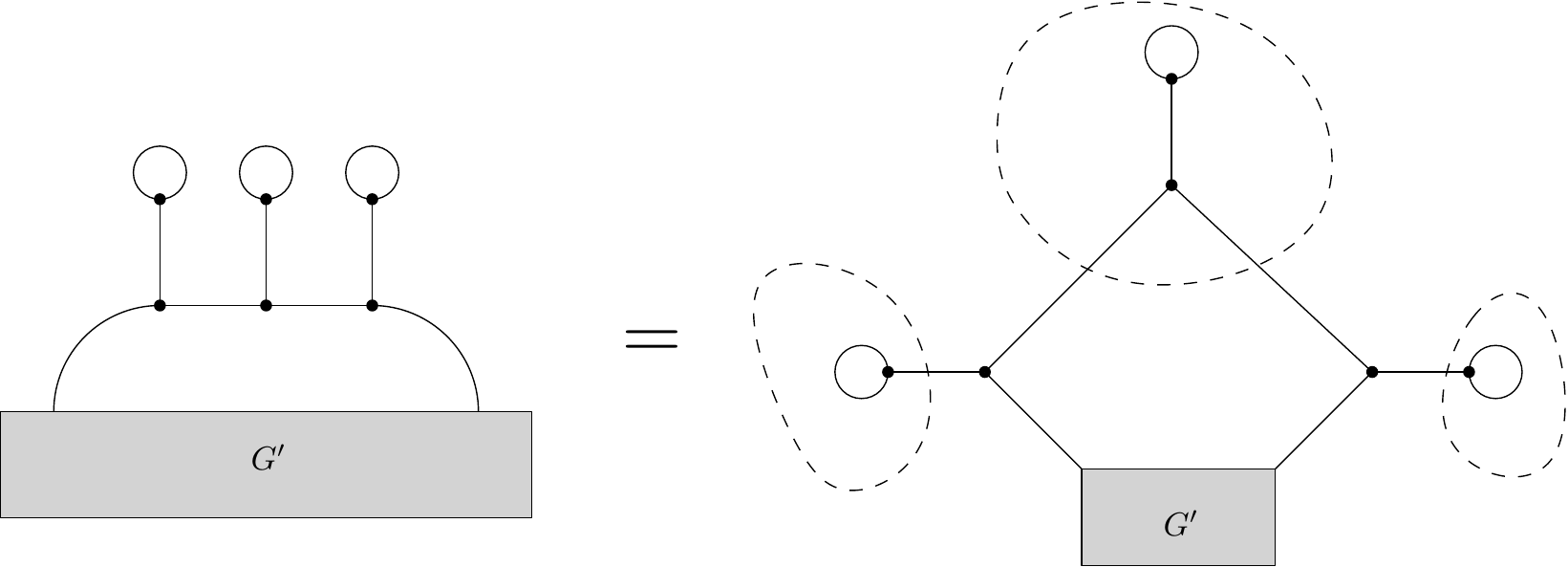}
	\caption{A graph that is a TIE-fighter if $g\geq 5$}
	\label{figure:three_loops}
\end{figure}

We now present two more  results, both involving bridges, that relate the troplanarity of different graphs to one another.

\begin{prop}\label{prop:bridge_deletion}
Let $G$ be a troplanar graph with a bridge $e$. Then the connected components of $G\setminus \{e\}$, after smoothing over $2$-valent vertices, are troplanar.
\end{prop}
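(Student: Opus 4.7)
My plan is to realize each component of $G \setminus \{e\}$ directly as the skeleton of a smooth tropical plane curve by cutting the Newton polygon underlying $G$ along the structure in the triangulation dual to $e$.

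Since $G$ is troplanar, it is the skeleton of a smooth tropical plane curve $C$ dual to a regular unimodular triangulation $\mathcal{T}$ of a lattice polygon $\Delta$. The first step is to identify the bridge $e$ with a nontrivial split $s$ of $\Delta$: an edge of $\mathcal{T}$ of lattice length $1$ with both endpoints on $\partial\Delta$, whose two sides each contain an interior lattice point. Unimodularity of $\mathcal{T}$ forces every edge of $\mathcal{T}$ to have lattice length $1$. If either endpoint of $s$ were an interior lattice point $p$ of $\Delta$, then the edge of $C$ dual to $s$ would lie on the cycle of $C$ dual to $p$, so removing it could not disconnect the skeleton, contradicting that $e$ is a bridge. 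Hence both endpoints of $s$ lie on $\partial \Delta$, and $s$ partitions $\Delta$ into two lattice polygons $\Delta_1$ and $\Delta_2$. Each $\Delta_i$ has positive genus, since each component of $G\setminus\{e\}$ contains a cycle and cycles of the skeleton are dual to interior lattice points.

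Next I restrict $\mathcal{T}$ to each $\Delta_i$ to obtain a unimodular triangulation $\mathcal{T}_i$. To verify $\mathcal{T}_i$ is regular, let $h\colon \Delta \cap \mathbb{Z}^2 \to \mathbb{R}$ be a height function inducing $\mathcal{T}$, and let $h_i$ be its restriction to $\Delta_i \cap \mathbb{Z}^2$. Each triangle of $\mathcal{T}_i$ lifts to a lower face of $\conv\{(p,h(p)) : p \in \Delta \cap \mathbb{Z}^2\}$ and hence also to a lower face of the smaller polyhedron $\conv\{(p,h_i(p)) : p \in \Delta_i \cap \mathbb{Z}^2\}$; by uniqueness of the lower envelope, $h_i$ induces precisely $\mathcal{T}_i$, so $\mathcal{T}_i$ is regular.

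Each pair $(\Delta_i, \mathcal{T}_i)$ therefore defines a smooth tropical plane curve $C_i$, and the last step is to show that its skeleton equals the component of $G \setminus \{e\}$ on the $\Delta_i$ side with its new $2$-valent vertex smoothed. Combinatorially $C_i$ agrees with the $\Delta_i$-side of $C$ except that the edge of $C$ dual to $s$ is replaced in $C_i$ by a ray, since $s$ lies on $\partial \Delta_i$. Running the skeleton construction on $C_i$ removes this ray; the vertex dual to the adjacent triangle becomes $2$-valent and is smoothed, producing exactly the smoothed component of $G \setminus \{e\}$. All other leaf retractions and smoothings on the $\Delta_i$ side of $C$ are unchanged. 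The main subtlety in this plan is the clean identification of the bridge $e$ with a single split $s$: if $e$ instead arises from smoothing a longer chain of $2$-valent vertices of $C$, one must select an appropriate intermediate edge of that chain as the cut and verify it is itself a nontrivial split (using the same interior-endpoint argument applied locally), after which the restriction-of-heights and skeleton-matching steps apply verbatim.
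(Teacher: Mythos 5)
Your proof is correct and follows essentially the same route as the paper's: identify the bridge with a split of $\Delta$, cut into $\Delta_1$ and $\Delta_2$, restrict the height function to see that the restricted triangulations remain regular, and read off the components as the resulting skeletons. You simply supply more detail than the paper does (the duality argument for why the bridge yields a split with boundary endpoints, the lower-face justification of regularity, and the chain-of-$2$-valent-vertices caveat), all of which is sound.
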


By convention, if one of the components is simply a loop, we consider that loop to be troplanar.

\begin{proof}
Let $G$ be a troplanar graph with a bridge $e$, and let $\Delta$ be a Newton polygon with a regular unimodular triangulation $\mathcal{T}$ giving rise to $G$.
The bridge in a troplanar graph $G$ corresponds to a split $\mathcal{T}$. The split subdivides $\Delta$ into two sub polygons, $\Delta_1$ and $\Delta_2$. Since the starting triangulation was regular, the resulting triangulations $\mathcal{T}_1$ and $\mathcal{T}_2$ of $\Delta_1$ and $\Delta_2$ are regular as well:  the height function that induced the triangulation on $\Delta$ can be restricted to each $\Delta_i$ to give the restricted triangulations.  The connected components $G_1$ and $G_2$ of $G\setminus\{e\}$ are then troplanar since they arise from $\mathcal{T}_1$ and $\mathcal{T}_2$.    This is illustrated in Figure \ref{figure:split}.
\end{proof}

\begin{figure}[hbt]\label{figure:split}
   		 \centering
\includegraphics[width=\textwidth]{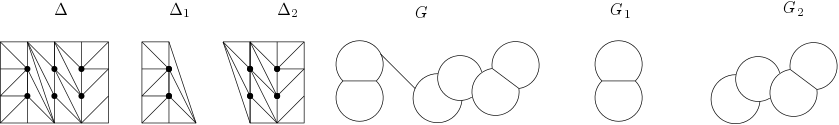}
	\caption{Splitting a polygon into smaller polygons, and splitting a troplanar graph into smaller troplanar graphs }
\end{figure}

This allows us to construct families of non-troplanar graphs of arbitrarily high genus that are not nonplanar, sprawling, crowded, or TIE-fighters graphs. In particular, we can take one of the four graphs on the bottom of Figure \ref{figure:g5hard}, add a bridge $e$ on any edge that is not already a bridge, and attach any troplanar graph of any genus.  This graph cannot be troplanar, since removing $e$ yields two graphs, one of which is not troplanar.

We now present a graph theoretic surgery on bridges for preserving troplanarity. This move is inspired by the well-known bistellar flip in a triangulation. Unfortunately bistellar flips do not in general preserve the regularity of triangulations, but we make use of them in a case where they do.

 Let $b$ be a bridge of a troplanar graph $G$ with end vertices $v$ and $w$, as illustrated in Figure \ref{figure:bridgereduction}. Let the other two edges (possibly non-distinct in the case of a loop) emanating from $v$ be $e_1$, and $e_2$, and let $f_1$, and $f_2$ be the other edges emanating from $w$. Delete $b$, so that  $v$ and $w$ are both $2$-valent. Split $v$ into two distinct univalent vertices $v_1$ and $v_2$, with $v_i$ a $1$-valent vertex incident to $e_i$. Do the same for $w$. Identify $v_1$ with $w_1$ to make a new vertex $v'$, and $v_2$ with $w_2$ to make $w'$, and add an edge $b'$  between $v'$ and $w'$.  This process is illustrated in Figure \ref{figure:bridgereduction}.  We say the new graph is the \textit{bridge reduction of $G$ with respect to $b$.}

\begin{figure}[hbt]\label{figure:bridgereduction}
   		 \centering
\includegraphics[scale=0.8]{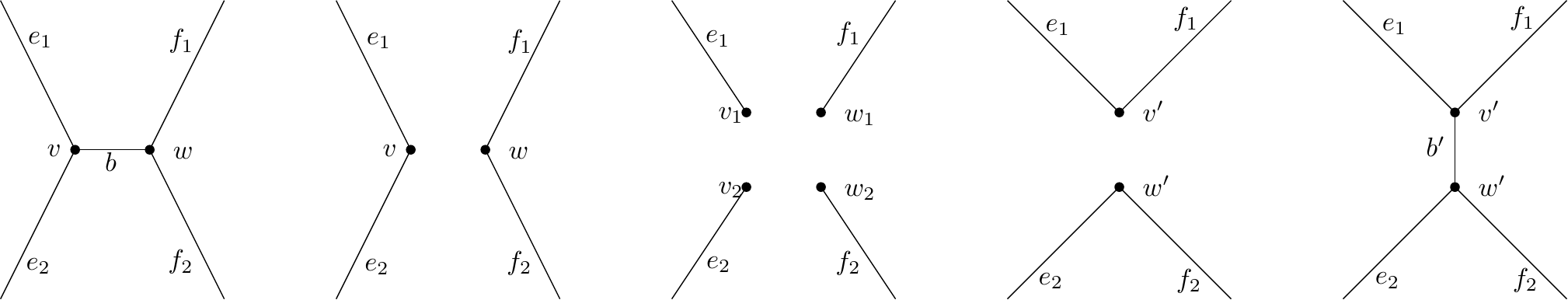}
	\caption{Bridge reduction}
\end{figure}

\begin{prop}\label{prop:bridge_reduction}
Let $G$ be a troplanar graph with a bridge $b$. Then the bridge reduction of $G$ with respect to $b$ is troplanar.
\end{prop}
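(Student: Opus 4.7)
The plan is to realize the bridge reduction of $G$ by applying a bistellar flip to the triangulation $\mathcal{T}$ of a polygon $\Delta$ realizing $G$. Let $s$ be the split of $\mathcal{T}$ dual to the bridge $b$, as identified in the proof of Proposition~\ref{prop:bridge_deletion}. Let $t_v$ and $t_w$ denote the triangles of $\mathcal{T}$ sharing the edge $s$ (dual to the endpoints $v$ and $w$ of the bridge), and let $p_v, p_w$ be the apex vertices of $t_v, t_w$ respectively. After applying a unimodular change of coordinates, we may assume $s$ runs from $a_1 = (0,0)$ to $a_2 = (0,1)$, with $p_v = (-1, y_v)$ and $p_w = (1, y_w)$ for some integers $y_v, y_w$. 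The bistellar flip at $s$ replaces $\{t_v, t_w\}$ with $\{\{a_1, p_v, p_w\}, \{a_2, p_v, p_w\}\}$; a direct computation in the dual confirms that this flip produces exactly the bridge reduction of $G$, since the new vertex dual to $\{a_1, p_v, p_w\}$ is incident to the edges originally dual to $e_1$ (namely $\overline{a_1 p_v}$) and $f_1$ (namely $\overline{a_1 p_w}$), together with the new edge $\overline{p_v p_w}$ in the role of the new bridge $b'$, matching $v'$; the symmetric identification holds for $w'$.

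For the flip to produce a valid regular unimodular triangulation whose skeleton is the bridge reduction, several conditions must hold: the quadrilateral $Q = t_v \cup t_w$ must be convex, the new edge $\overline{p_v p_w}$ must have lattice length $1$ and have endpoints on boundary edges of the (possibly modified) polygon, and the new triangulation must remain regular. A direct calculation with the coordinates above shows that $Q$ is a convex parallelogram if and only if $y_v + y_w = 1$; when this holds, the edge from $(-1, y_v)$ to $(1, y_w)$ has lattice length $\gcd(2, y_w - y_v) = 1$ because $y_w - y_v$ is odd. The main problem therefore reduces to finding a realization of $G$ in which $y_v + y_w = 1$ in these standard coordinates.

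To achieve this, I would modify $\Delta$ as follows: cut it along $s$ into two sub-polygons $\Delta_1$ (containing $p_v$) and $\Delta_2$ (containing $p_w$), apply the shear $(x, y) \mapsto (x, y + cx)$ to $\Delta_2$ with $c = 1 - y_v - y_w$ (this shear fixes $s$ pointwise and shifts $p_w$ so that the new sum is $1$), and re-glue. Regularity is preserved because the shear is unimodular and a bistellar flip in a regular triangulation preserves regularity via a local adjustment of the inducing height function at $p_v$ and $p_w$. The main obstacle will be that shearing $\Delta_2$ alone can destroy convexity of the re-glued polygon at the endpoints of $s$. When this occurs, one would apply a compensating shear to $\Delta_1$ (or a reflection such as $(x, y) \mapsto (x, 1 - y)$ applied to one sub-polygon), and verify that an appropriate combination of such transformations always produces a convex lattice polygon. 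Checking that the endpoints $p_v, p_w$ of the new split edge indeed lie on the boundary of the modified polygon is a further part of this verification, and handling cases where $p_v$ or $p_w$ is originally interior will likely require enlarging one of the sub-polygons before re-gluing.
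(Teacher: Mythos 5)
Your core idea --- realizing the bridge reduction as a bistellar flip at the split dual to $b$ --- is the same as the paper's, but there are two genuine gaps. First, the ``main problem'' you identify (arranging $y_v+y_w=1$ by shearing one half of $\Delta$ and re-gluing) is not actually a problem, and your proposed fix is both unnecessary and unsubstantiated. Because $b$ is dual to a \emph{split}, the endpoints $a_1,a_2$ of $s$ lie on the boundary of the convex polygon $\Delta$; hence the interior angles of the quadrilateral $Q=t_v\cup t_w$ at $a_1$ and $a_2$ are bounded above by the interior angles of $\Delta$ at those points, so $Q$ has no reflex vertex and is already convex --- in your normalized coordinates this forces $y_v+y_w=1$. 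The shear-and-re-glue procedure you sketch in its place is left entirely unverified (you concede that convexity of the re-glued polygon can fail and that ``an appropriate combination of such transformations'' must be found), and no such surgery is needed. Relatedly, the new edge $\overline{p_vp_w}$ need not be a split nor have endpoints on the boundary of the polygon; it only needs to cut $Q$ into two unimodular triangles, which the flip guarantees, so that requirement in your list of conditions is a red herring.

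Second, and more seriously, your regularity argument consists of the assertion that a bistellar flip in a regular triangulation preserves regularity ``via a local adjustment of the inducing height function at $p_v$ and $p_w$.'' This is false in general --- bistellar flips do not preserve regularity, which is precisely why this step is the delicate part of the proof. The paper's argument exploits the split structure: it cuts $\Delta$ along $s$ into $\Delta_1$ and $\Delta_2$, replaces the restricted height functions by equivalent ones that vanish identically on $t_v$ (respectively $t_w$) and are strictly positive on all lattice points outside $Q$, glues them along $s$ to obtain a height function on $\Delta$ whose induced subdivision agrees with $\mathcal{T}$ except that $Q$ appears as a single quadrilateral cell, and then performs a pulling refinement at $p_v$ to insert the edge $\overline{p_vp_w}$. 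Without an argument playing this role, your proof is incomplete exactly where the real content lies.
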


\begin{proof}  For a height function $\omega$, write $\mathcal{T}(\omega)$ for the triangulation induced by $\omega$.  Let $\Delta$ be a lattice polygon and $\omega$ a height function such that the triangulation $\mathcal{T}(\omega)$ is a unimodular triangulation of $\Delta$ giving rise to $G$.  The bridge in $G$ is dual to a split in $\TTT(\omega)$.  Let $a$ and $b$ be the endpoints of the split, and let $c$ and $d$ be the other points in the  two triangles containing the split. We will refer to the quadrilateral containing these triangles be $abcd$. Since $a$ and $b$ form a split and since $\Delta$ is convex, we know that $abcd$ is convex.  The surgery for bridge reduction corresponds to a bistellar flip in $abcd$; that is, it corresponds to removing the split from $a$ to $b$,  subdivide $abcd$ with a segment from $c$ to $d$. This is illustrated in Figure \ref{figure:bistellar}.  All that remains to show is that the triangulation obtained from this bistellar flip is still regular.

\begin{figure}[hbt]
   		 \centering
\includegraphics[scale=1]{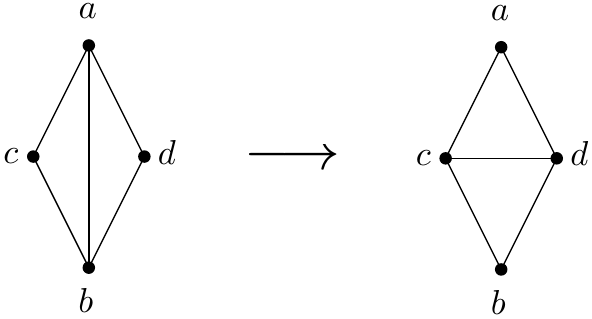}
	\caption{A bistellar flip}
	\label{figure:bistellar}
\end{figure}

The split between $a$ and $b$ subdivides $\Delta$ into two polygons, $\Delta_1$ and $\Delta_2$.  Let $\omega_1=\omega|_{\Delta_1}$ and $\omega_2=\omega|_{\Delta_2}$ be the restrictions of $\omega$ to these polygons.  Note that the triangulation $\TTT(\omega_i)$ is simply the triangulation $\TTT(\omega)$ restricted to $\Delta_i$. 


\begin{figure}[hbt]
   		 \centering
\includegraphics[scale=0.8]{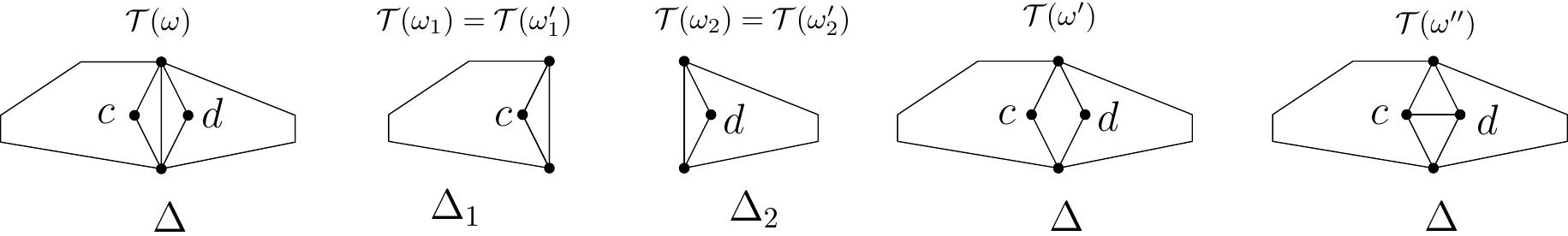}
	\caption{The starting triangulation of $\Delta$; the restricted triangulations of $\Delta_1$ and $\Delta_2$; and the next two triangulations of $\Delta$}
	\label{figure:splitting_polygon}
\end{figure}

By \cite{triangulations}, we may choose a height function $\omega_1'$ on $\Delta_1$ that is identically $0$ on $abc$ such that $\mathcal{T}(\omega_1)=\mathcal{T}(\omega_1')$.  Similarly we may choose $\omega_2'$ on $\Delta_1$ that is identically $0$ on $abd$ such that $\mathcal{T}(\omega_2)=\mathcal{T}(\omega_2')$.  It follows that $\omega_1' > 0$ and $\omega_2' > 0$ on all lattice points of $\Delta_1$ and $\Delta_2$, respectively, outside of $abcd$. Since $\omega_1'$ and $\omega_2'$ agree on $a$ and $b$, we can glue them together to obtain a height function $\omega'$ on $\Delta$.  Because of the positivity of all coefficients away from $abcd$, we have that $\mathcal{T}(\omega')$ is identical to $\mathcal{T}(\omega)$, except that instead of the triangles $abc$ and $abd$ we have the quadrilaterial $abcd$. We can the perform a pulling refinement \cite[\S 16.2]{handbook} by pulling at the lattice point $c$ (or $d$) to obtain  a regular triangulation $\mathcal{T}(\omega'')$ which has an edge from $c$ to $d$, and which is otherwise identical to $\mathcal{T}(\omega')$.  This regular triangulation is precisely the triangulation obtained by performing our bistellar flip, thus completing the proof.
\end{proof}


Proposition \ref{prop:bridge_reduction} helps us relate troplanarity of graphs of the same genus.  For instance, we know that if the seventh graph in Figure \ref{figure:g5hard} were troplanar, then so would the sixth graph.  Contrapositively, if we take it as a given that the sixth graph is not troplanar, then we know the same is true of the seventh.  We can also use the proposition to bound the number of troplanar graphs by the number of $2$-edge-connected troplanar graphs.

\begin{cor}\label{corollary:two-connected}
Let $\mathscr{T}(g)$ be the number of troplanar graphs of genus $g$, and  let $\mathscr{T}^{(2)}(g)$ be the number of $2$-edge-connected troplanar graphs of genus $g$.  Then $\mathscr{T}(g)\leq 2^{g-1}\mathscr{T}^{(2)}(g)$.
\end{cor}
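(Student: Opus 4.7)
My plan is to use Proposition \ref{prop:bridge_reduction} to define a map from $\mathscr{T}(g)$ to $\mathscr{T}^{(2)}(g)$ via iterated bridge reduction, and then to bound each fiber by $2^{g-1}$. First, given a troplanar graph $G$ of genus $g$ with a bridge $b$, I would check that the bridge reduction of $G$ at $b$ replaces $b$ with a new edge $b^*$ that is itself not a bridge: the two new vertices $v'$ and $w'$ each retain one edge extending into each of the two formerly-separated sides of $G \setminus \{b\}$, so $b^*$ lies on a cycle that crosses between both sides. The other bridges of $G$ are unaffected by the local surgery and thus remain bridges, so each reduction strictly decreases the bridge count by exactly one. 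By Proposition \ref{prop:bridge_reduction}, the reduced graph is troplanar, so iterating yields a 2-edge-connected troplanar graph $\phi(G) \in \mathscr{T}^{(2)}(g)$.

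Next, I would bound the number of bridges in any connected trivalent graph of genus $g$ by $g-1$. The 2-edge-connected components of $G$ form a block tree with $k$ nodes; each block has genus at least $1$ and the block genera sum to $g$, so $k \le g$ and there are at most $k - 1 \le g-1$ bridges. Hence the iterated reduction above takes at most $g-1$ steps.

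Finally, I would bound $|\phi^{-1}(H)|$ by $2^{g-1}$ for each $H \in \mathscr{T}^{(2)}(g)$. Letting $\mathscr{T}(g,m)$ denote the number of troplanar graphs of genus $g$ with exactly $m$ bridges, my target is the inductive inequality $\mathscr{T}(g,m) \le \binom{g-1}{m}\,\mathscr{T}^{(2)}(g)$; summing over $m = 0, 1, \dots, g-1$ via the binomial theorem then yields $\mathscr{T}(g) \le 2^{g-1}\mathscr{T}^{(2)}(g)$. To establish this binomial bound, I would analyze the reverse of bridge reduction (a bistellar flip on a dual quadrilateral, as in the proof of Proposition \ref{prop:bridge_reduction}): each un-reduction replaces an edge $e^*$ by a bridge structure with a binary pairing choice at the endpoints of $e^*$, and the pairing is forced by the requirement that a new bridge actually be introduced. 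I expect the main obstacle to lie in making precise that the $m$ un-reductions correspond to a choice of $m$ out of $g-1$ "split positions" in the block tree of the resulting graph—so that the effective number of distinct graphs obtainable from $H$ is a binomial count rather than a larger quantity arising from separately choosing an edge and a pairing at each step.
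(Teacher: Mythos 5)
Your overall strategy---iterated bridge reduction as a map onto $2$-edge-connected troplanar graphs, followed by a bound of $2^{g-1}$ on each fiber---is exactly the paper's, and your verification that each reduction deletes one bridge and creates no new ones is fine. (A small caveat: your claim that every block of $G$ has genus at least $1$ uses that $G$, being troplanar, is not sprawling, so no vertex is incident to three bridges; for general connected trivalent graphs a block can be a single vertex of genus $0$ and the bridge count can reach $g$.) The genuine gap is precisely the step you flag as the ``main obstacle.'' To bound the fiber over a fixed $2$-edge-connected $G'$ by $2^{g-1}$, you need a single set of at most $g-1$ potential un-reduction sites, defined from $G'$ alone, such that every preimage $G$ is obtained by un-reducing at some subset of that set. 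Counting the bridges of each individual preimage $G$ via its own block tree does not supply this: a priori each un-reduction chooses one of the $3g-3$ edges of $G'$ together with a pairing of the four adjacent edge-ends, and nothing in your argument prevents the union, over all preimages, of the sites actually used from being much larger than $g-1$ (nor rules out both pairings at a single edge yielding bridges). Without identifying that ambient set, the inequality $\mathscr{T}(g,m)\leq\binom{g-1}{m}\mathscr{T}^{(2)}(g)$ is asserted rather than proved.

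The paper closes exactly this gap by passing back to the triangulation: fix a regular unimodular triangulation $\mathcal{T}$ of a polygon $\Delta$ giving rise to $G'$, and let $H$ be the subgraph of $\mathcal{T}$ induced on the $g$ interior lattice points, so that $G'$ is the dual graph of $H$. The $2$-edge-cuts of $G'$ at which an un-reduction can be performed correspond to bridges of $H$, and $H$, having only $g$ vertices, has at most $g-1$ bridges; hence at most $2^{g-1}$ subsets, hence at most $2^{g-1}$ preimages. This is the ingredient your proposal is missing. A purely graph-theoretic substitute would need something like the inner dual of a fixed planar embedding of $G'$ playing the role of $H$, together with an argument that every preimage is compatible with that one embedding---which is essentially what fixing the triangulation provides for free.
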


\begin{proof}  Let $G$ be a troplanar graph of genus $G$.  Iteratively reduce all its bridges to obtain a $2$-edge-connected troplanar graph $G'$.  The order in which the bridge reductions are performed does not matter, so $G'$ is well-defined.

We now ask how many distinct graphs could be bridge-reduced to the same graph $G'$. Let us assume that $G'$ is troplanar and $2$-edge-connected.  Let $\mathcal{T}$ be a regular unimodular triangulation of a lattice polygon $\Delta$ giving rise to $G'$.  Viewing $\mathcal{T}$ as a subgraph whose vertices are $\Delta\cap \mathbb{Z}^2$, we then have that $G'$ is the dual graph of the subgraph $H$ of $\mathcal{T}$ induced by the interior lattice points of $\Delta$.  Every $2$-edge-cut of $G'$ corresponds to a bridge in $H$.  Any graph $G$ giving rise to $G'$ via a sequence of bridge reductions can be recoverd by choosing some subset of the bridges in $H$, and reversing the bridge reduction surgery at each corresponding $2$-edge-cut of $G'$.  Since $H$ is a  graph with $g$ vertices, it has at most $g-1$ bridges, so there are at most $2^{g-1}$ subsets of bridges of $H$.  It follows that no more than $2^{g-1}$ graphs could give rise to $G'$ via bridge reductions.

Since every troplanar graph can be bridge reduced to a $2$-edge-connected troplanar graph, we have $\mathscr{T}(g)\leq 2^{g-1}\mathscr{T}^{(2)}(g)$.
\end{proof}





We close this section by presenting some results on the troplanar graphs for genus $6$ and $7$. To find all such troplanar graphs, we must first determine the maximal nonhyperelliptic polygons of genus $6$ and $7$.

\begin{figure}[hbt]
\centering
\includegraphics[scale=0.8]{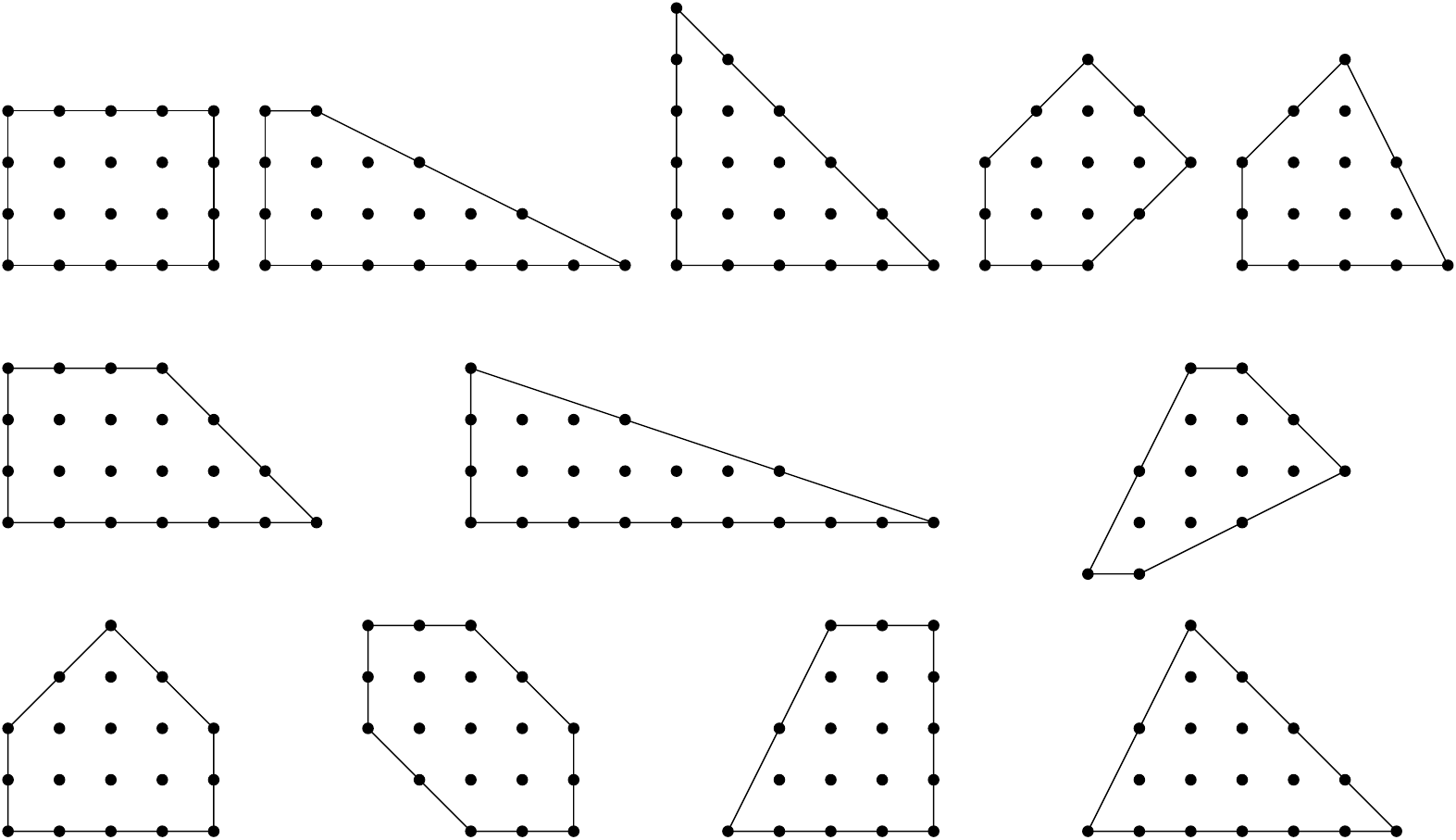}
\caption{The maximal nonhyperelliptic polygons of genus $6$ and genus $7$}
\label{figure:genus_6_7_polygons}
\end{figure}

\begin{prop}\label{prop:maximal67}
The maximal nonhyperelliptic polygons of genus $6$ and genus $7$ are those pictured in Figure \ref{figure:genus_6_7_polygons}.
\end{prop}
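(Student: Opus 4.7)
The plan is to apply the correspondence, recalled in Section~\ref{sec:background} and due to \cite[Lemma~2.2.13]{Koelman}, between maximal nonhyperelliptic polygons $\Delta$ of genus $g$ and two-dimensional lattice polygons $\Sigma$ for which the ``move-out'' construction yields a lattice polygon with interior polygon equal to $\Sigma$. In the nonhyperelliptic case $\Delta^{(1)}$ is two-dimensional, so no lattice point of $\Delta^{(1)}$ can lie on $\partial\Delta$ (such a point would be simultaneously an interior and a boundary lattice point of $\Delta$); consequently $\Delta^{(1)}\subseteq\mathrm{int}(\Delta)$ and $|\Sigma\cap\ZZ^2|=g$. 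Proving the proposition therefore reduces to (a) enumerating, up to equivalence, the two-dimensional lattice polygons $\Sigma$ with $g\in\{6,7\}$ lattice points, and (b) for each such $\Sigma$, checking whether the move-out produces a valid lattice polygon $\Delta$ with $\Delta^{(1)}=\Sigma$.

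For step (a), I would organize the enumeration by the number of vertices of $\Sigma$ (between $3$ and $g$) and by lattice width, both of which are bounded above by small explicit constants when $|\Sigma\cap\ZZ^2|\leq 7$. Within each width one can fix a normal form placing $\Sigma$ inside a horizontal strip of minimum height and then position the remaining lattice points one at a time subject to convexity, collapsing equivalence classes at each stage. As an independent sanity check, one can invoke the algorithm of \cite{Castryck2012} to enumerate all lattice polygons of genus $g$, compute their interior polygons, and extract the list of $\Sigma$'s that actually arise as some $\Delta^{(1)}$.

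For step (b), the move-out is explicit: each edge-line of $\Sigma$ is translated by lattice distance $1$ in its primitive outward normal direction, and $\Delta$ is defined as the intersection of the resulting closed half-planes. By construction $\Delta$ is convex and contains $\Sigma$ in its interior, so it only remains to verify that $\Delta$ is bounded, has lattice vertices, and satisfies $\Delta^{(1)}=\Sigma$; each of these is a short direct check carried out edge by edge (and can fail, e.g.\ when adjacent edge-lines of $\Sigma$ meet at a non-lattice point). Any candidate $\Sigma$ for which the check fails is discarded, and the surviving ones produce precisely the polygons displayed in Figure~\ref{figure:genus_6_7_polygons}.

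The main obstacle is the completeness of the enumeration in step (a): missing even one interior polygon would result in a missing maximal polygon. I would address this by exhausting the small-width cases explicitly and cross-checking the final list against an independent computation, for instance by running Castryck's algorithm on all lattice polygons of genus $g$ and filtering for those that coincide with the move-out of their interior polygon. Matching the resulting list to Figure~\ref{figure:genus_6_7_polygons} then completes the proof.
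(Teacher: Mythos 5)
Your overall strategy matches the paper's: both proofs reduce, via \cite[Lemma 2.2.13]{Koelman}, to enumerating the two-dimensional lattice polygons $\Sigma$ with $6$ or $7$ lattice points and checking which ones can be ``moved out'' to a lattice polygon $\Delta$ with $\Delta^{(1)}=\Sigma$. (Your observation that every lattice point of $\Delta^{(1)}$ is an interior lattice point of $\Delta$, so that $|\Sigma\cap\ZZ^2|=g$, is correct and is implicitly used by the paper as well.) Where the two arguments diverge is in how the finite enumeration of candidates $\Sigma$ is organized. You propose a direct enumeration by vertex count and lattice width, normalized inside a horizontal strip, backed up by a computational cross-check using the algorithm of \cite{Castryck2012}. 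The paper instead exploits a structural dichotomy: it first shows that $\Sigma$ cannot itself be nonhyperelliptic, because a nonhyperelliptic interior polygon would force $\Delta$ to have lattice width at least $5$ by \cite[Theorem 4]{castryckcools}, and the only genus-$6$ polygon of lattice width $5$ is the standard triangle (whose interior polygon has genus $0$) while no genus-$7$ polygon has lattice width $5$. This reduces the search to hyperelliptic $\Sigma$, which are completely classified in \cite[Theorem 10]{Castryck2012}, so the remaining case analysis (genus $0$, genus $1$, and genus $\geq 2$ interior polygons) is short and entirely citable. Your route would work and is arguably more self-verifying if implemented on a computer, but as a written proof it leaves the completeness of the hand enumeration as the hard, unaddressed step --- which you correctly identify as the main obstacle --- whereas the paper's use of the existing hyperelliptic classification discharges exactly that burden.
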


\begin{proof}
The main tool we use here is \cite[Lemma 2.2.13]{Koelman}, which states that any maximal  nonhyperelliptic polygon $\Delta$ is obtained by moving out the edges of its interior polygon $\Delta^{(1)}$.  Thus we need only consider all $2$-dimensional lattice polygons with $6$ or $7$ lattice points, and determine which can be pushed-out to obtain another lattice polygon.

First we will argue that none of the interior polygons can be nonhyperelliptic themselves.  Any nonhyperelliptic polygon has lattice width at least $3$, and so any polygon with nonhyperelliptic interior polygon has lattice width at least $5$ by  \cite[Theorem 4]{castryckcools}.  But the only polygon of genus $6$ with lattice width $5$ is the triangle with vertices at $(0,0)$, $(5,0)$, and $(0,5)$, whose interior polygon has genus $0$; and as computed in \cite[Table 1]{Castryck2012}, no lattice polygon of genus $7$ has lattice width $5$.  Thus, no maximal polygon of genus $6$ or $7$ has a nonhyperelliptic interior polygon.

Thus we need only consider candidate interior polygons that are hyperelliptic. We now use a classification of hyperelliptic lattice polygons $\Delta$ of genus $g\geq 0$ presented in  \cite{Koelman} and reproduced in \cite[Theorem 10]{Castryck2012}.  If the interior polygon has genus $0$, it must either be a right trapezoid of height $1$, or (in the case of $6$ lattice points) a triangle with side lengths $2$.  These cases yield five polygons in Figure \ref{figure:genus_6_7_polygons}, namely the first three polygons of genus $6$ and the first two polygons of genus $7$.  If the interior polygon has genus $1$, then it must be one of the $16$ genus $1$ polygons pictured in \cite[Theorem 10(b)]{Castryck2012}. Two of these have $6$ lattice points, and four of these have $7$ lattice points. It turns out every such polygon can be pushed out, yielding the other two pictured polygons of genus $6$, and the bottom row of polygons of genus $7$.

Finally we deal with the case that the interior polygon has genus at least $2$.  As it is hyperelliptic, it must have one of the forms classified in \cite[Theorem 10(c)]{Castryck2012}.  For a genus $6$ polygon, the interior polygon will either have genus $2$ or $3$, and for genus $7$, it will either have genus $2$ or $3$ or $4$ (since any lattice polygon has at least three boundary points).  Running through the finitely many cases shows that there is a unique such polygon with $6$ or $7$ lattice points that can be pushed out to a lattice polygon: it has $7$ lattice points, and yields the final polygon of genus $7$ pictured on the right of the middle row in Figure \ref{figure:genus_6_7_polygons}.
\end{proof}

We used TOPCOM \cite{TOPCOM} to find all regular unimodular triangulations of these maximal nonhyperelliptic polygons, and then computed the resulting troplanar graphs.  We would have also included the graphs arising from the hyperelliptic polygons, although it turns out all such graphs also arose from nonhyperelliptic polygons for $g=6$ and $g=7$.
In the end we found that there are $152$ troplanar graphs of genus $6$, and $672$ troplanar graphs of genus $7$. The complete lists of these graphs are available at \url{https://sites.williams.edu/10rem/supplemental-material/}. Table \ref{table:graph_numbers} contains, for genus $g$ from $2$ to $7$, the numbers of troplanar graphs $\mathscr{T}(g)$,  of connected trivalent planar graphs $\mathscr{P}(g)$, and  of connected trivalent graphs $\mathscr{G}(g)$. (To our knowledge, the value of $\mathscr{P}(7)$ is not present in the literature, and so has been omitted.) We remark that the sequence $2,4,13,38,152$ does not match any sequence on the Online Encyclopedia of Integer Sequences.

\begin{table}[hbt]
\begin{center}
\begin{tabular}{ |c||c|c|c| } 
 \hline
 $g$ & $\mathscr{T}(g)$ & $\mathscr{P}(g)$ &$\mathscr{G}(g)$ \\
  \hline
 $2$& $2$& $2$& $2$\\
 \hline
  $3$& $4$& $5$& $5$\\
 \hline
  $4$& $13$& $16$& $17$\\
 \hline
  $5$& $38$& $67$& $71$\\
 \hline
  $6$& $152$& $354$& $388$\\
 \hline
  $7$& $672$& $?$& $2592$\\
 \hline
\end{tabular}
\end{center}
\caption{The number of troplanar, planar, and general graphs that are connected and trivalent, by genus}
\label{table:graph_numbers}
\end{table}

\begin{figure}[hbt]
\includegraphics[width=\textwidth]{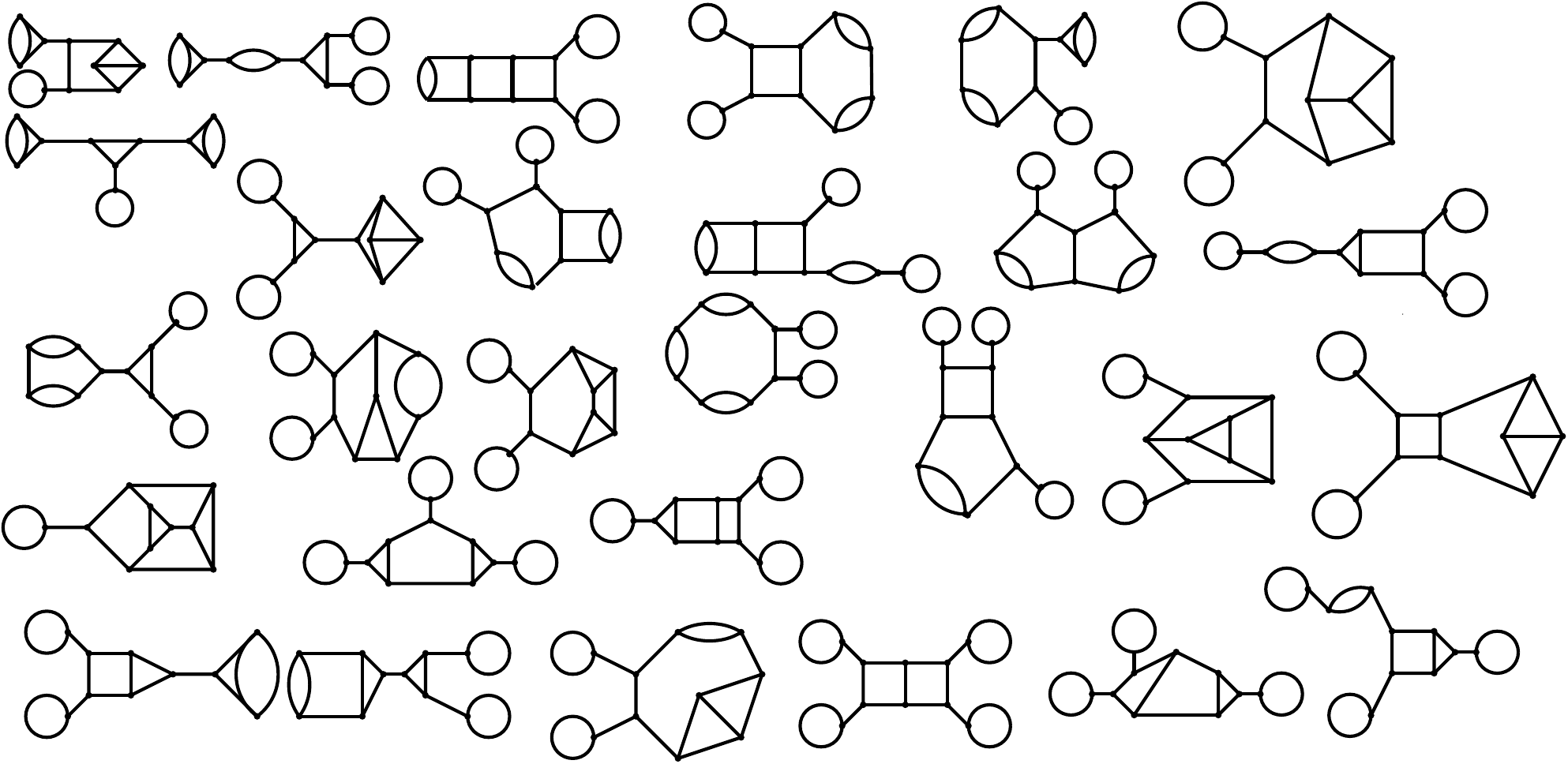}
\caption{All non-troplanar genus 6 graphs which are not ruled by any known  criterion}
\label{figure:genus6_mysteries}
\end{figure}

To get a feel for how the results from earlier in this section rule out non-troplanar graphs, we review some data on graphs of genus $6$, all of which are illustrated in \cite{balaban}.  Of the 388 connected trivalent graphs of genus 6, a total of $152$ are troplanar.  This leaves $236$ non-troplanar  graphs, of which
$34$ are nonplanar.  Of the remaining $202$ planar-but-not-troplanar graphs, $87$ are sprawling and $63$ are crowded (with $5$ both sprawling and crowded).  This gives us $57$ non-troplanar graphs not ruled out by any criterion known prior to this paper.  With our Theorem \ref{theorem:tie}, we can rule out $19$ more; and with Proposition \ref{prop:bridge_deletion}  we rule out another $10$.  This leaves $28$ graphs that are not troplanar, but are not ruled out by any known criterion; these graphs are pictured in Figure \ref{figure:genus6_mysteries}.

We remark that none of the troplanar graphs of genus $6$ have two adjacent vertices where each is also incident to a bridge that is incident to a loop.  If one could prove that a result akin to Corollary \ref{corollary:three_loops} forbidding two loops in a row for graphs with $g\geq 6$, this would rule out an additional $18$ graphs of genus $6$.

\section{An upper bound on the number of troplanar graphs}
\label{sec:upper}

Let $g\geq 2$, and let us consider the proportion of connected trivalent planar graphs of genus $g$ that are troplanar; that is, let us consider  $\mathscr{T}(g)/\mathscr{P}(g)$. From Table \ref{table:graph_numbers}, we have $\mathscr{T}(2)/\mathscr{P}(2)=2/2=1$, $\mathscr{T}(3)/\mathscr{P}(3)=4/5=0.8$, and so on; in other words, $100\%$ of genus $2$ planar graphs are troplanar, and $80\%$ of genus $3$ planar graphs are troplanar. We begin this section by proving that $\mathscr{T}(g)/\mathscr{P}(g)$ tends to $0$ as $g$ tends to infinity.  We could also consider the number of troplanar graphs that are \emph{simple} (that is, have no loops or multiedges) compared to the number of simple connected planar graphs.  Calling these numbers $\mathscr{T}^{\textrm{s}}(g)$ and  $\mathscr{P}^{\textrm{s}}(g)$, we will also prove that $\mathscr{T}^{\textrm{s}}(g)/\mathscr{P}^{\textrm{s}}(g)$ tends to $0$ as $g$ tends to infinity. We will then develop a stronger asymptotic upper bound on $\mathscr{T}(g)$, which will also imply that $\mathscr{T}(g)/\mathscr{P}(g)$ tends to $0$.

\begin{theorem}[Theorem 5 in \cite{randomcubic2}]\label{theorem:random_planar_cubic} Let $H$ be a fixed connected planar graph with one vertex of degree $1$ and each other vertex of degree $3$.  Then there exists $\delta>0$ such that for all $g$, the probability that a random connected trivalent planar graph of genus $g$ contains fewer than $\delta g$ copies of $H$ is $e^{-\Omega(g)}$.
\end{theorem}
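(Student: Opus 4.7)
The plan is to follow the ``appearances-and-pumping'' paradigm that has become standard for random planar graph models (as pioneered by McDiarmid--Steger--Welsh and adapted to the cubic setting in subsequent work). Define an \emph{appearance} of $H$ in a connected trivalent planar graph $G$ to be a subgraph isomorphic to $H$ whose unique degree-$1$ vertex is identified with a vertex $v\in G$ in such a way that the $H$-edge at $v$ is a bridge separating the copy of $H$ from the rest of $G$. Every appearance is in particular a subgraph copy of $H$, so it suffices to show that the number $N(G)$ of appearances satisfies $\Pr[N(G)<\delta g]=e^{-\Omega(g)}$ under the uniform measure on $\mathcal{C}_g$, the set of connected trivalent planar graphs of genus $g$ (equivalently, on $2g-2$ vertices).

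First I would establish a \emph{pumping inequality}: there exist constants $\alpha>0$ and $c>1$ so that for every $0\leq k<\alpha g$,
\[
\#\{G\in\mathcal{C}_g:N(G)=k+1\}\;\geq\;c\cdot \#\{G\in\mathcal{C}_g:N(G)=k\}.
\]
Once this is in hand, iterating from $k=0$ up to $k\approx\alpha g$ shows that the distribution of $N$ is geometrically tilted toward large values, and comparing the mass below $\delta g$ to the total mass yields the exponentially small probability for any $\delta<\alpha$ chosen small relative to $\log c$. The pumping inequality itself is proved by a genus-preserving local substitution: identify a ``template'' structure $T$ (a short bridge neighborhood whose local combinatorics is rigid enough to be recognized, but prevalent enough to occur a linear number of times), excise $T$, and graft in a modified template $T'$ containing one extra appearance of $H$ while having exactly the same vertex and edge count as $T$. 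Each such substitution embeds appearance-$k$ graphs injectively into appearance-$(k+1)$ graphs, and since at least $(\alpha-\varepsilon)g$ template sites are available but only $k$ have been consumed, the ratio is forced to be at least some $c>1$.

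The main obstacle is the quantitative structural step: showing that the chosen template $T$ appears $\Omega(g)$ times in a typical $G\in\mathcal{C}_g$, disjointly from all existing appearances of $H$, and that the swap $T\rightsquigarrow T'$ preserves planarity, connectedness, trivalence, and the genus. The first three conditions can be arranged by taking $T$ and $T'$ to agree on their ``boundary'' with the rest of $G$ (so the embedding extends), while genus-preservation forces $|V(T)|=|V(T')|$ and $|E(T)|=|E(T')|$ and constrains how $H$ can be packaged inside $T'$. Establishing the linear template count, and ensuring that the resulting map is at most $O(g)$-to-one so that the multiplicative ratio survives, is where the bulk of the technical effort goes; this is typically carried out either by a direct generating-function / singularity-analysis argument for the bivariate series tracking $(|V|,N)$, or by a switchings-style second moment in the trivalent planar model, either of which yields the required concentration.
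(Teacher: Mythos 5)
First, a point of order: the paper does not prove this statement at all --- it is imported verbatim as Theorem 5 of the cited reference of Bodirsky, Kang, L\"offler, and McDiarmid, so there is no ``paper's own proof'' to compare against; I am comparing your sketch against the argument in that reference. Your proposal correctly identifies the governing paradigm (appearances of pendant copies, plus a counting/switching argument in the style of McDiarmid--Steger--Welsh), but as written it has a genuine gap, and the gap sits exactly where you place ``the bulk of the technical effort.'' The specific mechanism you propose --- a genus-preserving substitution that excises a template $T$ occurring $\Omega(g)$ times in a typical graph and grafts in a $T'$ of identical vertex and edge count containing an extra appearance --- is circular: the claim that \emph{any} fixed template occurs linearly often in a typical connected cubic planar graph is a statement of precisely the same type as the theorem you are trying to prove, and cannot be assumed. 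Moreover, in the trivalent setting there is no room to repackage: every vertex already has degree $3$, so one cannot attach a pendant copy of $H$ at an existing vertex, and a same-size local rewrite that creates a new appearance of $H$ without destroying structure elsewhere is not something you have exhibited.

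The argument that actually closes is different in an essential way: it compares graphs of \emph{different} sizes rather than pumping within a fixed genus. One subdivides an arbitrary edge of a graph $G'\in\mathcal{C}_{g-g(H)}$ (there are always $\Theta(g)$ edges, so no abundance hypothesis is needed) and joins the new degree-$2$ vertex to the degree-$1$ vertex of $H$, producing a graph in $\mathcal{C}_g$ with one more appearance; connectedness, trivalence, and planarity are preserved by construction, while the genus increases by $g(H)$. The map $(G',e)\mapsto G$ has multiplicity bounded by the number of appearances in $G$, and the double count is closed using the external input that the class of connected cubic planar graphs has a growth constant, i.e.\ that $|\mathcal{C}_g|$ and $|\mathcal{C}_{g-g(H)}|$ differ by a controlled exponential factor (this is Theorem~\ref{theorem:planar_asymptotics} in the present paper, or its precursor in the cited reference). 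Your sketch never invokes this growth-constant input, and without it neither your pumping inequality nor the multiplicity bookkeeping can be completed. The final sentence of your proposal gestures at generating-function or second-moment alternatives, which do exist, but deferring the entire quantitative step to ``where the bulk of the technical effort goes'' means the proposal is a correct roadmap rather than a proof.
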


In fact, a much stronger result is true:  as shown in \cite[]{randomcubicfinal}, the number of copies of $H$ approaches a certain normal distribution as $g$ goes to infinity. We note that these result were stated in terms of $n$, the number of vertices of the graph, rather than in terms of $g$.  However, for a connected trivalent graph with $n$ vertices we have $n=2g-2$, so the result is easily translated into $g$.  We also remark that the original result was stated for simple graphs; it is easily generalized to multigraphs.

\begin{theorem}\label{theorem:most_not_troplanar}
We have $\lim_{g\rightarrow\infty}\frac{\mathscr{T}(g)}{\mathscr{P}(g)}=0$, and $\lim_{g\rightarrow\infty}\frac{\mathscr{T}^{\textrm{s}}(g)}{\mathscr{P}^{\textrm{s}}(g)}=0$.
\end{theorem}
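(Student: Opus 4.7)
The plan is to apply Theorem \ref{theorem:random_planar_cubic} with a carefully constructed auxiliary graph $H$ whose appearance as a subgraph forces the ambient cubic graph $G$ to be sprawling, and hence non-troplanar by Proposition \ref{prop:sprawling}. The structural observation enabling this is that whenever $H$ has a unique degree-$1$ vertex $w$ with all other vertices of degree $3$, and $H$ is a subgraph of a cubic graph $G$, each degree-$3$ vertex of $H$ must have all three of its $G$-edges already in $H$; the only $G$-edges that escape $H$ are the two extra edges incident to $w$.

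Concretely, I would build $H$ with a central vertex $v$ of degree $3$ joined to three vertices $u_1, u_2, u_3$ lying in three disjoint subgraphs $B_1, B_2, B_3$ whose union with $v$ and the connecting edges forms $H$. I take $B_2$ and $B_3$ to each be a copy of $K_4$ with one edge subdivided, with $u_2$ and $u_3$ as the respective subdivision points, so that $u_i$ has degree $2$ in $B_i$ and every other vertex of $B_i$ has degree $3$. The blob $B_1$ must contain both $u_1$ (of degree $2$) and the attachment vertex $w$ (of degree $1$), with all remaining vertices of degree $3$. An elementary parity and degree-sequence argument shows $|B_1|$ must be odd and at least $7$, and the degree sequence $(3,3,3,3,3,2,1)$ is indeed realizable by an explicit simple planar graph on seven vertices. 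The resulting $H$ is then a connected simple planar graph with exactly one degree-$1$ vertex, so Theorem \ref{theorem:random_planar_cubic} applies.

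Now suppose $G \supseteq H$. Each vertex of $B_2$ has degree $3$ in $H$, so all three of its $G$-edges lie in $B_2$; hence $B_2$ is sealed off and becomes a component of $G \setminus v$. The same reasoning applies to $B_3$. The component of $G \setminus v$ containing $B_1$ may absorb the remainder of $G$ through $w$'s two extra edges, but it remains disjoint from $B_2 \cup B_3$. Thus $G \setminus v$ has at least three components, $v$ is a sprawling vertex of $G$, and $G$ is not troplanar.

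Applying Theorem \ref{theorem:random_planar_cubic} in both the simple and multigraph versions (our simple $H$ is a valid subgraph in either setting), a random connected trivalent planar graph of genus $g$ fails to contain a copy of $H$ with probability at most $e^{-\Omega(g)}$. Hence both $\mathscr{T}(g)/\mathscr{P}(g)$ and $\mathscr{T}^{\textrm{s}}(g)/\mathscr{P}^{\textrm{s}}(g)$ are bounded above by $e^{-\Omega(g)}$, and so both tend to $0$. The main technical hurdle is designing $B_1$ to be simple and planar with the required degree sequence; this is the only step that relies on small-case analysis, but it poses no essential difficulty, since one can exhibit $B_1$ explicitly, for instance by taking a triangle on three vertices, attaching a fourth vertex to two of its corners, adding a fifth vertex joined to the third corner, connecting a degree-$2$ vertex $u_1$ to the fourth and fifth vertices, and finally hanging $w$ off the fifth.
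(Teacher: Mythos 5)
Your proof is correct and follows essentially the same route as the paper: both apply Theorem \ref{theorem:random_planar_cubic} to a fixed connected planar graph $H$ with one degree-$1$ vertex and all other vertices trivalent, chosen so that any cubic planar graph containing a copy of $H$ must be sprawling and hence non-troplanar by Proposition \ref{prop:sprawling}; the paper simply exhibits its $H$ in a figure rather than constructing it explicitly as you do. (One inessential quibble: the degree sequence $(3,3,3,2,1)$ is already realizable by a simple planar graph on five vertices, so your claim that $|B_1|$ must be at least $7$ is not actually true, but this is harmless since your explicit seven-vertex $B_1$ works perfectly well.)
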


\begin{proof} We will prove that $\lim_{g\rightarrow\infty}\frac{\mathscr{T}(g)}{\mathscr{P}(g)}=0$; the other result follows from an identical argument, since the graph $H$ below is simple.  Let $H$ be the sprawling graph illustrated in Figure \ref{figure:graph_h}. Any trivalent graph containing a copy of $H$ must also be sprawling, with $v$ serving as a disconnecting vertex. By Theorem \ref{theorem:random_planar_cubic},  there exists $\delta>0$ such that for all $g$, the probability that a random connected trivalent planar graph of genus $g$ contains fewer than $\delta g$ copies of $H$ is $e^{-\Omega(g)}$.  For sufficiently large $g$, we have $\delta g\geq 1$.  This means that the probability a graph does not contain any copy of $H$ goes to $0$ as $g\rightarrow\infty$, which implies that the probability a graph is troplanar goes to $0$ as $g\rightarrow\infty$.  We conclude that $\lim_{g\rightarrow\infty}\frac{\mathscr{T}(g)}{\mathscr{P}(g)}=0$.
\end{proof}

\begin{figure}[hbt]
   		 \centering
		\includegraphics[scale=1]{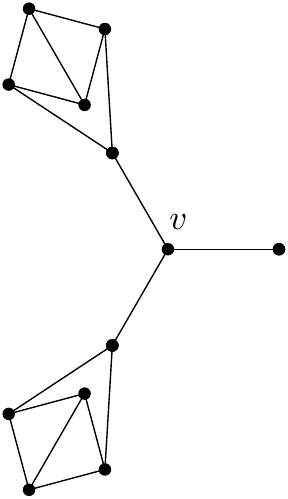}
	\caption{The graph $H$ used in the proof of Theorem \ref{theorem:most_not_troplanar}}
	\label{figure:graph_h}
\end{figure}

It follows from the above argument that most connected trivalent planar graphs (simple or otherwise) are sprawling.  Choosing different $H$'s can similarly show that most such graphs are crowded, and that most such graphs are TIE-fighters, providing alternate proofs that most connected trivalent planar graphs are not tropically planar.  We remark that no graphs resulting from this argument are $2$-edge-connected, and that it is unknown how the $2$-edge-connected counts $\mathscr{T}^{(2)}(g)$ and $\mathscr{P}^{(2)}(g)$ relate to one another.

The next result gives us an idea of how $\mathscr{P}(g)$ and $\mathscr{P}^{s}(g)$ grow with $g$.  Again, we have translated the results from vertices to genus; we have also translated from labelled to unlabelled graphs, using the fact that asymptotically $100\%$ of cubic graphs have no nontrivial symmetries.

\begin{theorem}[Theorems 1 and 4 from \cite{randomcubicfinal}]
Letting $\sim$ denote asymptotic equivalence, we have
$$ \mathscr{P}(g) \sim c(2g-2)^{-\frac{7}{2}} \gamma^{2g-2} $$
where $c\approx 0.104705$ and $\gamma\approx3.985537$; and we have
$$ \mathscr{P}^{\textrm{s}}(g) \sim d(2g-2)^{-\frac{7}{2}} \psi^{2g-2} $$
where $d\approx 0.030487$ and $\psi\approx3.132591$.
\label{theorem:planar_asymptotics}
\end{theorem}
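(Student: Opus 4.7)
The theorem is essentially quoted from \cite{randomcubicfinal}; the work is in setting up two translations, both of which are signposted in the paragraph preceding the statement. The plan is to write a short proof that makes these translations precise and then cites the labelled enumeration directly.

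First, I would handle the translation from number of vertices to genus. Any connected trivalent graph on $n$ vertices has $3n/2$ edges (by the handshake lemma), so its first Betti number satisfies $g = |E| - |V| + 1 = n/2 + 1$, giving $n = 2g-2$. Hence any asymptotic of the form $A \cdot n^{-7/2} \cdot \beta^n$ as $n \to \infty$ translates (through even $n$) into $A \cdot (2g-2)^{-7/2} \cdot \beta^{2g-2}$ as $g \to \infty$, and the family of cubic planar graphs on $n$ vertices is the same family as the connected trivalent planar graphs of genus $g = n/2+1$.

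Second, I would handle the translation from labelled to unlabelled counts. For an unlabelled graph $G$ on $n$ vertices the number of distinct vertex-labellings is $n!/|\mathrm{Aut}(G)|$, so if $L(n)$ and $U(n)$ denote the total labelled and unlabelled counts then $L(n) = \sum_G n!/|\mathrm{Aut}(G)|$. The key input, which I would cite from \cite{randomcubicfinal} (and which is a standard contiguity fact about random cubic planar graphs), is that the proportion of such graphs with trivial automorphism group tends to $1$. Consequently $L(n) \sim n! \cdot U(n)$, so that $U(n) \sim L(n)/n!$.

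Third, the labelled asymptotics established in \cite{randomcubicfinal} have the form $L(n) \sim C \cdot n! \cdot n^{-7/2} \cdot \beta^n$ (the factor $n!$ is standard for labelled structures governed by exponential generating functions). Dividing by $n!$ and then substituting $n = 2g-2$ yields exactly the claimed formulas, with constants $c$ and $\gamma$ for the multigraph case and $d$ and $\psi$ for the simple case inherited from the cited enumeration. The only step that might be viewed as nontrivial is verifying that the ``almost all are asymmetric'' statement applies cleanly to the specific classes at hand (general cubic planar versus simple cubic planar), but this is explicitly part of the same body of results in \cite{randomcubicfinal,randomcubic2}, so no additional work is required beyond assembling the pieces.
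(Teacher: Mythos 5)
Your proposal is correct and matches the paper's treatment: the paper gives no formal proof, only the two translations you make precise (substituting $n=2g-2$ for connected trivalent graphs, and passing from labelled to unlabelled counts via the fact that asymptotically $100\%$ of cubic planar graphs are asymmetric), with the asymptotic formulas themselves imported from \cite{randomcubicfinal}.
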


Combined with Theorem \ref{theorem:most_not_troplanar}, this immediately implies the following.

\begin{cor}\label{cor:bound}
We have $\mathscr{T}(g)=o((2g-2)^{-7/2}\xi^g)$, where $\xi=\gamma^2\approx 15.88451$; and $\mathscr{T}^{\textrm{s}}(g)=o((2g-2)^{-7/2}\zeta^g)$, where $\zeta=\psi^2\approx 9.813126$.
\end{cor}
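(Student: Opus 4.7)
The plan is to derive this as a direct consequence of the two preceding results, with the only real content being the bookkeeping needed to convert the exponential base $\gamma^{2g-2}$ appearing in Theorem \ref{theorem:planar_asymptotics} into a base of the form $\xi^g$ (and similarly for $\psi^{2g-2}$ versus $\zeta^g$).

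First I would observe the identity $\gamma^{2g-2} = \gamma^{-2}(\gamma^2)^g = \gamma^{-2}\xi^g$, so Theorem \ref{theorem:planar_asymptotics} yields
\[
\mathscr{P}(g) \sim c\gamma^{-2}(2g-2)^{-7/2}\xi^g.
\]
In particular there exists a constant $C > 0$ and some $g_0$ such that $\mathscr{P}(g) \le C(2g-2)^{-7/2}\xi^g$ for all $g \ge g_0$; that is, $\mathscr{P}(g) = O((2g-2)^{-7/2}\xi^g)$.

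Next I would factor $\mathscr{T}(g) = \tfrac{\mathscr{T}(g)}{\mathscr{P}(g)} \cdot \mathscr{P}(g)$ (which is valid since $\mathscr{P}(g) > 0$ for all $g \ge 2$). By Theorem \ref{theorem:most_not_troplanar} the first factor tends to $0$ as $g \to \infty$, so for any $\varepsilon > 0$ we can choose $g_1 \ge g_0$ such that $\mathscr{T}(g)/\mathscr{P}(g) < \varepsilon/C$ for all $g \ge g_1$. Combining with the upper bound on $\mathscr{P}(g)$ gives $\mathscr{T}(g) < \varepsilon(2g-2)^{-7/2}\xi^g$ for all $g \ge g_1$, which is exactly the statement $\mathscr{T}(g) = o((2g-2)^{-7/2}\xi^g)$.

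The proof of the simple-graph bound is identical: using $\psi^{2g-2} = \psi^{-2}\zeta^g$ we get $\mathscr{P}^{\textrm{s}}(g) = O((2g-2)^{-7/2}\zeta^g)$, and then Theorem \ref{theorem:most_not_troplanar} (whose second conclusion is precisely $\mathscr{T}^{\textrm{s}}(g)/\mathscr{P}^{\textrm{s}}(g) \to 0$) provides the vanishing factor. There is no real obstacle here; the only thing to be careful about is to invoke the little-$o$ definition correctly and not mistakenly claim something stronger like an asymptotic equivalence, since Theorem \ref{theorem:most_not_troplanar} supplies no quantitative rate for the convergence to $0$.
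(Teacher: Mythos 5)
Your argument is correct and is exactly the one the paper intends: the paper derives this corollary ``immediately'' by combining Theorem \ref{theorem:most_not_troplanar} with Theorem \ref{theorem:planar_asymptotics}, which is precisely your factorization $\mathscr{T}(g) = \frac{\mathscr{T}(g)}{\mathscr{P}(g)}\cdot\mathscr{P}(g)$ together with the rewriting $\gamma^{2g-2}=\gamma^{-2}\xi^g$. Your careful handling of the little-$o$ definition fills in the bookkeeping the paper leaves implicit.
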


We spend the remainder of the section proving a stronger asymptotic bound on $\mathscr{T}(g)$.  Our main strategy is to stratify $\mathscr{T}(g)$ by separating out the graphs coming from polygons of different lattice widths.  Let $\mathscr{T}^{\textrm{lw}}_\ell(g)$ denote the number of troplanar graphs arising from polygons of genus $g$ with lattice width exactly $\ell$, and let $\mathscr{T}^{\textrm{lw}}_{\geq \ell}(g)$ denote the number of troplanar graphs arising from polygons of genus $g$ with lattice width at least $\ell$. Since all polygons of genus $g\geq 2$ have lattice width at least $2$, we have $\mathscr{T}(g)\leq \mathscr{T}^{\textrm{lw}}_{2}(g)+\mathscr{T}^{\textrm{lw}}_{3}(g)+\mathscr{T}^{\textrm{lw}}_{\geq 4}(g)$, where the inequality comes from the fact one graph may come from multiple polygons.  We will separately bound each of these three summands, thus giving an overall bound on $\mathscr{T}(g)$.

The easiest term to handle is $\mathscr{T}^{\textrm{lw}}_2(g)$.  If a polygon has lattice width $2$ and genus at least $2$, it must have a one-dimensional interior polygon and thus be hyperelliptic.  Due to the work of \cite{chan}, \cite[\S 5]{BJMS}, and \cite{morrisonhyp}, the case for hyperelliptic polygons is understood quite well: for each genus $g$ there are $2^{g-2}+2^{\lfloor (g-2)/2\rfloor}$ distinct skeletons that arise from some hyperelliptic polygon of genus $g$; see Section \ref{sec:lower} for more details.  Thus we have an explicit formula for $\mathscr{T}^{\textrm{lw}}_2(g)$.

We now move on to bounding $\mathscr{T}^{\textrm{lw}}_{3}(g)$.  Our starting point for this is the following result, which considers a single polygon of lattice width $3$.

\begin{prop}  A nonhyperelliptic polygon of lattice width $3$ gives rise to $O(8^g/\sqrt{g})$ troplanar graphs.
\label{prop:width_3}
\end{prop}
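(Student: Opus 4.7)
The plan is to bound the number of unimodular triangulations of $\Delta$; since every troplanar graph arising from $\Delta$ is the skeleton of some tropical curve dual to such a triangulation, a bound on triangulations immediately gives a bound on the number of graphs.

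After a lattice-preserving affine transformation we may assume $\Delta\subset\mathbb{R}\times[0,3]$. All $g$ interior lattice points then lie in the two rows $y=1$ and $y=2$, and because $\Delta$ is nonhyperelliptic, both rows contain at least one interior point. Let $a$ and $b$ be the respective counts in these rows, so $a+b=g$; and note that the $x$-extent of $\Delta$ is of order $g$ (by Pick's theorem, since area is linear in genus for a fixed strip height).

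I would then enumerate unimodular triangulations $\mathcal{T}$ of $\Delta$ by decomposing $\Delta$ into three horizontal bands of height $1$, between rows $y=i$ and $y=i+1$ for $i=0,1,2$, and recording $\mathcal{T}$ band-by-band. Within each band $\mathcal{T}$ is essentially a sequence of diagonal choices in unit squares, together with forced triangulations of a bounded number of trapezoidal or sliver regions along the slanted edges of $\Delta$. For the two outer bands this yields a combined multiplicative factor of at most $4^g$ (two binary choices per interior row-$1$ or row-$2$ lattice point, from the two squares each such point bounds). For the middle band, the configuration corresponds to a non-crossing matching (equivalently, a monotone lattice path) between the row-$1$ and row-$2$ points and contributes at most $\binom{a+b}{a}=\binom{g}{a}$ options.

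Combining these factors, the total number of unimodular triangulations is at most on the order of $\binom{g}{a}\cdot 4^g$; maximizing over $a$ and applying Stirling's approximation $\binom{g}{\lfloor g/2\rfloor}\sim 2^g/\sqrt{\pi g/2}$ yields $O(4^g\cdot 2^g/\sqrt{g})=O(8^g/\sqrt{g})$, as desired. The main obstacle will be handling primitive edges of $\mathcal{T}$ with vertical displacement $\Delta y\in\{2,3\}$, which can in principle occur because the strip has height $3$ and which do not fit the naive band-by-band description. My plan to address this is to use convexity of $\Delta$ together with the requirement that such an edge be flanked by unimodular triangles on both sides: these constraints force any such long edge to have small horizontal displacement and to determine the triangulation rigidly in its vicinity, so that long edges appear in only a bounded number of ways per column of $\Delta$ and contribute at most a constant multiplicative factor absorbed into the $O(\cdot)$.
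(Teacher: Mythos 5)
Your overall strategy differs from the paper's in a way that creates a genuine gap: you propose to bound the number of unimodular triangulations of $\Delta$ and use that as a bound on the number of graphs, whereas the paper never bounds triangulations at all --- it bounds graphs directly, using the observation that a $2$-edge-connected troplanar graph arising from $\Delta$ is completely determined by the edges of $\mathcal{T}$ joining interior lattice points. Those edges form a unimodular triangulation of the height-one trapezoid $\Delta^{(1)}$ together with a choice of which boundary edges of $\Delta^{(1)}$ to include, giving roughly $\binom{g-2}{a}\cdot 2^g=O(4^g/\sqrt{g})$ possibilities, and a final factor $2^{g-1}$ accounts for reattaching bridges. This collapse of many triangulations onto one graph is essential, because the triangulation count itself appears to exceed $c^g$ for some $c>8$: already for the height-two strip $[0,n]\times[0,2]$ the number of unimodular triangulations exceeds the ``band-decomposable'' count $\binom{2n}{n}^2$ by a ratio that grows steadily with $n$ (it is $64$ versus $36$ at $n=2$, $852$ versus $400$ at $n=3$, $12170$ versus $4900$ at $n=4$, and so on), which is exactly the phenomenon your band-by-band accounting assumes away; stacking a height-two strip on a height-one strip then inflates the height-three count past $64$ per column, i.e.\ past $8^g$. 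So even a fully rigorous version of your count would not deliver $O(8^g/\sqrt{g})$.

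The specific step you flag as the ``main obstacle'' is also resolved incorrectly. It is false that convexity plus unimodularity force an edge of vertical displacement $2$ to have small horizontal displacement: the segment from $(0,0)$ to $(2k+1,2)$ is primitive for every $k$, and the triangles it forms with $(k,1)$ and with $(k+1,1)$ both have area $\tfrac12$ and lie on opposite sides of it, so such an edge of arbitrarily large horizontal displacement occurs in unimodular triangulations of, say, the $3\times n$ rectangle (a nonhyperelliptic lattice-width-$3$ polygon). Consequently long edges are neither rare nor locally rigid in the way you need, and the claim that they contribute only a bounded multiplicative factor is unsupported and, per the height-two data above, almost certainly false. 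To repair the argument you would need to switch from counting triangulations to counting the data that actually determines the graph, which is the route the paper takes.
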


\begin{proof}
Let $\Delta$ be a nonhyperelliptic polygon of lattice width $3$.   First we note that $\Delta$ cannot be a standard triangle, as the only standard triangle of lattice width $3$  has genus $1$ and is thus hyperelliptic. It follows from \cite[Theorem 4]{castryckcools} that $\textrm{lw}(\Delta^{(1)})=\textrm{lw}(\Delta)-2=1$.  This means that $\Delta^{(1)}$ has no interior lattice points, which combined with its lattice width implies that up to equivalence, $\Delta^{(1)}$ must be a right trapezoid of height one, with vertices at $(0,0)$, $(0,1)$, $(a,0)$, and $(1,b)$, where $a\geq b\geq 1$ and $a+b+2=g$ \cite{Koelman, Castryck2012}.

To show that $\Delta$ gives rise to $O(8^g/\sqrt{g})$ graphs, we will first show that it gives rise to $O(4^g/\sqrt{g})$ $2$-edge-connected troplanar graphs of genus $g$.  A key fact is that if a triangulation $\mathcal{T}$ gives rise to a a $2$-edge-connected troplanar graph $G$, then $G$ is determined by a very small amount of information in $\mathcal{T}$, namely by the edges connecting the interior lattice points of $\Delta$.  Every interior lattice point of $\Delta$ in must contribute a bounded face to the troplanar graph; since there are no bridges, for each pair of the $g$ bounded faces of the graph, they either share a common edge or are non-adjacent. Since two bounded faces share an edge if and only an edge connects the corresponding interior points in the triangulation, we can draw the 2-edge-connected troplanar graph with only this limited information.

Let $\mathcal{T}$ be a regular unimodular triangulation of $\Delta$ giving rise to a $2$-edge-connected troplanar graph $G$.  Let $\mathcal{T}'$ consist of all edges in $\mathcal{T}$ with both endpoints at interior lattice points of $\Delta$. Letting $\partial\left(\Delta^{(1)}\right)$ denote the boundary of $\Delta^{(1)}$, we  claim that $\mathcal{T}'\cup \partial\left(\Delta^{(1)}\right)$ is a unimodular triangulation of $\Delta^{(1)}$, as illustrated in Figure \ref{figure:width3}. Certainly it is a subdivison: any choice of non-crossing edges between the top and bottom rows of a polygon of lattice width $1$ gives a subdivision.

\begin{figure}[hbt]
   		 \centering
		\includegraphics[scale=1]{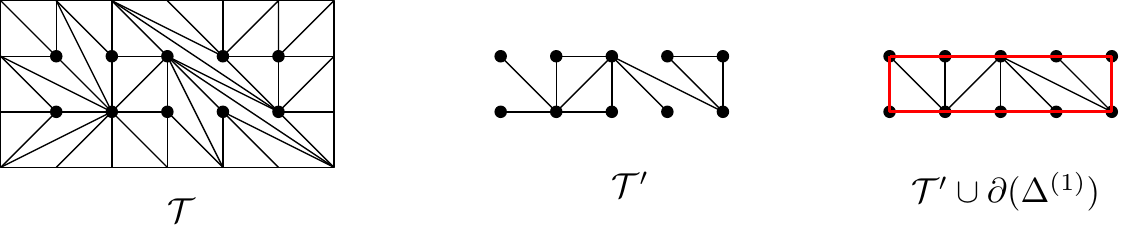}
	\caption{A triangulation $\mathcal{T}$ of a lattice width $3$ polygon $\Delta$; the subset $\mathcal{T}'$; and the resulting triangulation $\mathcal{T}'\cup \partial\left(\Delta^{(1)}\right)$ of $\Delta^{(1)}$}
	\label{figure:width3}
\end{figure}

Suppose for the sake of contradiction that $\mathcal{T}'\cup \partial\left(\Delta^{(1)}\right)$ is not a unimodular triangulation.  Then there is a subpolygon $\Sigma$ of $\Delta^{(1)}$ present in $\mathcal{T}'\cup \partial\left(\Delta^{(1)}\right)$ that is not a unimodular triangle.  
The polygon $\Sigma$ must have two adjacent boundary points $p$ and $q$ such that the edge $pq$ is not present in $\mathcal{T}$; otherwise $\Sigma$ would be a closed face in $\mathcal{T}$, and $\mathcal{T}$ would not be a unimodular triangulation. Note that all edges in $\partial(\Sigma)\setminus \partial(\Delta^{(1)})$ are present in $\mathcal{T}$, so $pq\in\partial(\Delta^{(1)})$.  If $p$ and $q$ are in different rows, then either $\{p,q\}=\{(0,0),(0,1)\}$ or $\{p,q\}=\{(a,0),(b,1)\}$. In the first case, $\mathcal{T}$ must have either an edge from $(1,0)$ to $(-1,1)$ or from $(1,1)$ to $(-1,0)$, which then yields either an edge from $(1,0)$ to $(0,1)$ or from $(1,1)$ to $(0,0)$; this means that $\Sigma$ is a unimodular triangle, a contradiction.  A similar contradiction arises if $\{p,q\}=\{(a,0),(b,1)\}$. Thus $p$ and $q$ must be in the same row.

Because $pq$ is not present in $\mathcal{T}$, some $e$ in $\mathcal{T}\setminus\mathcal{T}'$ must pass through $pq$.  It cannot be a nontrivial split, since $G$ is $2$-edge-connected, so $e$ must have endpoints $r$ and $s$ where $r$ is an interior lattice point and $s$ is a boundary lattice point, as illustrated in Figure \ref{figure:must_be_split}. Note that $r$ must be on the opposite row from $p$ and $q$. At this point no edge in $\mathcal{T}$ can separate $p$ from $r$:  such an edge would have to cross $e$.  Thus $pr$ is an edge in $\mathcal{T}$.  Similarly, $qr$ is an edge in $\mathcal{T}$.  It follows that the triangle $pqr$ appears in the subdivision $\mathcal{T}'\cup\partial(\Delta^{(1)})$ of $\Delta^{(1)}$.  By Pick's Theorem, $pqr$ is unimodular.  Since $p$ and $q$ are on the same row, they can only be common vertices of one polygon in the subdivision of $\Delta^{(1)}$, so $\Sigma$ must be this unimodular triangle, a contradiction.  We conclude that $\mathcal{T}'\cup \partial\left(\Delta^{(1)}\right)$ is a unimodular triangulation.

\begin{figure}[hbt]
   		 \centering
		\includegraphics[scale=1]{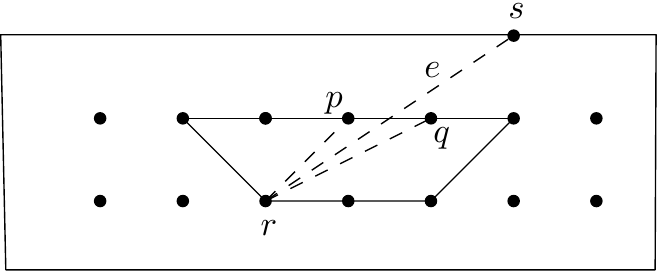}
	\caption{The relative positions of $p$, $q$, $r$, and $s$.  The polygon $\Sigma$ is solid, and necessary edges in $\mathcal{T}$ are dashed.}
	\label{figure:must_be_split}
\end{figure}

Ignoring for a moment whether $(0,0)$ connects to $(0,1)$ and whether $(a,0)$ connects to $(b,1)$ in $\mathcal{T}$, this means that the number of ways the upper and lower lattice points of $\Delta^{(1)}$ can connect to one another in $\mathcal{T}$ is equal to the number of unimodular triangulations of $\Delta^{(1)}$. 
By \cite[\S 2]{countinglattice}, $\Delta^{(1)}$ has $\binom{a+b}{a}$ unimodular triangulations.  
We have $a+b=g-2$, so $\binom{a+b}{a}=\binom{g-2}{a}\leq\binom{g-2}{\lfloor(g-2)/2\rfloor}$.  
This central binomial coefficient is asymptotic to $\frac{2^{g-2}}{\sqrt{\pi(g-2)/2}}$.

The only remaining data regarding the connections between the interior lattice points is which edges from the boundary of $\Delta^{(1)}$ are present in the triangulation $\mathcal{T}$.  Since $\Delta^{(1)}$ has $g$ boundary points, it also has $g$ boundary edges, so there are $2^{g}$ ways to choose which are included and which are not.  Multiplying this by our binomial coefficient bound, we find that the number of ways the interior lattice points of $\Delta$ could be connected in order to yield a $2$-edge-connected troplanar graph is $O(4^g/\sqrt{g})$.

By the same argument from Corollary \ref{corollary:two-connected}, the total number of troplanar graphs arising from $\Delta$ is at most $2^{g-1}$ times the number of $2$-edge-connected graphs arising from $\Delta$.  We conclude the number of troplanar graphs arising from $\Delta$ is $O(8^g/\sqrt{g})$.
\end{proof}

\begin{cor}\label{corollary:width_3}  We have $\mathscr{T}^{\textrm{lw}}_3(g)=O(8^g\cdot \sqrt{g})$.
\end{cor}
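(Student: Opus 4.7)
The plan is to combine Proposition \ref{prop:width_3} with a count of the maximal polygons of lattice width $3$ and genus $g$, invoking the observation at the end of Section \ref{sec:background} that every troplanar graph can be realized from a maximal polygon with the same interior. Since the bound per polygon is $O(8^g/\sqrt{g})$ and the target bound is $O(8^g\sqrt{g})$, I need to show that there are at most $O(g)$ relevant maximal polygons, modulo a negligible hyperelliptic remainder.

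First I would split into hyperelliptic and nonhyperelliptic cases. For the hyperelliptic contribution (across all lattice widths, not just $3$), the formula cited from \cite{chan, BJMS, morrisonhyp} gives a total of $2^{g-2}+2^{\lfloor(g-2)/2\rfloor} = O(2^g)$ distinct skeletons, which is $o(8^g\sqrt{g})$ and may be discarded. For the nonhyperelliptic polygons $\Delta$ of lattice width $3$, the BJMS reduction lets me restrict to the maximal ones; each such $\Delta$ is determined uniquely by its interior polygon $\Delta^{(1)}$. By \cite[Theorem 4]{castryckcools}, $\textrm{lw}(\Delta^{(1)})=\textrm{lw}(\Delta)-2=1$, so $\Delta^{(1)}$ is (up to equivalence) a right trapezoid with vertices $(0,0)$, $(0,1)$, $(a,0)$, $(b,1)$ for some integers $a\geq b\geq 1$ with $a+b=g-2$. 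The number of such trapezoids is $\lfloor (g-2)/2\rfloor = O(g)$.

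Applying Proposition \ref{prop:width_3} to each of these $O(g)$ polygons gives
\[
\mathscr{T}^{\textrm{lw}}_3(g) \leq O(g)\cdot O(8^g/\sqrt{g}) + O(2^g) = O(8^g\sqrt{g}),
\]
as claimed.

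I do not expect any serious obstacle, as the ingredients are already in place from the preceding proposition and from Section \ref{sec:background}. The only point requiring care is the verification that the BJMS reduction to a maximal polygon keeps us within the lattice width $3$ stratum: this holds in the nonhyperelliptic case because the reduction preserves the interior polygon $\Delta^{(1)}$, and Castryck--Cools then pins down $\textrm{lw}(\Delta)=\textrm{lw}(\Delta^{(1)})+2$ on both sides. Hyperelliptic polygons of lattice width $3$ are not covered by Proposition \ref{prop:width_3} and so must be handled separately, but their contribution is subsumed into the hyperelliptic total above.
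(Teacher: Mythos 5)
Your proposal is correct and follows essentially the same route as the paper: enumerate the $O(g)$ right trapezoids of height $1$ with $g$ lattice points that can serve as $\Delta^{(1)}$, note each has at most one maximal polygon, and multiply by the $O(8^g/\sqrt{g})$ bound from Proposition \ref{prop:width_3}. Your separate treatment of a hyperelliptic contribution is harmless but unnecessary, since every hyperelliptic polygon of genus at least $2$ has lattice width exactly $2$ and so contributes nothing to $\mathscr{T}^{\textrm{lw}}_3(g)$.
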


\begin{proof}
As already noted, every nonhyperelliptic  polygon of lattice width $3$ and genus $g$ has an interior polygon equal to a right trapezoid of height $1$ with $g$ lattice points.  There are $O(g)$ such possible interior polygons, and each has at most one associated maximal polygon.  Every one of these maximal polygons contributes  $O(8^g/\sqrt{g})$ troplanar graphs, so in total we have $\mathscr{T}^{\textrm{lw}}_3(g)=O(8^g\cdot \sqrt{g})$.
\end{proof}

We now need to bound $\mathscr{T}^{\textrm{lw}}_{\geq 4}(g)$.  It turns out that our argument will hold in general for $\mathscr{T}^{\textrm{lw}}_{\geq \ell+1}(g)$ for any $\ell\geq 3$, so we work in that generality before specializing to $\ell=3$ to obtain our final result.  We will find a bound for the number of all lattice polygons of genus $g$; then we will find a bound for how many unimodular triangulations a polygon of lattice width $\ell+1$ (or more) can have.  Multiplying these together will give an upper bound on $\mathscr{T}^{\textrm{lw}}_{\geq \ell+1}(g)$.

For bounding the number of polygons, our starting point is the following result, which bounds the number of polygons with at most a fixed area. The upper bound comes from \cite{bv}, the lower bound from \cite{arnold}.

\begin{theorem}\label{theorem:area}
Let $A$ be a fixed rational number, and let $N(A)$ be the number of convex lattice polygons distinct under lattice transformations with area less than $A$. Then there exist two positive constants $C_1$, $C_2$ such that
$$ C_1A^{1/3} \leq \log( N(A)) \leq C_2A^{1/3}$$
\end{theorem}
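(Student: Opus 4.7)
The plan is to establish the upper and lower bounds separately, since they rest on genuinely different combinatorial-geometric phenomena. The upper bound due to B\'ar\'any--Vershik depends crucially on a classical vertex estimate of Andrews: any convex lattice polygon of area at most $A$ has at most $O(A^{1/3})$ vertices. I would use lattice equivalence to normalize each polygon (say, placing the lexicographically lowest vertex at the origin and orienting the shortest edge canonically), then encode the polygon by the cyclic sequence of primitive edge vectors read counterclockwise around its boundary. Since a polygon of area $A$ lies, up to equivalence, in a bounding box of side $O(\sqrt{A})$ (from standard lattice-width versus area inequalities), the primitive edge vectors are confined to a disk of radius $O(\sqrt{A})$. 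Combining Andrews' bound on the number of distinct primitive directions with an entropy count over multisets of such vectors whose signed sum is zero (and whose associated polygon has area $\leq A$) yields $\log N(A) \leq C_2 A^{1/3}$.

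For the lower bound due to Arnold, the strategy is an explicit construction. I would fix a set $\mathcal{S}$ of $\Theta(A^{1/3})$ primitive vectors of comparable norm (on the order of $A^{1/6}$), chosen so that $\mathcal{S} = -\mathcal{S}$ and so that $\mathcal{S}$ contains vectors in a prescribed angular progression; the count $\Theta(A^{1/3})$ comes from the fact that the number of primitive vectors of norm at most $r$ is $\Theta(r^2)$, so selecting vectors of norm in a window around $A^{1/6}$ gives the desired cardinality. For each subset $T \subseteq \mathcal{S}_+$ of a chosen half, form the polygon whose counterclockwise boundary uses each $v \in T$ together with each $-v$; the closure condition is automatic, and one checks the area is $O(A)$ by area-bounding a polygon whose edge vectors are known. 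This yields $2^{\Theta(A^{1/3})}$ lattice polygons, giving $\log N(A) \geq C_1 A^{1/3}$ once pairwise inequivalence is verified.

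The main obstacle in each direction is different. For the upper bound, the hard input is Andrews' vertex estimate itself, whose proof involves a delicate analysis of lattice points on the boundary of a convex body and is by no means elementary. For the lower bound, the subtle point is ruling out coincidences under $\mathrm{PSL}_2(\mathbb{Z})$: one must argue that two distinct subsets $T, T' \subseteq \mathcal{S}_+$ cannot yield polygons related by a lattice transformation. This is handled by observing that the cyclically ordered multiset of primitive edge directions is an $\mathrm{SL}_2(\mathbb{Z})$-equivariant invariant of the polygon, so generic choices of $\mathcal{S}$ (e.g., ensuring no linear image of a subset of $\mathcal{S}$ equals another subset of $\mathcal{S}$) force different $T$ to produce inequivalent polygons, with at most a polynomial-in-$A$ overcounting that is absorbed into the constant.

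Since the statement is attributed to \cite{bv} and \cite{arnold}, I would present this sketch only to convey the essential idea that the exponent $1/3$ matches Andrews' vertex bound, and refer the reader to the original papers for the technical details. The appearance of the $1/3$ on both sides is what makes the theorem tight up to constants, and this matching is the conceptual heart of the result.
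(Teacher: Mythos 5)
This statement is quoted in the paper without proof: the upper bound is attributed to \cite{bv} and the lower bound to \cite{arnold}, and no argument is given in the text. Your decision to sketch the ideas and then defer to the original sources is therefore exactly what the paper does, and your lower-bound sketch is essentially Arnold's construction: taking $\Theta(A^{1/3})$ primitive vectors of norm about $A^{1/6}$ (there are $\Theta(r^2)$ primitive vectors of norm at most $r$), forming centrally symmetric polygons from subsets, and absorbing the polynomial overcounting caused by lattice equivalence into the constant is correct in all essentials.

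The one place your sketch would not deliver the bound as stated is the upper bound. Andrews' theorem bounds the number of \emph{vertices} of a single convex lattice polygon of area $A$ by $O(A^{1/3})$; it says nothing about the number of available primitive directions, which in a disk of radius $O(\sqrt{A})$ is $\Theta(A)$. An entropy count of the kind you describe --- choose $O(A^{1/3})$ edge vectors from a pool of $\Theta(A)$ primitive vectors with signed sum zero --- gives roughly $\binom{\Theta(A)}{O(A^{1/3})} = \exp\bigl(O(A^{1/3}\log A)\bigr)$, which is Arnold's original upper bound, not the log-free bound in the statement. Removing the logarithm is precisely the contribution of \cite{bv}, and it requires a finer count: one must exploit that the edge vectors of a polygon of area $A$ are short on average (the total lattice length of the boundary, not just the number of edges, is controlled), so that almost all edges are drawn from the much smaller pool of $\Theta(A^{1/3})$ primitive vectors of norm $O(A^{1/6})$. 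If you keep the sketch, either state the weaker $O(A^{1/3}\log A)$ conclusion as what your argument actually yields, or flag explicitly that the elimination of the logarithm needs the B\'ar\'any--Vershik refinement.
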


To convert this result from bounded area to  bounded genus, we need to determine how many lattice points are possible in a polygon of genus $g$, and then apply Theorem \ref{theorem:picks}. We will use the following fact from derived in \cite{Castryck2012}, presented as Equation (2) following the proof of their Theorem 1.

\begin{prop}\label{nonhyp}
Let $\Delta$ be a nonhyperelliptic lattice polygon with $v$ vertices and with $r$ lattice boundary points, and let $r^{(1)}$ be the number of lattice boundary boundary points of $\Delta^{(1)}$. Then

$$r\leq r^{(1)}+12-v.$$
\end{prop}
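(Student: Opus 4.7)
The plan is to introduce the \emph{adjoint polygon} $\Delta^{\mathrm{ad}} := \bigcap_i \{x \in \RR^2 : \vec{n}_i \cdot x \geq c_i + 1\}$, obtained from $\Delta$ by translating every edge inward by one lattice unit (here $\vec{n}_i$ is the primitive inward normal to the $i$-th edge, whose supporting line is $\vec{n}_i \cdot x = c_i$). The lattice points of $\Delta^{\mathrm{ad}}$ are exactly the interior lattice points of $\Delta$, so $\Delta^{(1)} \subseteq \Delta^{\mathrm{ad}}$, and since $\Delta$ is nonhyperelliptic the polygon $\Delta^{\mathrm{ad}}$ is two-dimensional. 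Writing $r^{\mathrm{ad}}$ for the number of lattice points on $\partial \Delta^{\mathrm{ad}}$, the strategy is to prove the two inequalities $r \leq r^{\mathrm{ad}} + 12 - v$ and $r^{\mathrm{ad}} \leq r^{(1)}$, whose combination gives the claim.

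The second of these is a direct observation: if $p \in \ZZ^2$ lies on $\partial \Delta^{\mathrm{ad}}$, it lies on some supporting line $L$ of $\Delta^{\mathrm{ad}}$, and since $\Delta^{(1)} \subseteq \Delta^{\mathrm{ad}}$ this same $L$ supports $\Delta^{(1)}$; because $p$ is a lattice point of $\Delta^{\mathrm{ad}}$ it belongs to $\Delta^{(1)}$, and lying on the support line $L$ forces it onto $\partial \Delta^{(1)}$. For the substantive first inequality, the natural approach is via the toric surface $X_\Delta$: the polygons $\Delta$ and $\Delta^{\mathrm{ad}}$ correspond to the Weil divisors $D$ and $D + K_{X_\Delta}$, and for any torus-invariant $\QQ$-divisor $M$ with nonempty polygon, the number of boundary lattice points of that polygon equals $-M \cdot K_{X_\Delta}$. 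Applying this to $D$ and $D + K_{X_\Delta}$ gives $r - r^{\mathrm{ad}} = -D \cdot K_{X_\Delta} - (-(D + K_{X_\Delta}) \cdot K_{X_\Delta}) = K_{X_\Delta}^2$, reducing the goal to the classical inequality $K_{X_\Delta}^2 \leq 12 - v$. When $X_\Delta$ is smooth this is an equality, namely Noether's formula $K^2 + c_2 = 12$ combined with $c_2(X_\Delta) = v$ (the torus-fixed points).

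The main obstacle is controlling $K_{X_\Delta}^2 \leq 12 - v$ when $\Delta$ has non-smooth vertices, i.e., vertices where the two primitive outgoing edge vectors fail to span $\ZZ^2$, as then $X_\Delta$ carries cyclic-quotient singularities and Noether's formula receives singularity corrections. I would handle this by passing to a toric resolution $\pi \colon X' \to X_\Delta$ in which each singular vertex is replaced by a chain of smooth vertices corresponding to the Hirzebruch--Jung continued fraction of the singularity; each such replacement increases $v$ by a positive integer while decreasing $K^2$ by at least that same integer, so the inequality $K^2 + v \leq 12$ is inherited from the smooth $X'$ back to $X_\Delta$. A purely combinatorial alternative is available by computing $K^2$ directly as a sum of toric boundary self-intersections from the fan data, but either route is the technical heart of the argument while the remaining steps are essentially formal.
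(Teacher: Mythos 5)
The paper does not actually prove this proposition; it is imported verbatim from \cite{Castryck2012} (Equation (2) following the proof of Theorem 1 there), so you are attempting a proof the authors themselves outsource. Unfortunately your reduction fails at its very first step: the inequality $r \leq r^{\mathrm{ad}} + 12 - v$ is false. Take $\Delta = \conv\{(0,0),(15,0),(0,3)\}$, a nonhyperelliptic polygon of genus $13$ with $v = 3$ and $r = 21$. Its adjoint polygon is the \emph{rational} triangle $\{x\geq 1\}\cap\{y\geq 1\}\cap\{x+5y\leq 14\}$ with vertices $(1,1)$, $(9,1)$, $(1,13/5)$, whose boundary contains only the $11$ lattice points $(1,1),\dots,(9,1)$, $(1,2)$, $(4,2)$; hence $r^{\mathrm{ad}} + 12 - v = 20 < 21 = r$. (The proposition itself is fine here: $\Delta^{(1)}$ is the trapezoid with vertices $(1,1),(9,1),(4,2),(1,2)$, all $13$ interior lattice points lie on its boundary, and $21 \leq 13 + 9$.) The defect is that $\Delta^{(1)}$ may have boundary lattice points --- here $(2,2)$ and $(3,2)$ --- lying in the \emph{interior} of $\Delta^{\mathrm{ad}}$; your observation $r^{\mathrm{ad}}\leq r^{(1)}$ is correct but points the wrong way for your decomposition, since it discards exactly the lattice points the statement needs.

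The toric half breaks down at the same example for the same underlying reason. Here $X_\Delta\cong\mathbb{P}(1,1,5)$, so $K^2 = 49/5 > 9 = 12 - v$: the inequality $K^2\leq 12-v$ is simply false for singular toric surfaces. Your resolution argument in fact runs in the wrong direction: writing $K_{X'}=\pi^*K_{X_\Delta}+\sum_i a_iE_i$ with discrepancies $a_i\in(-1,0]$ and negative definite exceptional intersection form, one gets $K_{X_\Delta}^2 = K_{X'}^2 - \left(\sum_i a_iE_i\right)^2\geq K_{X'}^2 = 12 - v'$, a \emph{lower} bound, and the excess $-\left(\sum_i a_iE_i\right)^2$ can exceed the number $v'-v$ of exceptional curves (for the cone singularity $\frac{1}{b}(1,1)$ it equals $(b-2)^2/b$, which is $>1$ once $b\geq 5$). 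Likewise $-M\cdot K_{X_\Delta}$ does not count boundary lattice points of the rational polygon attached to $D+K_{X_\Delta}$, so the identity $r - r^{\mathrm{ad}} = K^2$ also fails ($10\neq 49/5$ above). A correct argument must work with $\Delta^{(1)}$ itself rather than with the adjoint polygon; Castryck's proof (like the related ``onion-skin'' results of Haase--Schicho and the Poonen--Rodriguez-Villegas twelve-point theorem) proceeds via the combinatorial ``moving out'' correspondence between the edges of $\Delta$ and the edges and vertices of $\Delta^{(1)}$, which is where the correction term $12-v$ genuinely comes from.
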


Since $v\geq 3$ and $r^{(1)}\leq g$, we immediately obtain the following corollary.

\begin{cor}\label{cor:bound_on_r}
If $\Delta$ is nonhyperelliptic with $r$ lattice boundary points and $g$ interior lattice points, then $r\leq g+9$.
\end{cor}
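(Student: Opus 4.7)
The plan is to deduce this corollary directly from Proposition \ref{nonhyp} by bounding each of the quantities $v$ and $r^{(1)}$ on the right-hand side of the inequality $r \leq r^{(1)} + 12 - v$.

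First I would bound $v$ from below. Every convex polygon has at least three vertices, so $v \geq 3$, which gives $-v \leq -3$, i.e., $12 - v \leq 9$. Next I would bound $r^{(1)}$ from above. By definition, $\Delta^{(1)}$ is the convex hull of the $g$ interior lattice points of $\Delta$, so it contains exactly $g$ lattice points. Its boundary lattice points form a subset of these, so $r^{(1)} \leq g$.

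Combining the two bounds with Proposition \ref{nonhyp} yields
\[
r \leq r^{(1)} + 12 - v \leq g + 9,
\]
which is the desired inequality. There is no real obstacle here beyond noting that $\Delta^{(1)}$'s boundary lattice points are a subset of its (at most $g$) total lattice points and that any convex polygon is at least a triangle; the corollary follows as an immediate specialization of Proposition \ref{nonhyp}.
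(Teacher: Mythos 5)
Your proof is correct and is essentially identical to the paper's argument: the authors likewise deduce the corollary from Proposition \ref{nonhyp} by observing that $v\geq 3$ and $r^{(1)}\leq g$. Your additional justification that the boundary lattice points of $\Delta^{(1)}$ form a subset of the $g$ interior lattice points of $\Delta$ is exactly the implicit reasoning behind the paper's one-line derivation.
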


By applying Pick's Theorem and this corollary, we can then use Theorem \ref{theorem:area} to obtain an upper bound for the number of lattice polygons of genus $g$.

\begin{lemma}\label{lemma:polygoncount}
There is a positive constant $C_2$ such that the number of convex nonhyperelliptic lattice polygons of a genus $g$ is at most $e^{C_2(\frac{3g}{2}+9)^{1/3}}$.
\end{lemma}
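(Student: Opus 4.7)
The plan is to combine Pick's Theorem with the boundary bound in Corollary \ref{cor:bound_on_r} to translate a genus bound into an area bound, and then feed that area bound into the upper half of Theorem \ref{theorem:area}.

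First I would bound the area of a nonhyperelliptic lattice polygon $\Delta$ of genus $g$. Let $r$ be the number of boundary lattice points of $\Delta$. By Pick's Theorem (Theorem \ref{theorem:picks}), the area of $\Delta$ equals $r/2 + g - 1$. Since $\Delta$ is nonhyperelliptic, Corollary \ref{cor:bound_on_r} gives $r \leq g + 9$, so
\[
\operatorname{area}(\Delta) \;\leq\; \frac{g+9}{2} + g - 1 \;=\; \frac{3g}{2} + \frac{7}{2} \;<\; \frac{3g}{2} + 9.
\]
Hence every nonhyperelliptic lattice polygon of genus $g$ has area strictly less than $A := \frac{3g}{2} + 9$.

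Next I would apply Theorem \ref{theorem:area}. Every nonhyperelliptic lattice polygon of genus $g$ is, in particular, a convex lattice polygon of area less than $A$, so the number of such polygons up to lattice equivalence is at most $N(A)$. The upper half of Theorem \ref{theorem:area} then gives a positive constant $C_2$ (the same constant as in the theorem statement) such that $\log N(A) \leq C_2 A^{1/3}$, i.e.
\[
N(A) \;\leq\; \exp\!\left( C_2 \left(\tfrac{3g}{2}+9\right)^{1/3} \right),
\]
which is exactly the claimed bound.

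There is essentially no obstacle here: the work is all contained in the two cited inputs (Corollary \ref{cor:bound_on_r} and Theorem \ref{theorem:area}), and the only thing to verify is the arithmetic $\frac{r}{2} + g - 1 \leq \frac{3g}{2} + 9$ for $r \leq g+9$, together with the observation that nonhyperellipticity is precisely the hypothesis needed to invoke Corollary \ref{cor:bound_on_r}. The slight slack between $\frac{3g}{2} + \frac{7}{2}$ and $\frac{3g}{2} + 9$ is harmless and simply gives a clean closed form for the exponent.
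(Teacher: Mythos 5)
Your proof is correct and follows essentially the same route as the paper's: bound $r\leq g+9$ via Corollary \ref{cor:bound_on_r}, convert to an area bound with Pick's Theorem, and invoke the upper half of Theorem \ref{theorem:area} with $A=\frac{3g}{2}+9$. In fact your arithmetic is the cleaner one --- the paper's proof transposes Pick's formula as $r+\frac{g}{2}-1$ and so lands on $\frac{3g}{2}+8$ rather than your $\frac{3g}{2}+\frac{7}{2}$, but both sit below the threshold $\frac{3g}{2}+9$, so the conclusion is unaffected.
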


\begin{proof}
If $\Delta$ is a  nonhyperelliptic  polygon of genus $g$, it has at most $g+9$ lattice boundary points by Corollary \ref{cor:bound_on_r}.  By Pick's Theorem, the area of $\Delta$ is $r+\frac{g}{2}-1$, which is at most $g+9+\frac{g}{2}-1=\frac{3g}{2}+8$.  This means that in order to bound the number of nonhyperelliptic  polygons of genus $g$, it suffices to bound the number of polygons with area less than $\frac{3g}{2}+8+1=\frac{3g}{2}+9$.  In the notation of Theorem \ref{theorem:area}, we are bounding $N\left(\frac{3g}{2}+9\right)$.  That theorem tells us there exists a positive constant $C_2$ such that that 
\[\log(N\left(\frac{3g}{2}+9\right))\leq C_2(\frac{3g}{2}+9)^{1/3}, \]
which can be rewritten as
\[N\left(\frac{3g}{2}+9\right)\leq e^{C_2(\frac{3g}{2}+9)^{1/3}}.  \]
Thus the number of nonhyperelliptic polygons of genus $g$ is bounded by $e^{C_2(\frac{3g}{2}+9)^{1/3}}$
\end{proof}

We now need to bound the number of unimodular triangulations a polygon of lattice width at least $\ell+1$ can have.  To do this, we prove the following proposition, which can be viewed as a stronger version of Corollary \ref{cor:bound_on_r} for this class of polygons.

\begin{prop}
\label{prop:big_lw}
Suppose that $\Delta$ is a polygon with of lattice width $\textrm{lw}(\Delta)\geq \ell+1$ where $\ell\geq 3$, and let $\Delta$ have genus $g$ and $r$ lattice boundary points.  Then $r\leq \frac{2g}{\ell}+4\sqrt{g+8/3}+2$
\end{prop}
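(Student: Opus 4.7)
The plan is to couple a ``horizontal slice'' estimate relating $r$ to the area of $\Delta$ with an upper bound on the lattice width in terms of the genus. After a unimodular transformation, I would take $w := \textrm{lw}(\Delta) \geq \ell+1$ to be realized in the vertical direction, so that $\Delta \subseteq \{0 \leq y \leq w\}$. For each integer $k \in [0,w]$, let $[a_k, b_k] := \Delta \cap \{y = k\}$, let $\mathrm{width}_k := b_k - a_k$, and let $m_k$ denote the number of lattice points on row $y = k$.

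First I would count $r$ row by row. Because $y = 0$ and $y = w$ are supporting lines of $\Delta$, every lattice point on either of those rows is a boundary point of $\Delta$, and since $a_0, b_0, a_w, b_w$ are integers (they coincide with vertices of $\Delta$), one has $m_0 = \mathrm{width}_0 + 1$ and $m_w = \mathrm{width}_w + 1$. Convexity forbids any horizontal edge of $\Delta$ at intermediate height (such an edge would contradict $\Delta$ having vertices both at $y = 0$ and at $y = w$), so for $0 < k < w$ only the two endpoints $a_k, b_k$ can be boundary lattice points on row $k$. Summing gives $r \leq \mathrm{width}_0 + \mathrm{width}_w + 2w$. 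By convexity, the function $\mathrm{width}(y)$ is concave on $[0, w]$: concavity yields $\sum_{k=0}^{w} \mathrm{width}_k \geq \tfrac{w+1}{2}(\mathrm{width}_0 + \mathrm{width}_w)$, while the trapezoidal lower bound (valid for integrals of concave functions) gives $A \geq \sum_{k=0}^{w} \mathrm{width}_k - \tfrac12(\mathrm{width}_0 + \mathrm{width}_w)$; combining these yields $\mathrm{width}_0 + \mathrm{width}_w \leq 2A/w$. Substituting into the row-count bound and applying Pick's theorem $A = g + r/2 - 1$ gives
\[
r \leq \frac{2A}{w} + 2w = \frac{2g + r - 2}{w} + 2w,
\]
which rearranges to $r \leq \tfrac{2g}{w-1} + 2w + 2$.

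Since $w - 1 \geq \ell$ by hypothesis, the first summand is at most $2g/\ell$, and it remains to show $2w \leq 4\sqrt{g + 8/3}$. For this I would note that $\textrm{lw}(\Delta) \geq 4$ forces $\Delta$ to be nonhyperelliptic, so Corollary~\ref{cor:bound_on_r} applies and gives $r \leq g + 9$; Pick's theorem then yields $A \leq 3g/2 + 7/2$. Coupling this with the classical inequality $A \geq w^2/2$ for any convex lattice polygon of lattice width $w$ (extremal on the right triangle with vertices $(0,0), (w,0), (0,w)$) gives $w^2 \leq 3g + 7 \leq 4g + 32/3$, hence $w \leq 2\sqrt{g + 8/3}$, completing the argument.

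The hard part will be the auxiliary area inequality $A \geq w^2/2$. The lattice-width hypothesis is much stronger than any two-directional bounding-box constraint---it controls the polygon's width in every primitive integer direction---so a direct geometric proof must avoid naive coordinate-axis reductions (note, for instance, that a polygon of horizontal and vertical widths both $\geq w$ need not contain a $w \times w$ axis-aligned square, as witnessed by the right triangle). The cleanest routes are either to cite the known classification of minimum-area lattice polygons of prescribed lattice width, or to set up a short induction on $w$ using the Castryck--Cools identity $\textrm{lw}(\Delta^{(1)}) = \textrm{lw}(\Delta) - 2$ combined with the bound of Corollary~\ref{cor:bound_on_r} applied to the successive interior polygons.
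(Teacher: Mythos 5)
Your first half is correct, and it is in fact more self-contained than the paper's argument: the row-by-row count of boundary lattice points (at most two per intermediate row, since a third would force a horizontal edge at a height strictly between the two supporting lines), combined with concavity of the width function and the trapezoidal estimate, yields exactly $r\leq \frac{2g}{w-1}+2w+2$. The paper simply imports this inequality as Theorem 8 of Castryck's \emph{Moving out the edges of a lattice polygon}, so you have effectively reproved that cited result from scratch, and your algebra checks out.

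The genuine gap is the auxiliary inequality $A\geq w^2/2$, which is false. The triangle $\conv\{(0,0),(2,1),(1,2)\}$ has area $3/2$ and lattice width $2$ (its width is $2$ in each of the directions $(1,0)$, $(0,1)$, and $(1,-1)$), so $A=3/2<2=w^2/2$; its dilates give counterexamples of every even lattice width, including widths $\geq 4$, so the failure occurs inside the class of polygons you are considering. The right triangle is not extremal here. The sharp classical inequality, due to Fejes T\'oth and Makai and cited by the paper as \cite{thinnest}, is $w^2\leq \frac{8}{3}A$, i.e.\ $A\geq\frac{3}{8}w^2$, and this is precisely the input the paper uses at this step. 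Fortunately your argument only needs $w^2\leq 4g+32/3$, and the correct constant still delivers it: $w^2\leq\frac{8}{3}A\leq\frac{8}{3}\bigl(\frac{3g}{2}+\frac{7}{2}\bigr)=4g+\frac{28}{3}\leq 4g+\frac{32}{3}$. So the proof is repaired by replacing your false lemma with the Fejes T\'oth--Makai bound; but as written the step would fail, and the classification of minimum-area polygons of given lattice width that you propose to cite would in fact reveal the constant $3/8$ rather than $1/2$.
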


\begin{proof}
First we recall the bound $\textrm{lw}(\Delta)^2\leq 8\textrm{Vol}(\Delta)/3$, proven in \cite{thinnest} and also presented in \cite[Lemma 5.2(vi)]{castryckpencil}.  By Pick's theorem we know $\textrm{Vol}(\Delta)=g+\frac{r}{2}-1$, and since $\Delta$ cannot be hyperelliptic due to its lattice width we know $r\leq g+9$ by Corollary \ref{cor:bound_on_r}. Combining these we find
\[\textrm{lw}(\Delta)^2\leq 8\textrm{Vol}/3=\frac{8}{3}\left(g+\frac{r}{2}-1\right)\leq \frac{8}{3}\left(g+\frac{g+9}{2}-1\right)=4g+\frac{32}{3}.\]
Square-rooting both sides yields $\textrm{lw}(\Delta)\leq \sqrt{4g+32/3}=2\sqrt{g+8/3}$. We now use \cite[Theorem 8]{Castryck2012}, which states
\[r\leq \frac{2}{\textrm{lw}(\Delta)-1}\cdot g+2\cdot \left(\textrm{lw}(\Delta)+1\right).\]  Since $\textrm{lw}(\Delta)\geq \ell+1$ and $\textrm{lw}(\Delta)\leq 2\sqrt{g+8/3}$, we have
\[r\leq \frac{2}{(\ell+1)-1}\cdot g+2\cdot\left(2\sqrt{g+8/3}+1\right)=\frac{2g}{\ell}+4\sqrt{g+8/3}+2,\]
as desired.
\end{proof}

\begin{prop}\label{prop:width_4}
A polygon of genus $g$ and lattice width at least $\ell+1$ admits at most 
$2^{\left(3+\frac{2}{\ell}\right)g+4\sqrt{g+8/3}-1}$ unimodular triangulations
\end{prop}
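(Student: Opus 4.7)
The target bound factors cleanly as $2^{r+3g-3}$ once one substitutes the estimate on $r$ from Proposition \ref{prop:big_lw}, where $r$ is the number of boundary lattice points of $\Delta$. So the plan is in three steps: first compute the number of interior edges in any unimodular triangulation of $\Delta$, then bound the number of such triangulations by an exponential in that edge count, and finally invoke Proposition \ref{prop:big_lw} to pass from a bound in terms of $r$ to a bound in terms of $g$ and $\ell$.

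The first step is immediate from Pick's theorem and Euler's formula. Pick's theorem gives the area of $\Delta$ as $\tfrac{r}{2}+g-1$, so any unimodular triangulation contains $r+2g-2$ triangles, hence $F=r+2g-1$ faces. With $V=r+g$ lattice points, Euler's formula $V-E+F=2$ yields $E=2r+3g-3$ edges in total; subtracting the $r$ forced boundary edges leaves exactly $r+3g-3$ interior edges. The third step is a routine substitution: plugging the bound $r \leq \tfrac{2g}{\ell}+4\sqrt{g+8/3}+2$ from Proposition \ref{prop:big_lw} into $r+3g-3$ gives
\[ r+3g-3 \leq \left(3+\frac{2}{\ell}\right)g + 4\sqrt{g+8/3} - 1, \]
which matches the claimed exponent exactly.

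The main obstacle is the middle step: showing that a lattice polygon whose unimodular triangulations have exactly $I$ interior edges admits at most $2^{I}$ such triangulations. The approach I would take is to encode each unimodular triangulation by its set of (primitive) interior edges and appeal to a general upper bound on the number of triangulations of a planar point configuration in terms of the interior-edge count of any fixed triangulation, of the flavor developed in \cite[\S 16]{handbook} and surveyed in \cite{triangulations}. The subtle point is matching the constant exactly: keeping the coefficient of $g$ in the final exponent equal to $3$, rather than a larger constant, requires the bound on triangulations to be truly linear in the number of interior edges. A naive approach bounding triangulations by the number of subsets of all non-crossing primitive lattice segments in $\Delta$ would blow up the constant, since the number of such candidate segments can exceed $r+3g-3$ by a large multiplicative factor; so some inductive flip-graph or greedy-encoding argument is essential.
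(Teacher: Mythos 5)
Your proposal is correct and follows essentially the same route as the paper: the paper's proof simply cites \cite[Theorem 9.3.7]{triangulations} (Anclin's bound that a lattice polygon has at most $2^{3g+r-3}$ unimodular triangulations, i.e.\ $2$ to the number of interior edges, which you compute correctly) and then substitutes the estimate on $r$ from Proposition \ref{prop:big_lw} exactly as you do. The ``middle step'' you rightly identify as the crux is precisely that cited theorem, so no new argument is needed there.
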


\begin{proof}
Let $\Delta$ be a nonhyperelliptic polygon of genus $g$ with $r$ lattice boundary points and lattice width at least $\ell+1$.   By  \cite[Theorem 9.3.7]{triangulations}, $\Delta$ admits at most  $2^{3g+r-3}$ unimodular triangulations.  By Proposition \ref{prop:big_lw}
 we have $r\leq \frac{2g}{\ell}+4\sqrt{g+8/3}+2$.  It follows that the number of unimodular triangulations of $\Delta$ is bounded by \[2^{3g+\left(\frac{2g}{\ell}+4\sqrt{g+8/3}+2\right)-3}=2^{\left(3+\frac{2}{\ell}\right)g+4\sqrt{g+8/3}-1}.\]
\end{proof}

Combined with our bound on the number of lattice polygons of genus $g$, this gives the following result.

\begin{cor}\label{corollary:width_4}
We have $\mathscr{T}^{\textrm{lw}}_{\geq \ell+1}(g)\leq e^{C_2(\frac{3g}{2}+9)^{1/3}}\cdot2^{\left(3+\frac{2}{\ell}\right)g+4\sqrt{g+8/3}-1}$, where $C_2$ is the constant from Lemma \ref{lemma:polygoncount}.
\end{cor}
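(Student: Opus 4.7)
The plan is to combine the two bounds established immediately before the corollary. Every troplanar graph counted by $\mathscr{T}^{\textrm{lw}}_{\geq \ell+1}(g)$ arises, by definition, from at least one pair $(\Delta,\mathcal{T})$ where $\Delta$ is a lattice polygon of genus $g$ with $\textrm{lw}(\Delta)\geq \ell+1$ and $\mathcal{T}$ is a regular unimodular triangulation of $\Delta$. Since $\ell\geq 3$ forces $\textrm{lw}(\Delta)\geq 4$, such a polygon cannot be hyperelliptic (hyperelliptic polygons have lattice width at most $2$ once $g\geq 2$, and in any event have $1$-dimensional interior). So it suffices to bound the number of such pairs.

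First I would apply Lemma \ref{lemma:polygoncount} to bound the number of candidate polygons $\Delta$ by $e^{C_2(\tfrac{3g}{2}+9)^{1/3}}$. Then, for each such $\Delta$, I would apply Proposition \ref{prop:width_4} to bound the number of unimodular triangulations by $2^{(3+2/\ell)g+4\sqrt{g+8/3}-1}$; note that Proposition \ref{prop:width_4} only requires $\textrm{lw}(\Delta)\geq \ell+1$ with $\ell\geq 3$, which is exactly the hypothesis defining our class. Multiplying the two bounds and noting that each troplanar graph is counted at least once gives
\[
\mathscr{T}^{\textrm{lw}}_{\geq \ell+1}(g)\;\leq\; e^{C_2(\tfrac{3g}{2}+9)^{1/3}}\cdot 2^{\left(3+\tfrac{2}{\ell}\right)g+4\sqrt{g+8/3}-1},
\]
which is the claimed inequality.

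There is essentially no obstacle here: the argument is a bookkeeping step that multiplies a bound on the number of polygons by a bound on the number of triangulations per polygon. The only points that deserve a sentence of justification are (i) that the lattice-width hypothesis rules out the hyperelliptic case, so Lemma \ref{lemma:polygoncount} (which is stated for nonhyperelliptic polygons) applies, and (ii) that passing from the count of pairs $(\Delta,\mathcal{T})$ to the count of distinct troplanar graphs only weakens the inequality (different pairs may yield the same skeleton), so the bound is preserved. No new constants or estimates are required beyond those already produced in Lemma \ref{lemma:polygoncount} and Proposition \ref{prop:width_4}.
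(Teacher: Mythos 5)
Your argument is correct and is exactly the one the paper intends (the corollary is stated there without a written proof, as an immediate consequence of Lemma \ref{lemma:polygoncount} and Proposition \ref{prop:width_4}): bound the number of admissible polygons, multiply by the per-polygon bound on unimodular triangulations, and note that over-counting pairs only weakens the inequality. Your two side remarks --- that lattice width at least $4$ rules out the hyperelliptic case so the nonhyperelliptic polygon count applies, and that distinct triangulations may yield the same skeleton --- are the right points to flag and match the paper's implicit reasoning.
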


We can now prove the following upper bound on $\mathscr{T}(g)$.

\begin{theorem}\label{theorem:best}
We have
\[\mathscr{T}(g)=O\left(2^{\frac{11g}{3}+O(\sqrt{g})}\right).\]
\end{theorem}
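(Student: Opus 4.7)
The plan is to combine the three preceding stratification results in the obvious way, with $\ell=3$ as the calibrating choice. Write
\[\mathscr{T}(g)\leq \mathscr{T}^{\textrm{lw}}_2(g)+\mathscr{T}^{\textrm{lw}}_3(g)+\mathscr{T}^{\textrm{lw}}_{\geq 4}(g),\]
as justified in the paragraph introducing the stratification, and bound each summand separately. The first term is the hyperelliptic case, for which the exact formula $\mathscr{T}^{\textrm{lw}}_2(g)=2^{g-2}+2^{\lfloor(g-2)/2\rfloor}=O(2^{g})$ has already been recorded. Corollary \ref{corollary:width_3} gives $\mathscr{T}^{\textrm{lw}}_3(g)=O(8^{g}\sqrt{g})=O(2^{3g}\sqrt{g})$. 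Both of these will be dominated by the third summand, since $3g<\tfrac{11g}{3}$.

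For the dominant term I would specialize Corollary \ref{corollary:width_4} to $\ell=3$, giving
\[\mathscr{T}^{\textrm{lw}}_{\geq 4}(g)\leq e^{C_2(\frac{3g}{2}+9)^{1/3}}\cdot 2^{\left(3+\frac{2}{3}\right)g+4\sqrt{g+8/3}-1}=e^{C_2(\frac{3g}{2}+9)^{1/3}}\cdot 2^{\frac{11g}{3}+4\sqrt{g+8/3}-1}.\]
The polygon-count prefactor $e^{C_2(3g/2+9)^{1/3}}$ equals $2^{O(g^{1/3})}$, which is absorbed into the $O(\sqrt{g})$ in the exponent, as is the additive term $4\sqrt{g+8/3}-1$. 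Thus the third summand is $2^{\frac{11g}{3}+O(\sqrt{g})}$.

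Finally I would observe that each of the first two summands is bounded above by $2^{3g+o(g)}$, which is likewise of the form $2^{\frac{11g}{3}+O(\sqrt{g})}$ (since $3g<\tfrac{11g}{3}$ leaves plenty of slack to absorb a $\sqrt{g}\log_2 g$ factor). Summing three terms each bounded by $2^{\frac{11g}{3}+O(\sqrt{g})}$ gives the same asymptotic bound, yielding $\mathscr{T}(g)=O\!\left(2^{\frac{11g}{3}+O(\sqrt{g})}\right)$. There is no real obstacle here; the entire argument is bookkeeping once Corollaries \ref{corollary:width_3} and \ref{corollary:width_4} and the hyperelliptic count are in hand. The only point worth checking carefully is that setting $\ell=3$ is the optimal choice among $\ell\geq 3$: increasing $\ell$ shrinks the $\tfrac{2}{\ell}$ in the exponent from Corollary \ref{corollary:width_4} but would force us to also bound $\mathscr{T}^{\textrm{lw}}_{\ell}(g)$ by a separate argument, and with $\ell=3$ we already have the sharp bound of Corollary \ref{corollary:width_3} available, so no further work is needed.
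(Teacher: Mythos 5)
Your proposal is correct and follows essentially the same route as the paper: the same three-way stratification by lattice width, the same three bounds (the hyperelliptic formula, Corollary \ref{corollary:width_3}, and Corollary \ref{corollary:width_4} with $\ell=3$), and the same observation that the $\mathscr{T}^{\textrm{lw}}_{\geq 4}(g)$ term dominates. Your explicit check that the polygon-count prefactor $e^{C_2(3g/2+9)^{1/3}}=2^{O(g^{1/3})}$ is absorbed into the $O(\sqrt{g})$ in the exponent is a detail the paper leaves implicit, but there is no substantive difference.
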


\begin{proof}
As already noted, we have
\[\mathscr{T}(g)\leq \mathscr{T}^{\textrm{lw}}_{2}(g)+\mathscr{T}^{\textrm{lw}}_{3}(g)+\mathscr{T}^{\textrm{lw}}_{\geq 4}(g)\]
We know $\mathscr{T}^{\textrm{lw}}_{2}(g)=O(2^g)$, and by Corollary \ref{corollary:width_3} we have $\mathscr{T}^{\textrm{lw}}_{2}(g)=O(8^g/\sqrt{g})$.  It follows from Corollary \ref{corollary:width_4} when $\ell=3$ that $\mathscr{T}^{\textrm{lw}}_{\geq 4}(g)=O\left(2^{\frac{11g}{3}+O(\sqrt{g})}\right)$.  The largest of the three bounds is the one coming from $\mathscr{T}^{\textrm{lw}}_{\geq 4}(g)$, and so this serves as our bound for $\mathscr{T}(g)$.
\end{proof}

It follows that $\mathscr{T}(g)=o\left((2^{11/3}+\varepsilon)^{g}\right)$ for any $\varepsilon>0$, where $2^{11/3}\approx12.699$.  This illustrates that Theorem \ref{theorem:best} is indeed a stronger bound on $\mathscr{T}(g)$ than the result from Corollary \ref{cor:bound}.  It also provides another argument that most connected trivalent planar graphs are not tropical: by Theorem \ref{theorem:planar_asymptotics}, $\mathscr{P}(g)=\Omega\left((2g-2)^{-7/2}\xi^g\right)$ where $\xi\approx 15.88451$.  Due to this exponential base being larger than $2^{11/3}+\varepsilon$, the ratio $\mathscr{T}(g)/\mathscr{P}(g)$ rapidly goes to $0$ as $g$ increases.

We close this section by discussing a few ways in which our upper bound might be improved.   One strategy could be to stratify further by lattice width.  Fix $\ell\geq 3$, and consider the bound
\[\mathscr{T}(g)\leq \mathscr{T}^{\textrm{lw}}_{2}(g)+\cdots+\mathscr{T}^{\textrm{lw}}_{\ell}(g)+\mathscr{T}^{\textrm{lw}}_{\geq \ell+1 }(g).\]
We know by Corollary \ref{corollary:width_4} that 
\[\mathscr{T}^{\textrm{lw}}_{\geq\ell+1}(g)=O\left(2^{\left(3+\frac{2}{\ell}\right)g+O(\sqrt{g})}\right).\]
If we can effectively bound $\mathscr{T}^{\textrm{lw}}_{i}(g)$ for $i\leq\ell$, this could lead to an improvement on Theorem \ref{theorem:best}.  So far we have bounds on $\mathscr{T}^{\textrm{lw}}_{2}(g)$ and $\mathscr{T}^{\textrm{lw}}_{3}(g)$, so $\mathscr{T}^{\textrm{lw}}_{4}(g)$ would be the next step. We remark that this strategy will never lead to a stronger bound than $O(2^{(3+\varepsilon)g})$ on  $\mathscr{T}(g)$.

Another possible direction for future improvement comes from the observation that the bound from Proposition \ref{prop:width_4} is a bound on \emph{all} unimodular triangulations, when we only need to consider those unimodular triangulations that are \emph{regular}. Numerics such as those  computed in \cite{countinglattice} suggest that regular triangulations are much rarer than non-regular triangulations for large polygons.  A precise enough result to this effect could lower the number of triangulations we consider.

Even restricting to regular triangulations, we note that many triangulations can give the same graph, even though our bound from Corollary \ref{corollary:width_4} comes from bounding the total number of triangulations.  For instance, as computed in \cite{BJMS}, the unique maximal nonhyperelliptic polygon of genus $3$ admits $1278$ regular unimodular triangulations up to symmetry, but only yields four distinct troplanar graphs.  Unfortunately, we cannot in general say that each troplanar graph arises from many different triangulations:  the graph of genus $4$ illustrated in Figure \ref{figure:minor} only arises from a single triangulation of a single polygon, as shown in \cite{BJMS}.

We also remark on one way in which our argument is already essentially optimal. Although it is a less dramatic contributor to our bound, we might wonder if we are significantly overestimating the number of polygons of genus $g$ that we must consider.  The reader may notice that the bound from Lemma \ref{lemma:polygoncount} is for general convex lattice polygons, while we only need to work with maximal polygons. The following arguments show that in fact this will not meaningfully change our upper bound from Lemma \ref{lemma:polygoncount}. 

Note that if a polygon is nonmaximal, then at least one of its edges has lattice length $1$.  This is because any nonmaximal polygon can be constructed by taking a maximal polygon with the same interior polygon, removing carefully chosen boundary points, and taking the convex hull of the remaining points; assuming the interior polygon has not been changed, this always produces an edge of lattice length $1$. Contrapositively, if all sides of a polygon have lattice length $2$ or more, then that polygon is maximal. Let $\Delta$ be a lattice polygon, and let $\Delta '$ be $2\Delta$, the polygon obtained by scaling $\Delta$ by a factor of $2$.  In going from $\Delta$ to $\Delta'$, the perimeter increases by a factor of $2$, and the area increases by a factor of $4$.

\begin{lemma}  Let $\Delta$ and $\Delta '$ have $g$ and $g'$ interior lattice points and $r$ and $r'$ boundary lattice points, respectively.  Then $g'=4g+r-3$.  Moreover, if $g\geq 1$, then $g'\leq 6g+4$.
\end{lemma}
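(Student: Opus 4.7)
The plan is to derive the identity $g' = 4g + r - 3$ from Pick's Theorem together with two elementary scaling observations, and then to use a known bound on $r$ in terms of $g$ to deduce the inequality.

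First I would record two basic facts about the doubling map $\Delta \mapsto 2\Delta$: each boundary edge of $\Delta$ has its lattice length doubled, so $r' = 2r$; and scaling a planar region by a factor of $2$ multiplies its area by $4$, so $\mathrm{Area}(\Delta') = 4\,\mathrm{Area}(\Delta)$. Applying Pick's Theorem (Theorem \ref{theorem:picks}) to $\Delta$ and $\Delta'$ separately gives
\[
g' + \tfrac{r'}{2} - 1 \;=\; 4\bigl(g + \tfrac{r}{2} - 1\bigr).
\]
After substituting $r' = 2r$ and simplifying, this collapses to $g' = 4g + r - 3$, which proves the first assertion.

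For the second assertion, I need to control $r$ by $g$. Corollary \ref{cor:bound_on_r} provides $r \leq g + 9$ for nonhyperelliptic polygons, but since $\Delta$ could be hyperelliptic I would instead invoke Scott's classical inequality: any lattice polygon with $g \geq 1$ interior lattice points satisfies $r \leq 2g + 7$, with equality only for the standard triangle of side length $3$. Substituting this bound into the identity gives
\[
g' \;=\; 4g + r - 3 \;\leq\; 4g + (2g+7) - 3 \;=\; 6g + 4,
\]
as desired.

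The only mild subtlety is the need to handle hyperelliptic $\Delta$, for which Corollary \ref{cor:bound_on_r} does not apply; Scott's inequality is the cleanest universal substitute. Otherwise the argument is essentially a computation via Pick's Theorem, and I do not foresee any real obstacle.
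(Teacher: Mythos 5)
Your proof is correct and follows essentially the same route as the paper: Pick's Theorem applied to $\Delta$ and $2\Delta$ together with $r'=2r$ and the factor-of-$4$ area scaling yields $g'=4g+r-3$, and Scott's inequality $r\leq 2g+7$ then gives $g'\leq 6g+4$. No differences worth noting.
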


\begin{proof} By Pick's theorem, the area of $\Delta$ is $\frac{r}{2}+g-1$ and the area of $\Delta'$ is $\frac{r'}{2}+g'-1$,  We know that the area of $\Delta'$ is four times that of $\Delta$, so $2r+4g-4=\frac{r'}{2}+g'-1$. We also know that $r'=2r$, since the number of boundary points is equal to the (lattice) perimeter, giving us $2r+4g-4=r+g'-1$.  Solving for $g'$ gives $g'=4g+r-3$.

For the inequality on $g'$, we will use the fact that $r\leq 2g+7$ for any lattice polygon of genus at least $1$  \cite{scott}.  Since   $g'=4g+r-3$, we have $g'\leq 4g+(2g+7)-3=6g+4$, as claimed.
\end{proof}

\begin{prop}  Let $g\geq 1$.  If there are $N$ lattice polygons of positive genus at most $g$, then there are at least $N$ maximal lattice polygons of genus at most $6g+4$.
\end{prop}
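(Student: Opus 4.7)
The plan is to construct an explicit injection from equivalence classes of lattice polygons of positive genus at most $g$ into equivalence classes of maximal lattice polygons of genus at most $6g+4$. The map I would use is the doubling map $\Phi\colon \Delta \mapsto 2\Delta$, which rescales every vertex by a factor of $2$. The bound on the genus of the image is immediate from the preceding lemma, so the real content is showing (i) that $2\Delta$ is always maximal, and (ii) that $\Phi$ descends to an injection on equivalence classes.

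For (i), I would use exactly the contrapositive observation stated in the paragraph just before the proposition: if every edge of a polygon has lattice length at least $2$, then the polygon is maximal. Any edge of $\Delta$ has lattice length at least $1$, so the corresponding edge of $2\Delta$ has lattice length at least $2$, and the criterion applies.

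For (ii), I would verify two things. First, that $\Phi$ is well-defined on equivalence classes: if $\Delta_1 = A\Delta_2 + w$ with $A\in \mathrm{PSL}_2(\mathbb{Z})$ and $w\in\mathbb{Z}^2$, then $2\Delta_1 = A(2\Delta_2) + 2w$, so $2\Delta_1 \sim 2\Delta_2$. Second, that $\Phi$ is injective on equivalence classes: if $2\Delta_1 = A(2\Delta_2) + v$ with $v\in\mathbb{Z}^2$, then all vertices of $2\Delta_1$ and of $A(2\Delta_2)$ lie in $2\mathbb{Z}^2$, which forces $v\in 2\mathbb{Z}^2$. Writing $v=2w$ with $w\in\mathbb{Z}^2$ and dividing by $2$ yields $\Delta_1 = A\Delta_2 + w$, so $\Delta_1\sim\Delta_2$.

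Combining (i), (ii), and the previous lemma, $\Phi$ gives an injection from the $N$ equivalence classes of lattice polygons of genus $1 \leq g(\Delta) \leq g$ into the equivalence classes of maximal lattice polygons of genus at most $6g+4$. The main conceptual point is that doubling a polygon is essentially ``free'' in terms of how much we can overcount maximal polygons, so the maximality restriction only improves our polygon count by a constant factor in the exponent rather than qualitatively; I do not expect any real obstacle beyond being careful about how translations interact with the doubling map in the injectivity step.
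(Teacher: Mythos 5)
Your proposal is correct and follows essentially the same route as the paper: the paper also maps each polygon $\Delta$ to $2\Delta$, notes that every side of $2\Delta$ has lattice length at least $2$ (hence $2\Delta$ is maximal), and invokes the preceding lemma for the genus bound. The paper simply asserts that the $N$ images are distinct, whereas you spell out the injectivity on equivalence classes; that added detail is harmless and the argument is the same.
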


\begin{proof}  Assume there are $N$ (distinct) lattice polygons of positive genus at most $g$.  Mapping each polygon $\Delta$ to $2\Delta$ gives a collection of $N$ distinct maximal lattice polygons, since each side of $2\Delta$ has length at least $2$.  By the previous lemma, each of these polygons has genus at most $6g+4$, proving our claim.
\end{proof}

We now use the lower bound from Theorem \ref{theorem:area}, which says that there is a positive constant $C_1$ such that $C_1A^{1/3}\leq \log(N(A))$, where $N(A)$ is the number of convex lattice polygons with area less than $A\in\mathbb{Q}^+$.  Note that a lattice polygon of genus $g$ has area at least $g+\frac{3}{2}-1=g+\frac{1}{2}$ by Pick's Theorem.  This means that the number of polygons of genus at most $g$ is an upper bound for $N(g+1)$, the number of polygons with area less than $g+1$.  Thus any lower bound on $N(g+1)$ is also a lower bound on the number of polygons of genus at most $g$.  By a similar argument to that presented in Lemma \ref{lemma:polygoncount}, we have $e^{C_1(g+1)^{1/3}}\leq N(g+1)$, so there are at least $e^{C_1(g+1)^{1/3}}$ polygons of genus at most $g$. Only a few of these polygons can have genus $0$: by the classification result in \cite{Koelman,Castryck2012}, only quadratically many genus $0$ polygons have a fixed area $A$, so only cubically many genus $0$ polygons have area at most $A$.  This is dwarfed by the exponential in $g^{1/3}$, so we can replace $C_1$ with another positive constant $C_1'$ to conclude that there are at least $e^{C_1'(g+1)^{1/3}}$ polygons of positive genus at most $g$ (assuming that $g\geq 1$). 

Solving $g'\leq 6g+4$ for $g$ gives $g\geq \frac{g'-4}{6}$, yielding the following bound.

\begin{cor}  There exists a positive constant $C_1'$ such that for all $g\geq 10$, there are at least $e^{C_1'\left((g+2)/6\right)^{1/3}}$ maximal polygons of genus at most $g$.
\end{cor}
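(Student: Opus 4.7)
The plan is to combine the immediately preceding Proposition with the lower bound on the number of positive-genus polygons derived in the paragraphs just before the corollary. Recall that the Proposition says that if there are $N$ lattice polygons of positive genus at most $g$, then there are at least $N$ maximal polygons of genus at most $6g+4$. Meanwhile, the earlier argument (using Theorem~\ref{theorem:area} and stripping off the at most cubically many genus $0$ polygons) gives a positive constant $C_1'$ such that, for every $g \geq 1$, the number of polygons of positive genus at most $g$ is at least $e^{C_1'(g+1)^{1/3}}$.

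The proof is then a straightforward change of variable. Writing $g'$ for the target genus bound in the corollary, I would set $g = (g'-4)/6$ so that $6g + 4 = g'$. The constraint $g \geq 1$ needed to invoke the lower bound then becomes exactly $g' \geq 10$, which is the hypothesis of the corollary. Applying the Proposition to the $e^{C_1'(g+1)^{1/3}}$ polygons of positive genus at most $g$ produces at least that many maximal polygons of genus at most $g'$.

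Finally I would rewrite the exponent in terms of $g'$. Since $g + 1 = (g'-4)/6 + 1 = (g'+2)/6$, the number of maximal polygons of genus at most $g'$ is at least
\[
e^{C_1'\left((g'+2)/6\right)^{1/3}},
\]
which is the claimed bound (with $g'$ renamed back to $g$). There is no real obstacle here: all the work was done in establishing the Proposition and the lower bound $e^{C_1'(g+1)^{1/3}}$; the corollary is just the statement one obtains by inverting the relation $g' = 6g + 4$ and tracking where the hypothesis $g \geq 1$ goes.
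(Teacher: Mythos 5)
Your proposal is correct and matches the paper's own proof: both invoke the preceding proposition with $g$ replaced by $(g-4)/6$ (so that $6\cdot\frac{g-4}{6}+4=g$), use the hypothesis $g\geq 10$ to guarantee $(g-4)/6\geq 1$, and simplify the exponent via $\frac{g-4}{6}+1=\frac{g+2}{6}$. No differences worth noting.
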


\begin{proof}
Our assumption on $g$ guarantees that $(g-4)/6\geq 1$.  There are at least $e^{C_1'(((g-4)/6)+1)^{1/3}}=e^{C_1'\left((g+2)/6\right)^{1/3}}$ lattice polygons of positive genus at most $(g-4)/6$.  For each such polygon $\Delta$, the maximal polygon $2\Delta$ has genus at most $g$.  It follows that there are at least $e^{C_1'\left((g+2)/6\right)^{1/3}}$ maximal polygons of genus at most $g$. 
\end{proof}

\section{A lower bound on the number of troplanar graphs}
\label{sec:lower}

As mentioned briefly in the previous section, hyperelliptic polygons of genus $g$ give rise to  $2^{g-2}+2^{\lfloor (g-2)/2\rfloor}$ distinct skeletons.  To see where this formula comes from, we recall the following argument from \cite[\S 6]{BJMS}. All graphs that arise from hyperelliptic polygons are \emph{chains}, meaning that they have the structure of a sequential chain of loops with two adjacent chains either sharing a common edge or being joined by a bridge.  The three chains of genus $3$ are illustrated in Figure \ref{figure:chains}, along with subdivisions of a hyperelliptic polygon that give rise to them.  One can specify a chain by a binary string of length $g-1$, describing what the sequence of bridges and shared edges is; for the illustrated graphs, these strings would be $00$, $01$, and $11$, where $0$ is a shared edge and $1$ is a bridge. Two strings give the same graph if and only if they are reverses of one another, like $01$ and $10$.  The number of binary strings of length $g-1$, up to this reversing equivalence, is $2^{g-2}+2^{\lfloor (g-2)/2\rfloor}$. This gives us a lower bound on $\mathscr{T}(g)$, and we may write $\mathscr{T}(g)=\Omega(2^g)$.

\begin{figure}[hbt]
   		 \centering
		\includegraphics[scale=1]{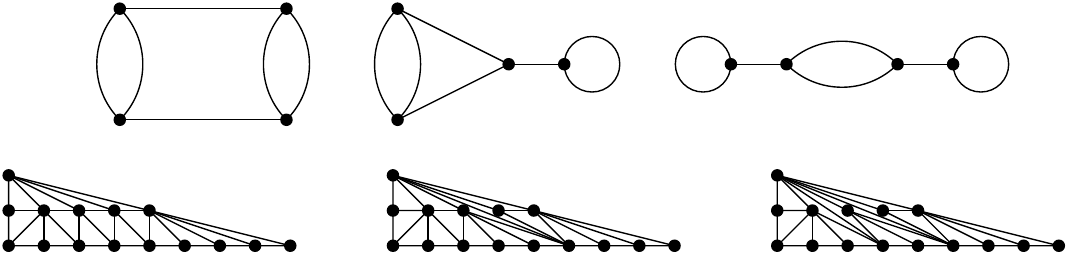}
	\caption{The three chains of genus three, and triangulations of a hyperelliptic polygon giving rise to them}
	\label{figure:chains}
\end{figure}

Our goal for this section is to provide a  better lower bound on $\mathscr{T}(g)$ by constructing a family of graphs that is asymptotically larger than the family of chains.  To do so, we will construct many unimodular triangulations of a polygon of genus $g$, argue that these triangulations are regular, and prove that each triangulations gives rise to different troplanar graph. These are in general challenging endeavors which are much simpler in the hyperelliptic case. First, all unimodular triangulations of hyperelliptic polygons are regular \cite[Proposition 3.4]{countinglattice}.  Second, determining whether two chains are isomorphic is relatively simple: they must have the same pattern of shared edges and bridges, up to perhaps flipping the graph around.  Our family of troplanar graphs will be constructed so that each pair of graphs can be similarly verified to be non-isomorphic.  We begin with the following result.

\begin{prop}\label{prop:isomorphic} Suppose we have two trivalent graphs $G$ and $H$ of genus $g$ that are of the following forms:
$$\includegraphics{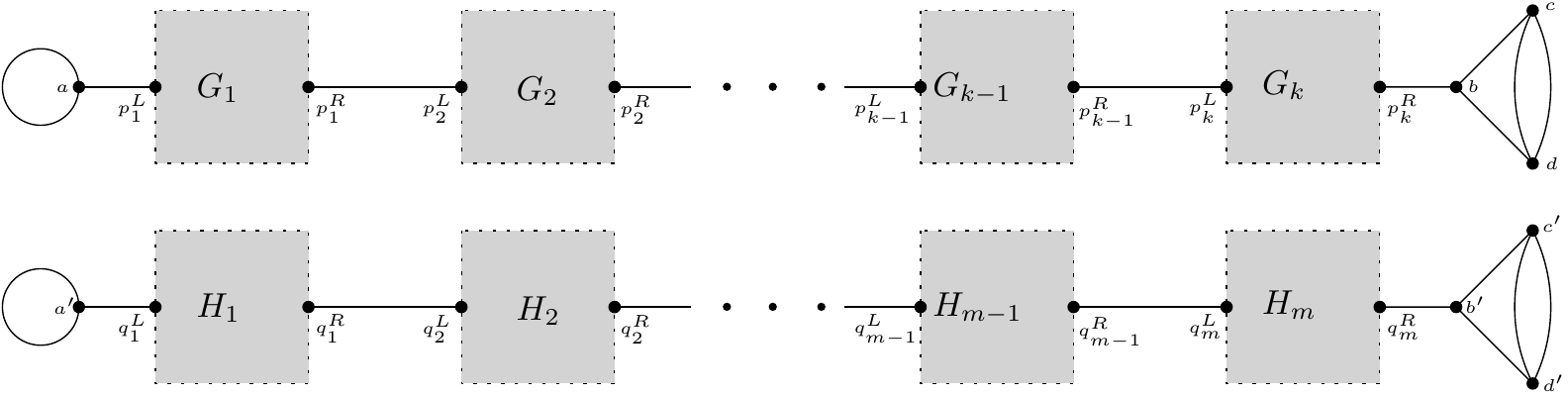}$$
where each box contains a positive-genus $2$-edge-connected component $G_i$ of $G$ or $H_i$ of $H$, respectively, with exactly two $2$-valent vertices where the bridges connect; call these vertices $p_i^{L}$ and $p_i^R$ for $G_i$, and $q_i^{L}$ and $q_i^R$ for $H_i$.  Then $G$ is isomorphic to $H$ if and only if $k=m$ and for each $i$, $G_i$ is isomorphic to $H_i$ via an isomorphism sending $p_i^{L}$ to $q_i^{L}$ and $p_i^{R}$ to $q_i^{R}$.

\end{prop}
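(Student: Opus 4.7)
The proof splits naturally into the two directions of the equivalence, and the easier direction is a direct construction while the harder direction uses the invariance of bridges and blocks under graph isomorphism.

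For the reverse direction, assume we are given isomorphisms $\varphi_i \colon G_i \to H_i$ with $\varphi_i(p_i^L) = q_i^L$ and $\varphi_i(p_i^R) = q_i^R$. I would assemble these into a global isomorphism $\varphi \colon G \to H$ by letting $\varphi$ act as $\varphi_i$ on the vertices and edges of $G_i$, and by sending the $i$-th bridge of $G$ (connecting $p_i^R$ to $p_{i+1}^L$) to the $i$-th bridge of $H$ (connecting $q_i^R$ to $q_{i+1}^L$). Well-definedness on bridge endpoints is immediate from the marked-vertex conditions, and the map is clearly a bijection preserving edge multiplicities.

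For the forward direction, suppose $\varphi \colon G \to H$ is an isomorphism. The key observation is that an edge is a bridge if and only if its removal disconnects its connected component, a purely combinatorial condition. Hence $\varphi$ sends bridges to bridges and therefore maps the 2-edge-connected components of $G$ bijectively onto those of $H$. Contracting each 2-edge-connected component to a point produces a \emph{block graph}, which is also a graph isomorphism invariant. In our setup both block graphs are paths of lengths $k$ and $m$ respectively, so forcing them to be isomorphic gives $k = m$, and the induced map on block graphs is either the identity permutation on $\{1,\dots,k\}$ or the reversal $i \mapsto k+1-i$.

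Finally, I would verify the marked vertices correspond correctly. Within each $G_i$, the vertices $p_i^L$ and $p_i^R$ are characterized intrinsically as the two $2$-valent vertices (and more specifically as the two vertices of $G_i$ incident to bridges of $G$). Since $\varphi$ preserves valences and sends bridges to bridges, it must send $\{p_i^L, p_i^R\}$ to $\{q_{\sigma(i)}^L, q_{\sigma(i)}^R\}$, where $\sigma$ is the permutation determined above. If $\sigma$ is the identity, we are done; if $\sigma$ is the reversal, we first relabel the $H_i$'s in reverse and swap the $L/R$ labels to reduce to the identity case. The step I expect to require the most care is this last bookkeeping around the chain-reversal symmetry — ensuring the $L/R$ data is tracked correctly under the reversal so that the final component-wise isomorphisms genuinely send $p_i^L \mapsto q_i^L$ and $p_i^R \mapsto q_i^R$ as claimed.
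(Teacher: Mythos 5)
There is a genuine gap in your forward direction, and it sits exactly at the step you flagged as delicate. The proposition asserts that $G\cong H$ forces $G_i\cong H_i$ \emph{in the given order}, with $p_i^L\mapsto q_i^L$ and $p_i^R\mapsto q_i^R$. Your argument correctly observes that an isomorphism preserves bridges and hence induces a map of block paths that is either the identity or the reversal, but your proposed handling of the reversal case --- ``relabel the $H_i$'s in reverse and swap the $L/R$ labels to reduce to the identity case'' --- does not prove the stated conclusion: after relabeling you would only have shown $G_i\cong H_{m+1-i}$ with $L$ and $R$ exchanged, which is a different (and for a genuinely symmetric chain, strictly weaker) statement. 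Indeed, for a chain whose two ends were interchangeable the proposition as written would be \emph{false}: take $H$ to be $G$ with the interior components listed in reverse order, with $G_1\not\cong G_2$; then $G\cong H$ by flipping the chain, yet $G_1\not\cong H_1$. So the reversal must be \emph{excluded}, not absorbed by relabeling.

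What makes the exclusion possible --- and what your proof never uses --- is the asymmetry of the two end gadgets visible in the figure: one end of each chain carries a loop at the vertex $a$ (resp.\ $a'$), and the other end carries a different, higher-genus structure on $b,c,d$ (resp.\ $b',c',d'$). The paper's proof anchors the isomorphism by noting that $a$ and $a'$ are the \emph{only} loop-incident vertices, so $\varphi(a)=a'$, hence $\varphi(p_1^L)=q_1^L$; it then propagates left-to-right along the chain, forcing $\varphi(p_i^R)=q_i^R$ and $\varphi(p_{i+1}^L)=q_{i+1}^L$ at each bridge. Your block-graph framework is compatible with this fix --- you need only add that the induced map on block paths must send the loop-block to the loop-block, which rules out the reversal outright --- but as written the argument does not establish the proposition. (A smaller omission in the same spirit: your converse direction also needs to extend the component isomorphisms over the two end gadgets, not just over the $G_i$ and the bridges.)
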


\begin{proof} 

If $k=m$ and $G_i$ is isomorphic to $H_i$ for all $i$ via an isomorphism sending $p_i^L$ to $q_i^L$ and $p_i^R$ to $q_i^R$, then there is a natural isomorphism between $G$ and $H$:  simply use the isomorphisms on the $2$-edge-connected components, and send $a,b,c,$ and $d$ to $a',b',c',$ and $d'$, respectively.

Now assume $G$ is isomorphic to $H$.  Since $G$ has $k+1$ bridges and $H$ has $m+1$ bridges, we must have $k=m$ since isomorphism preserves the number of bridges.  Let $\varphi:V(G)\rightarrow V(H)$ be a graph isomorphism.  Since $a$ and $a'$ are the only vertices incident to a loop, we must have $\varphi(a)=a'$.  Since $a$ is only adjacent to $p_1^L$ and $a'$ is only adjacent to $q_1^L$, we must have $\varphi(p_1^L)=q_1^L$.  Any isomorphism of graphs will map $2$-edge-connected components to $2$-edge-connected components, and since one vertex in $G_1$ is mapped to one vertex in $H_1$, $\varphi$ restricted to $V(G_1)$ must give an isomorphism from $G_1$ to $H_1$.  Only two vertices in $G_1$ are incident to a bridge in $G$, namely $p_1^L$ and $p_1^R$; the same is true for $H_1$ in $H$, namely $q_1^L$ and $q_1^R$.  Since $\varphi(p_1^L)=q_1^L$, we must have $\varphi(p_1^R)=q_1^R$.  Thus, $G_1$ and $H_1$ are isomorphic via an isomorphism that maps $p_1^{L}$ to $q_1^{L}$ and $p_1^{R}$ to $q_1^{R}$.  Since all vertices incident to $p_1^R$ have been accounted for except $p_2^L$, and since the same holds for $q_1^R$  except for $q_2^L$, we must have $\varphi(p_2^L)=q_2^L$.  We may then apply an identical argument to establish the desired isomorphism from $G_2$ to $H_2$, and so on, all the way up to the fact that $p_k^R$ must be sent to $q_k^R$. This completes the proof. 
\end{proof}

We will construct a number of regular triangulations which give rise to graphs of the form considered in Proposition \ref{prop:isomorphic}.   For even $g$, let $P^{||}_g$ denote the parallelogram of genus $g$ with vertices at $(0,3)$, $(1,0)$, $(g/2,3)$, and $((g+2)/2,0)$, as pictured in Figure \ref{figure:parallelogram}.  We will construct regular triangulations of $P^{||}_g$ by tiling it with triangulated copies of $P^{||}_2$, $P^{||}_4$, and $P^{||}_6$.  We will then slightly modify the polygon so that the troplanar graphs obtained from our triangulations have the forms prescribed by Proposition \ref{prop:isomorphic}.

\begin{figure}[hbt]
   		 \centering
		\includegraphics[scale=1]{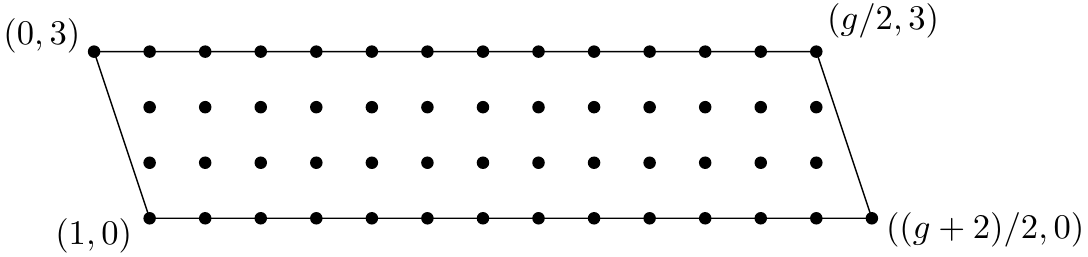}
	\caption{The parallelogram $P^{||}_g$}
	\label{figure:parallelogram}
\end{figure}

We will refer to the  triangulated copies of  $P^{||}_2$ (respectively of $P^{||}_4$ and $P^{||}_6$) as \emph{tiles of genus 2} (respectively \emph{tiles of genus $4$} and \emph{tiles of genus $6$}). To start out, we will use only one tile of genus $2$, namely the one illustrated in Figure \ref{figure:genus2_tile}, which also pictures  a dual tropical curve and the corresponding troplanar graph.  This is not the only troplanar graph that can be obtained from $P^{||}_2$; however, it is the only bridge-less one, which is important if we want to apply Proposition \ref{prop:isomorphic}.  We have also marked two points on the graph as $L$ and $R$:  these are the points where the bridges would attach to this $2$-edge-connected component if the tile appeared in the middle of a triangulation of a larger  $P^{||}_g$.

\begin{figure}[hbt]
   		 \centering
		\includegraphics[scale=1]{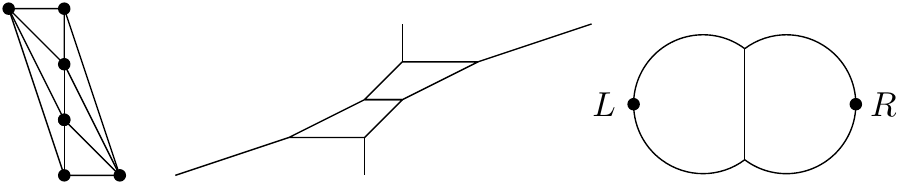}
	\caption{The tile of genus $2$, a dual tropical curve, and the troplanar graph with two marked points}
	\label{figure:genus2_tile}
\end{figure}

We will start out with $8$ tiles of genus $4$, as illustrated in Figure \ref{figure:genus4_tiles}.  Although the boxed groups of graphs would be isomorphic without the marked points, the marked points make them distinct.  We will also begin with $49$ tiles of genus $6$, presented in Appendix \ref{appendix}.  It is important to verify that all the tiles we use are regular triangulations.  Using TOPCOM \cite{TOPCOM}, we searched for all non-regular triangulations of $P^{||}_2$, $P^{||}_4$, and $P^{||}_6$; it turns out these polygons have no non-regular triangulations, so all our tiles are safe to use.

\begin{figure}[hbt]
   		 \centering
		\includegraphics[scale=0.9]{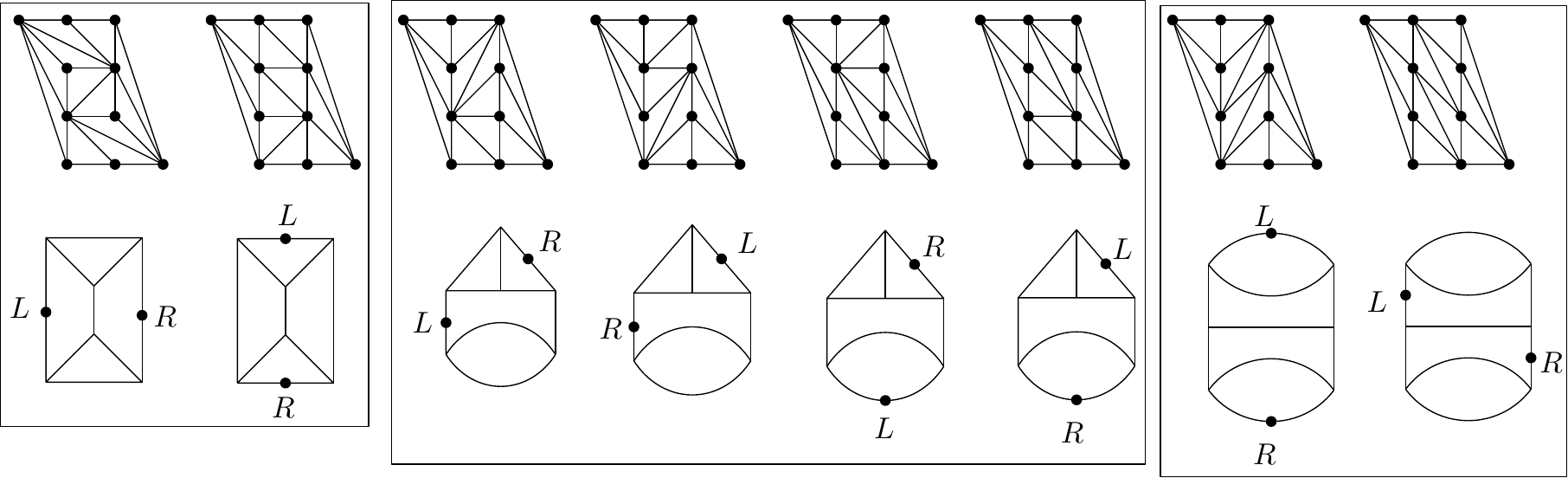}
	\caption{The first eight tiles of genus $4$}
	\label{figure:genus4_tiles}
\end{figure}

An example of a tiling of $P^{||}_{14}$ is presented in the top left Figure \ref{figure:tiled}.  This is a regular triangulation, since patching regular triangulations along edges of lattice length $1$ preserves regularity, as noted in \cite[Proposition 3.4]{countinglattice}. The troplanar graph obtained is also illustrated.  Although it is not quite of the form prescribed by Proposition \ref{prop:isomorphic}, a small modification of the polygon can fix this.  In particular, we can expand our polygon to the quadrilateral of genus $17$ pictured in the bottom left of Figure \ref{figure:tiled}, and triangulating it as shown adds on the left loop and the right genus-$2$ structure.

\begin{figure}[hbt]
   		 \centering
		\includegraphics[scale=0.9]{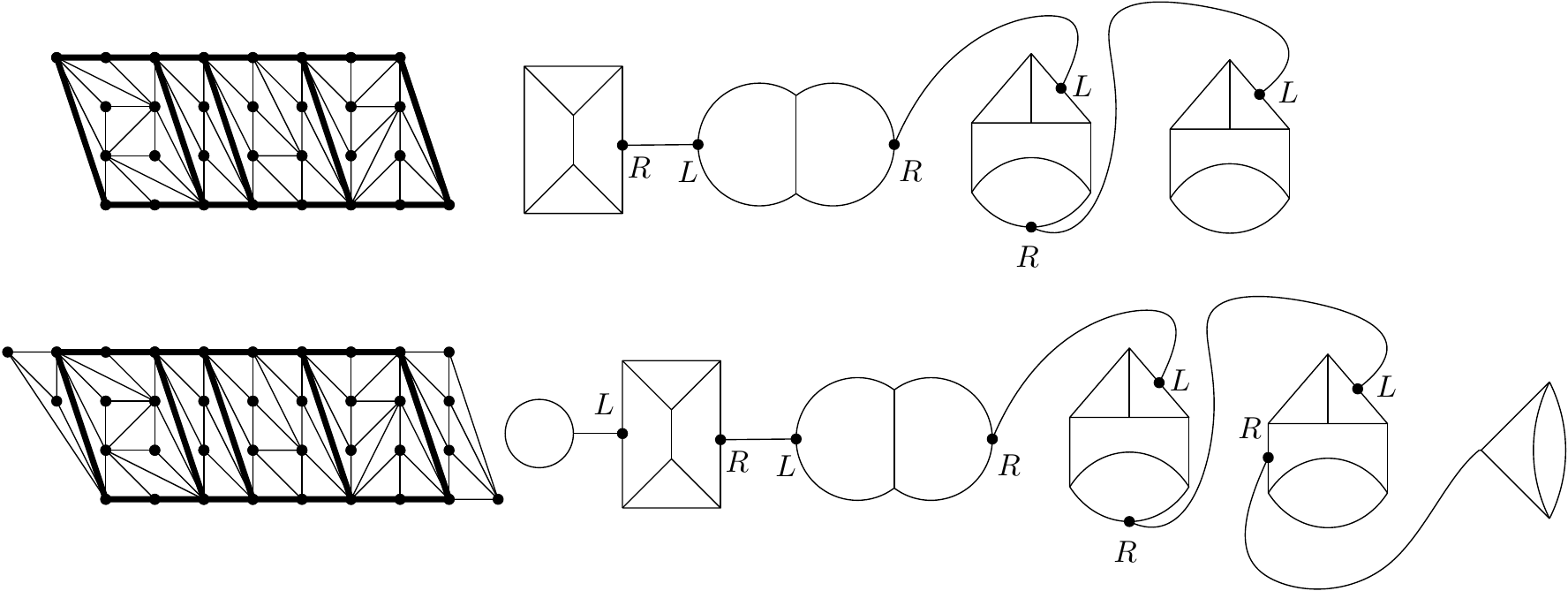}
	\caption{A tiling of $P^{||}_{14}$, and the corresponding troplanar graph; followed by a tiling of $Q_{7}^{\textrm{odd}}$ }
	\label{figure:tiled}
\end{figure}

For an integer $n$ let $Q_{n}^{\textrm{odd}}$ be the trapezoid with vertices at    $(0,3)$, $(2,0)$, $(n+2,3)$, and $(n+3,0)$.  This polygon has genus $2n+3$, and consists of a copy of $P^{||}_{2n}$ with two polygons glued on, one of genus $1$ and one of genus $2$.  Similarly, let $Q_{n}^{\textrm{even}}$ be the pentagon with vertices at $(0,1)$, $(0,3)$, $(2,0)$, $(n+2,3)$, and $(n+3,0)$.  These polygons are illustrated in Figure \ref{figure:odd_and_even}.

\begin{figure}[hbt]
   		 \centering
		\includegraphics[scale=0.8]{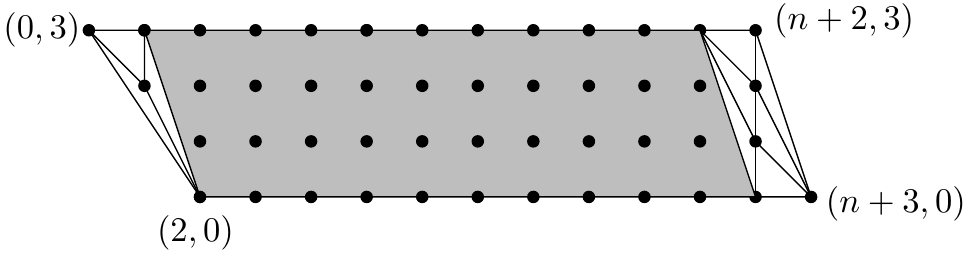}\quad\quad	\includegraphics[scale=0.8]{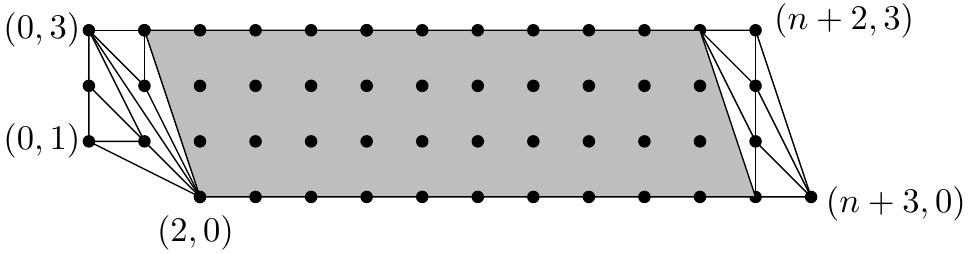}
	\caption{The polygons $Q_{n}^{\textrm{odd}}$ and $Q_{n}^{\textrm{even}}$, partially triangulated}
	\label{figure:odd_and_even}
\end{figure}

\begin{prop}\label{prop:same_sequence}  Let $\mathcal{T}$ and $\mathcal{T}'$ be triangulations of the same polygon, either  $Q_{n}^{\textrm{odd}}$ or $Q_{n}^{\textrm{even}}$, constructed by completing the partial triangulations in Figure \ref{figure:odd_and_even} by tiling the remaining copy of $P^{||}_{2n}$ with some sequences of our $1$ tile of genus $2$, our $8$ tiles of genus $4$, and our $49$ tiles of genus $6$.  Then $\mathcal{T}$ and $\mathcal{T}'$ are regular, and the troplanar graphs  arising from them are isomorphic if and only if they were constructed using the exact same sequence of tiles.
\end{prop}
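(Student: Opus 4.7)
The proof breaks naturally into two parts: establishing regularity, and establishing the ``isomorphic iff same sequence'' characterization. For regularity, I would invoke the gluing principle cited earlier from \cite{countinglattice}: patching regular triangulations along edges of lattice length $1$ yields a regular triangulation. Each individual tile is a triangulation of $P^{||}_2$, $P^{||}_4$, or $P^{||}_6$, and these polygons admit only regular triangulations (verified with TOPCOM as noted in the discussion preceding the proposition). The partial triangulations of the left and right end-pieces in Figure \ref{figure:odd_and_even} are also regular by inspection. Consecutive tiles meet each other, and meet the end-pieces, along translates of the edge from $(0,3)$ to $(1,0)$ (or, on the right-hand side, an analogous edge), each of which has lattice length $1$. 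Hence $\mathcal{T}$ is regular, and the same argument applies to $\mathcal{T}'$.

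For the ``if'' direction of the isomorphism statement, identical tile sequences yield identical triangulations, hence identical troplanar graphs. For the ``only if'' direction, I would first observe that both $G$ and $G'$ fit the template of Proposition \ref{prop:isomorphic}. Each tile-to-tile boundary is a lattice-length-$1$ edge stretching from the top of the polygon to the bottom, i.e.\ a split, which is dual to a bridge in the troplanar graph. The $2$-edge-connected component contributed by each interior tile has exactly two $2$-valent vertices (one on each flanking bridge), which serve as the marked points $p_i^L, p_i^R$ in the notation of Proposition \ref{prop:isomorphic}. The partial triangulations of Figure \ref{figure:odd_and_even} produce the terminal structures on the ends (a single loop on the left, and the small $2$-edge-connected genus-$2$ piece with attached loop on the right, which play the roles of the end decorations in Proposition \ref{prop:isomorphic}). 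Applying that proposition, $G\cong G'$ forces the sequences to have the same length $k=m$ and the corresponding marked components $G_i$, $H_i$ to be isomorphic via maps that send $p_i^L\mapsto q_i^L$ and $p_i^R\mapsto q_i^R$.

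All that remains is to verify that no two tiles from our lists of $1+8+49$ choices produce the same marked $2$-edge-connected graph; this is the main obstacle. It is a finite but substantial case-check, and it was exactly the point of Figure \ref{figure:genus4_tiles}'s caption that, while some tiles in the genus-$4$ list share an unmarked graph, the locations of $L$ and $R$ distinguish them. The practical strategy is to distinguish tiles by invariants rooted at the marked points: whether the marked vertex is incident to a loop, the multiplicities of edges in its neighborhood, the cycle structure of the graph viewed from each marked point, and graph distance from the marked points to other distinguished features. Once this pairwise distinguishability is in hand, distinct sequences of tiles produce distinct sequences of marked $2$-edge-connected components, and therefore, by Proposition \ref{prop:isomorphic}, non-isomorphic graphs, completing the proof.
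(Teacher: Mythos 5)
Your proposal is correct and follows essentially the same route as the paper: regularity via gluing along lattice-length-$1$ edges, the ``if'' direction being immediate, and the ``only if'' direction by fitting $G$ and $H$ into the template of Proposition \ref{prop:isomorphic} and then using the pairwise distinctness of the marked tile graphs. The paper simply asserts that distinctness ``by construction'' (the tiles were selected so that the $L$/$R$ markings separate isomorphic unmarked graphs), whereas you flag it as a finite case-check to be carried out; this is a difference of emphasis, not of method.
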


\begin{proof}
Assume for the moment that we are triangulating the polygon $Q_{n}^{\textrm{odd}}$.  Let $\mathcal{T}$ be constructed using the sequence of tiles $(T_1,\cdots,T_k)$, and let $\mathcal{T}'$ be constructed using the sequence of tiles $(T'_1,\cdots,T'_m)$. Our triangulations $\mathcal{T}$ and $\mathcal{T}'$ are both regular, since they arise from regular triangulations glued along shared edges of lattice length $1$.

Let $G$ and $H$ be the troplanar graphs arising from $\mathcal{T}$ and $\mathcal{T}'$. Certainly if they were constructed from the same sequence of tiles, we have that $G$ is isomorphic to $H$.  Assume now that $G$ is isomorphic to $H$.  By construction, both $G$ and $H$ are of the form in Proposition \ref{prop:isomorphic}, where $G_1,\cdots,G_k$ are the graphs arising from the tiles $T_1,\cdots,T_k$, and $H_1,\cdots,H_m$ are the graphs arising from the tiles $T_1',\cdots,T_k'$.  By Proposition \ref{prop:isomorphic}, $k=m$ and we have that for all $i$, each $G_i$ is isomorphic to $H_i$ via an isomorphism respected the left and right marked points.  Thus for all $i$, $T_i$ and $T_i'$ give rise to isomorphic graphs, including the marked points.  By construction, each tile gives rise to a different marked graph, so $T_i=T_i'$ for all $i$.  We conclude that $G$ and $H$ were constructed from the exact same sequence of tiles.

This argument carries over to the polygon $Q_{n}^{\textrm{even}}$, with the additional footnote that $G_1$ and $H_1$ are both a biedge coming from the lattice point $(1,2)$ in $Q_{n}^{\textrm{even}}$.

\end{proof}

Thus in order to find a lower bound on the number of troplanar graphs of genus $g+3$ or $g+4$, we can count the number of ways to tile the parallelogram $P^{||}_g$ with our tiles of genus $2$, $4$, and $6$.  Before we do this, however, we will argue that we can actually include even more tiles than presented so far, in particular ones that yield graphs with bridges.

We will add one additional tile of genus $2$, and five additional tiles of genus $4$; these are pictured in Figure \ref{figure:genus24_bridges}, along with their marked troplanar graphs.  We will also add  $26$ tiles of genus $6$, illustrated in Appendix \ref{appendix}.  We have selected these tiles so that (when taking markings into consideration) no two give the same ordered pair of graphs, and so that no $2$-edge-connected component they contribute is available from a single tile.  (This is not immediately obvious for the final tile of genus $4$ pictured; however, the location of the bridge relative to the marked points cannot be obtained from our tiles of genus $2$.)

\begin{figure}[hbt]
   		 \centering
		\includegraphics[scale=1]{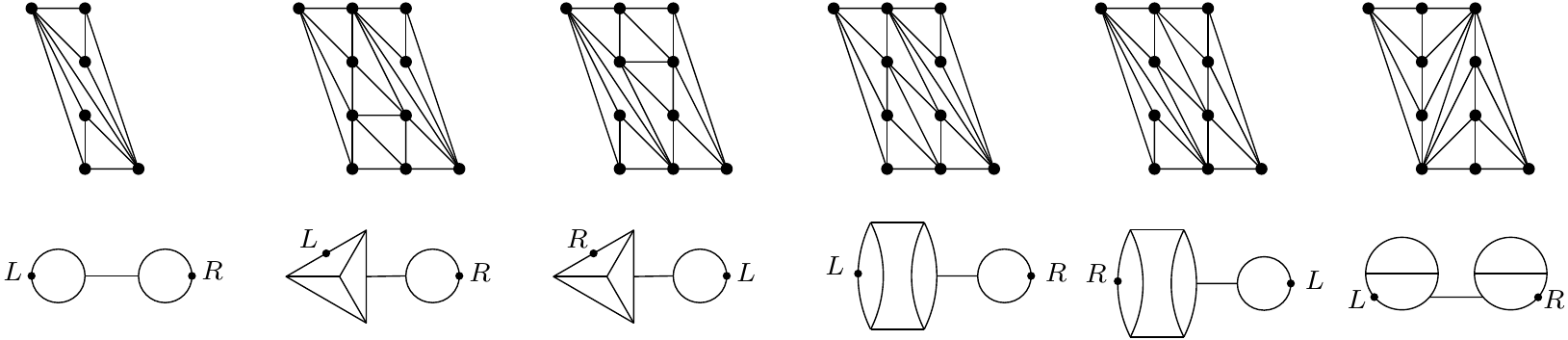}
	\caption{The tiles of genus $2$ and $4$ giving graphs with bridges}
	\label{figure:genus24_bridges}
\end{figure}

\begin{prop} \label{prop:same_sequence2} The result from Proposition \ref{prop:same_sequence} still holds with our additional tiles.
\end{prop}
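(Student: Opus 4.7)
The plan is to mirror the proof of Proposition \ref{prop:same_sequence}, adjusting only for the new phenomenon that a single tile can now contribute several 2-edge-connected components separated by internal bridges. Regularity of $\mathcal{T}$ and $\mathcal{T}'$ requires no new argument: each tile remains a regular triangulation, and gluing regular triangulations along edges of lattice length $1$ preserves regularity by \cite[Proposition 3.4]{countinglattice}, exactly as before.

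The substantive step is to recover the tile sequence from the isomorphism class of the resulting troplanar graph. I would first generalize Proposition \ref{prop:isomorphic} to graphs that are a linear chain of 2-edge-connected components joined by bridges, with a loop on the far left and either a loop or a genus-$2$ piece on the far right. The same inductive argument works verbatim: an isomorphism $\varphi$ must fix the distinguished loop-bearing endpoint, then iteratively match successive bridges and 2-edge-connected components along the chain, sending left and right attachment points to their counterparts. Consequently, the ordered sequence of 2-edge-connected components, each endowed with its left and right attachment markings, is an isomorphism invariant of the graph.

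Next, I would argue that this invariant sequence of marked 2-edge-connected components determines the tile sequence. Each tile $T_i$ contributes a consecutive block of marked 2-edge-connected components: a single one if $T_i$ is bridgeless, and several separated by internal bridges otherwise. The leftmost attachment point of the first component in the block, and the rightmost attachment point of the last, coincide with the left and right marked points of $T_i$. The two conditions imposed in the selection of tiles, namely that no two tiles produce the same marked graph and that no 2-edge-connected component produced by a multi-bridge tile appears as the full graph of any bridgeless tile, guarantee unique parsability: one can walk along the global chain and unambiguously decide where each tile's contribution begins and ends. Combining this with the generalized Proposition \ref{prop:isomorphic} then yields $k=m$ and $T_i = T_i'$ for all $i$, exactly as in the bridgeless case.

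The main obstacle, and the only point requiring genuine work beyond the proof of Proposition \ref{prop:same_sequence}, is verifying those two uniqueness conditions for the finite collection of tiles at hand (the $1+5+26$ new tiles with bridges alongside the $1+8+49$ original bridgeless tiles). This is a bounded but purely mechanical case check: one compares the marked graphs produced by each bridge-bearing tile against those produced by the bridgeless tiles and confirms the stated non-coincidences, taking care at the boundaries of multi-bridge tiles where a constituent 2-edge-connected component might superficially resemble the full graph of a smaller bridgeless tile. Once those checks are in place, the argument concludes in the same manner as Proposition \ref{prop:same_sequence}.
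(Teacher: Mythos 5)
Your proposal is correct and follows essentially the same route as the paper: both reduce to the chain of marked $2$-edge-connected components via Proposition \ref{prop:isomorphic} (which already applies as stated, so no generalization is actually needed) and then recover the tile sequence uniquely using the same two selection conditions on the tiles — that no two bridge-bearing tiles yield the same ordered pair of marked components, and that no component contributed by a bridge-bearing tile is realizable by a bridgeless tile. The paper phrases the parsing step as a contradiction at the first differing tile index, while you phrase it as unique parsability of the chain, but the underlying argument is the same.
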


\begin{proof}  We will assume we are triangulating the polygon $Q_{n}^{\textrm{odd}}$; a similar argument holds for $Q_{n}^{\textrm{even}}$.

Let $\mathcal{T}$, $\mathcal{T}'$, $G$, and $H$ be as in Proposition \ref{prop:same_sequence}, where the sequences of tiles for $\mathcal{T}$ and $\mathcal{T}'$ are $(T_1,\cdots,T_r)$ and $(T_1',\cdots,T_s')$.  It is still the case that $G$ and $H$ are of the form prescribed in Proposition \ref{prop:isomorphic}; however, it might now be that a tile $T_i$ (or $T_i')$ contributes more than one $2$-edge-connected component to $G$ (or $H$).  That is, we might have $r<k$ or $s<m$.

Assume that $G$ is isomorphic to $H$, and suppose for the sake of contradiction that $(T_1,\cdots,T_r)\neq(T_1',\cdots,T_s')$.  Since the two sequences must contribute a total of genus $2n$, they must differ in some $t^{th}$ entry where $1\leq t\leq\min\{r,s\}$.  Let $i$ be the first index for which $T_i\neq T_i'$.  Since $(T_1,\cdots,T_{i-1})=(T_1',\cdots,T_{i-1}')$, isomorphism between $G$ and $H$ is preserved if we contract all portions of $G$ and $H$ arising from the first $i-1$ tiles; thus, we may assume without loss of generality that $i=1$.

Since $G$ is isomorphic to $H$, we know that $G_1$ is isomorphic to $H_1$ and $G_2$ is isomorphic to $H_2$, with the isomorphisms respecting the left and right marked points.  The graphs $G_1$ and $H_1$ must come from $T_1$ and $T_1'$.  Since $T_1\neq T_1'$, at least one of these tiles, say $T_1$, must contribute another $2$-edge-connected component to $G$; thus $G_2$ also comes from $T_1$.  However, by construction no tile contributing two $2$-edge-connected components contributes a $2$-edge-connected component that is available from a tile contributing only one $2$-edge-connected component.  It follows that $T_1'$ must also contribute $H_2$.  This is all $T_1$ and $T_1'$ can contribute, since no tile contributes more than two $2$-edge-connected components.  However, no distinct pair of our tiles $T_1$ and $T_1'$ give the same ordered pair of marked $2$-edge-connected components, a contradiction.  We conclude that $(T_1,\cdots,T_r)=(T_1',\cdots,T_s')$.

\end{proof}

Let $a_n$ denote the number of ways to tile the parallelogram $P^{||}_{2n}$ with our tiles.  Our propositions imply that each of the $a_n$ tilings gives a distinct troplanar graph of genus $2n+3$, and a distinct troplanar graph of genus $2n+4$. Thus, we have $\mathscr{T}(g)\geq a_{\lfloor (g-3)/2\rfloor}$.  To obtain an explicit lower bound on $\mathscr{T}(g)$, we will determine a formula for $a_n$.

\begin{prop}
\label{prop:tilingcount}
Let $a_n$ be the number of ways to tile the parallelogram  $P^{||}_{2n}$ of genus $g=2n$ with our tiles of genus $2$, $4$ and $6$.  Then $a_n=\Omega(\alpha^n)$, where $\alpha\approx 6.1233$ is the unique real root of  $x^3-2x^2-13x-75$.
\end{prop}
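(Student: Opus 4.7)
The approach is to set up a linear recurrence for $a_n$ by slicing tilings from left to right, and then to read off the growth rate from the dominant root of the associated characteristic polynomial.

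First I would observe that the parallelogram $P^{||}_{2n}$ has a natural horizontal structure: a tile of genus $2k$ (for $k\in\{1,2,3\}$) is itself a copy of $P^{||}_{2k}$ and so occupies horizontal extent $k$ inside $P^{||}_{2n}$. Consequently, any tiling of $P^{||}_{2n}$ is uniquely determined by its sequence of tiles read from left to right, and after placing a leftmost tile of genus $2k$ the remainder is a smaller parallelogram $P^{||}_{2(n-k)}$. Combining this with the counts of $2$, $13$, and $75$ tiles of genera $2$, $4$, and $6$ respectively (as assembled in Propositions \ref{prop:same_sequence} and \ref{prop:same_sequence2}) yields the linear recurrence
\[ a_n \;=\; 2\,a_{n-1} \;+\; 13\,a_{n-2} \;+\; 75\,a_{n-3} \qquad (n \ge 3). \]

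Next I would analyze the associated characteristic polynomial $p(x)=x^3-2x^2-13x-75$. Its derivative $p'(x)=3x^2-4x-13$ has real roots at $(2\pm\sqrt{43})/3$, and evaluating $p$ at both critical points yields negative values, so $p$ has exactly one real root $\alpha$. A sign check on $p(6.12)$ and $p(6.13)$ pins $\alpha\approx 6.1233$. The remaining two roots are complex conjugates $\beta, \bar{\beta}$, and the product-of-roots identity $\alpha|\beta|^2 = 75$ gives $|\beta|=\sqrt{75/\alpha}\approx 3.5$, which is strictly less than $\alpha$. Hence $\alpha$ is the strictly dominant root.

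Finally, standard linear recurrence theory gives $a_n = c_1\alpha^n + 2\,\mathrm{Re}(c_2 \beta^n)$ for constants $c_1\in\mathbb{R}$ and $c_2\in\mathbb{C}$ determined by the initial values $a_0, a_1, a_2$. To conclude $a_n = \Omega(\alpha^n)$ it suffices to rule out $c_1 = 0$: if $c_1$ vanished, then $a_n$ would reduce to a purely oscillatory expression in $|\beta|^n$ and would take infinitely many negative values, contradicting the fact that $a_n$ counts tilings and is therefore nonnegative. Thus $c_1 > 0$ and $a_n \sim c_1\alpha^n$, giving the claim. The main obstacle here is bookkeeping rather than mathematics: one must carefully justify that the left-to-right decomposition of a tiling into tiles is unique (so that the recurrence really does factor multiplicatively) and that the tile counts $2$, $13$, and $75$ agree with the collections constructed in the earlier propositions. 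Once the recurrence is in hand, the dominant-root analysis is routine.
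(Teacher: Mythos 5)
Your proposal follows the same skeleton as the paper's proof: the identical recurrence $a_n = 2a_{n-1}+13a_{n-2}+75a_{n-3}$ from the left-to-right decomposition, the same characteristic polynomial, and the observation that its unique real root $\alpha$ strictly dominates the modulus $|\beta|=\sqrt{75/\alpha}\approx 3.5$ of the conjugate pair. Where you genuinely diverge is in showing that the coefficient of $\alpha^n$ in the closed form is positive. The paper does this by brute force: it records the initial values $a_0=1$, $a_1=2$, $a_2=17$, solves the resulting $3\times 3$ linear system numerically in Mathematica to get $A\approx 0.49999$, and then bounds $|2Dr^n\cos(n\theta-\delta)|\le \frac{A}{2}\alpha^n$ for large $n$. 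Your route is softer and arguably more robust --- it needs no numerics and would survive a change in the tile counts --- but as written it leans on the unproved assertion that a nonzero purely oscillatory sequence $2\,\mathrm{Re}(c_2\beta^n)$ must go negative infinitely often. That assertion is true (e.g.\ by Pringsheim's theorem applied to the generating function, whose only poles would be the non-real points $1/\beta,1/\overline{\beta}$), but it deserves a sentence; you should also separately dispose of the degenerate case $c_1=c_2=0$, which contradicts $a_0=1$. A fully elementary patch that avoids negativity altogether: since all terms of the recurrence are nonnegative, $a_n\ge 75\,a_{n-3}$, so $a_{3k}\ge 75^k$, and $75^{1/3}\approx 4.22 > |\beta|$, which already rules out $c_1=0$; nonnegativity then forces $c_1>0$. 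With either repair your argument is complete and gives the same conclusion $a_n=\Omega(\alpha^n)$.
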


\begin{proof}

Any tiling of $P^{||}_{2n}$ will either begin with a tile of genus $2$, followed by a tiling of $P^{||}_{2(n-1)}$; or with a tile of genus $4$, followed by a tiling of $P^{||}_{2(n-2)}$; or with a tile of genus $6$, followed by a tiling of $P^{||}_{2(n-3)}$.  It follows that $a_n$ satisfies the recurrence relation
\[a_n=2a_{n-1}+13a_{n-2}+75a_{n-3}\]
for $n\geq 3$.  The initial conditions are $a_0=1$ (there is one way to tile the empty parallelogram, namely not to tile anything), $a_1=2$ (since there are two tiles of genus $2$), and $a_2=17$ ($13$ from the tiles of genus $4$, and $4$ from the $4$ ways to place two tiles of genus $2$).

The solution to this recurrence relation can be found by studying the roots of the characteristic polynomial $x^3-2x^2-13x-75$.  This polynomial has three distinct roots:  a real root $\alpha\approx 6.1233$, and two complex conjugate roots $\beta= r e^{i\theta}$ and $\overline{\beta}= r e^{-i\theta}$, where $r\approx 3.4998$ and $\theta\approx 2.2007$ (in radians). Any (complex) solution to the recurrence relation has the form\[a_n=A\alpha^n+B\beta^n+C\overline{\beta}^n,\]
where $A,B,C\in\mathbb{C}$.  
Our initial conditions imply 
\[A+B+C=1,\]
\[A\alpha+B\beta+C\overline{\beta}=2,\]
\[A\alpha^2+B\beta^2+C\overline{\beta}^2=17,\]
which we can rewrite as
\[\left(\begin{matrix}1&1&1\\\alpha&\beta&\overline{\beta}\\\alpha^2&\beta^2&\overline{\beta}^2\end{matrix}\right)\left(\begin{matrix}A\\B\\C\end{matrix}\right)=\left(\begin{matrix}1\\2\\17\end{matrix}\right).\]
Numerically solving this system of equations using Mathematica \cite{Mathematica}, we find  $A\approx 0.49999$, $B\approx 0.25001+0.00543i$, and $C=\overline{B}$.  We can then rewrite our solution as
$$a_n=A\alpha^n+2Dr^n\cos(n\theta-\delta),$$
where $D=|B|$ and $\delta=\textrm{arg}(B)$. 
Since  $A>0$ and $\alpha>r$, we have that $|2Dr^n\cos(n\theta-\delta)|\leq \frac{A}{2}\alpha^n$ for sufficiently large $n$.  This implies that \[a_n=A\alpha^n+2Dr^n\cos(n\theta-\delta)\geq \frac{A}{2}\alpha^n\]
for sufficiently large $n$.  Since $\frac{A}{2}>0$, this means that $a_n=\Omega(\alpha^n)$.

\end{proof}

This allows us to deduce the following asymptotic lower bound on $\mathscr{T}(g)$.

\begin{cor}\label{cor:upper} We have $\mathscr{T}(g)=\Omega(\gamma^g)$ as $g\rightarrow\infty$, where $\gamma=\sqrt{\alpha}\approx 2.47$ is the the square-root of the unique real root of $x^3-2x^2-13x-75$.
\end{cor}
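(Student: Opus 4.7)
The plan is to assemble the bound directly from the tile-counting machinery already in place. By Proposition \ref{prop:same_sequence} combined with Proposition \ref{prop:same_sequence2}, each distinct tiling of $P^{||}_{2n}$ by the allowed tiles of genus $2$, $4$, and $6$ extends, via the trapezoid $Q_n^{\textrm{odd}}$ (respectively the pentagon $Q_n^{\textrm{even}}$), to a regular unimodular triangulation producing a pairwise non-isomorphic troplanar graph of genus $2n+3$ (respectively $2n+4$). Hence for every $n\geq 0$ we have $\mathscr{T}(2n+3)\geq a_n$ and $\mathscr{T}(2n+4)\geq a_n$, where $a_n$ is the tile count of Proposition \ref{prop:tilingcount}.

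The next step is purely asymptotic. Proposition \ref{prop:tilingcount} gives $a_n=\Omega(\alpha^n)$ where $\alpha\approx 6.1233$ is the unique real root of $x^3-2x^2-13x-75$. Setting $\gamma:=\sqrt{\alpha}$, a direct substitution yields, for $g$ odd with $g=2n+3$,
\[\mathscr{T}(g)\geq a_n=\Omega(\alpha^n)=\Omega\!\left(\gamma^{2n}\right)=\Omega\!\left(\gamma^{g-3}\right)=\Omega(\gamma^g),\]
since $\gamma^{-3}$ is a positive constant that can be absorbed into the $\Omega$-notation. The identical calculation with $g=2n+4$ handles the even case, giving $\mathscr{T}(g)=\Omega(\gamma^g)$ along the even subsequence as well.

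Finally, to combine the two parities into a single statement $\mathscr{T}(g)=\Omega(\gamma^g)$, I would take the constants implicit in each parity and choose the smaller one, or equivalently observe that the bound $\mathscr{T}(g)\geq a_{\lfloor(g-3)/2\rfloor}$ holds for all sufficiently large $g$ and that $\alpha^{\lfloor(g-3)/2\rfloor}\geq \gamma^g\cdot\alpha^{-3}$ for all $g\geq 3$. This yields the required asymptotic lower bound with constant factor $A\alpha^{-3}/2>0$ from the solution of the linear recurrence.

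There is no real obstacle here: the corollary is a bookkeeping consequence of the already-established recurrence bound. The only minor care needed is to handle the floor and the parity split cleanly, which is just the observation that translating a genus index by a bounded amount changes an exponential bound only by a constant factor and hence is invisible to $\Omega$-notation.
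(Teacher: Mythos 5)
Your proposal is correct and follows essentially the same route as the paper: both derive $\mathscr{T}(g)\geq a_{\lfloor(g-3)/2\rfloor}$ from Propositions \ref{prop:same_sequence} and \ref{prop:same_sequence2}, invoke $a_n=\Omega(\alpha^n)$ from Proposition \ref{prop:tilingcount}, and absorb the bounded index shift into the $\Omega$-constant. Your explicit handling of the two parities is just a slightly more spelled-out version of the paper's single floor-function computation.
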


\begin{proof}
By Propositions \ref{prop:same_sequence} and \ref{prop:same_sequence2}, we have have $\mathscr{T}(g)\geq a_{\lfloor(g-3)/2\rfloor} $ for all $g\geq 5$.  Since $a_n=\Omega(\alpha^n)$ by Proposition \ref{prop:tilingcount}, we have $a_{\lfloor(g-3)/2\rfloor}=\Omega(\alpha^{\lfloor(g-3)/2\rfloor})=\Omega(\alpha^{g/2})=\Omega(\sqrt{\alpha}^g)$.  We conclude that $\mathscr{T}(g)=\Omega(\sqrt{\alpha}^g)$.
\end{proof}

We close our paper with several possible directions for future research.  Our upper and lower bounds are roughly  exponential in $g$, with bases around $2^{11/3}\approx 12.699$ and $\gamma\approx 2.47$, respectively; closing the gap between these bases is a natural goal for future work. For a better lower bound, the methods of Section \ref{sec:lower} could be pushed further, perhaps by considering tiles of higher genus. One approach for a better upper bound, in the spirit of Proposition \ref{prop:width_3}, could be to  bound $\mathscr{T}^{(2)}(g)$, the number of $2$-edge-connected troplanar graphs.  If one proved a bound of the form  $\mathscr{T}^{(2)}(g)=O(b^g)$, then Corollary \ref{corollary:two-connected} would imply $\mathscr{T}(g)=O((2b)^g)$.  Thus to improve Theorem \ref{theorem:best}, it would suffice to show $\mathscr{T}^{(2)}(g)=O(b^g)$ for some $b< 2^{11/3}/2\approx 6.3496$.

In general there is no easily implemented single criterion (or list of such criteria) for determining whether a graph is troplanar, and finding new criteria could be helpful in furthering our understanding of such graphs. For instance, although many results (such as Proposition \ref{prop:sprawling} and Theorem \ref{theorem:tie}) forbid certain structures that cannot be dual to general unimodular triangulations, no existing work has specifically used the \emph{regularity} of triangulations to forbid certain structures.  In fact, it is unknown whether considering dual graphs of nonregular triangulations yields any non-troplanar graphs. There is also no known example  of a $2$-edge-connected trivalent non-troplanar graph that is neither nonplanar nor crowded.  Finding such an example could point to  other general obstructions to troplanarity.

There are other reasonable families of graphs one could study that get at the notion of being ``tropically planar.'' In the notation of \cite{BJMS}, the moduli space of all metric graphs of genus $g$ arising from smooth tropical plane curves is written $\mathbb{M}_g^{\textrm{planar}}$.  Asking if a graph $G$ is tropically planar, then, is asking whether $G$ appears (for at least one choice of edge lengths) in $\mathbb{M}_g^{\textrm{planar}}$.  However, one could also study $\textrm{trop}(\mathcal{M}_g^{\textrm{nd}})$, the tropicalization of the moduli space of nondegenerate curves studied in \cite{cv}.  This is the space of all metric graphs of genus $g$ that arise as the Berkovich skeleton of a nondegenerate curve, and strictly contains $\mathbb{M}_g^{\textrm{planar}}$ for $g\geq 3$.  There are certainly combinatorial types of graphs that appear in $\textrm{trop}(\mathcal{M}_g^{\textrm{nd}})$ that do not appear in $\mathbb{M}_g^{\textrm{planar}}$; for instance, all graphs of genus $3$ and $4$ appear in $\textrm{trop}(\mathcal{M}_3^{\textrm{nd}})$ and $\textrm{trop}(\mathcal{M}_4^{\textrm{nd}})$, respectively, even though not all graphs of genus $3$ and $4$ are tropically planar.  Even for $g=5$, determining which combinatorial types of graphs appear in $\textrm{trop}(\mathcal{M}_g^{\textrm{nd}})$ appears to be an open question.

There are also alternate definitions of what is meant by a ``tropical plane curve.''  Instead of considering tropical curves that are a subset of $\mathbb{R}^2$, one could more generally consider tropical curves that are subsets of $2$-dimensional tropical linear spaces embedded in some $\mathbb{R}^n$.  The authors of \cite{hmrt} take this approach, and show that every metric graph of genus $3$ (with a low-dimensional set of exceptions) arises in such a $2$-dimensional tropical linear space in $\mathbb{R}^n$ for $n\leq 5$; this includes metric graphs whose underlying graph is the non-troplanar graph of genus $3$.  A natural question to ask is which graphs of genus $g$ appear in tropical curves embedded in such tropical linear spaces for $g\geq 4$.

\begin{appendices}
\section{Tiles of genus 6} \label{appendix}

In this appendix we present the tiles of genus $6$ used in our construction of troplanar graphs in Section \ref{sec:lower}.  The $49$ tiles yielding bridgeless graphs are pictured in Figure \ref{figure:tiles_g6}.   The $26$ tiles yielding  graphs with bridges are pictured in Figure \ref{figure:bridge_tiles_g6}.  In both figures, we box together isomorphism classes of graphs, which are distinguished by the marked $L$ and $R$ points.

\begin{figure}[hbt]
   		 \centering
		\includegraphics[width=\textwidth]{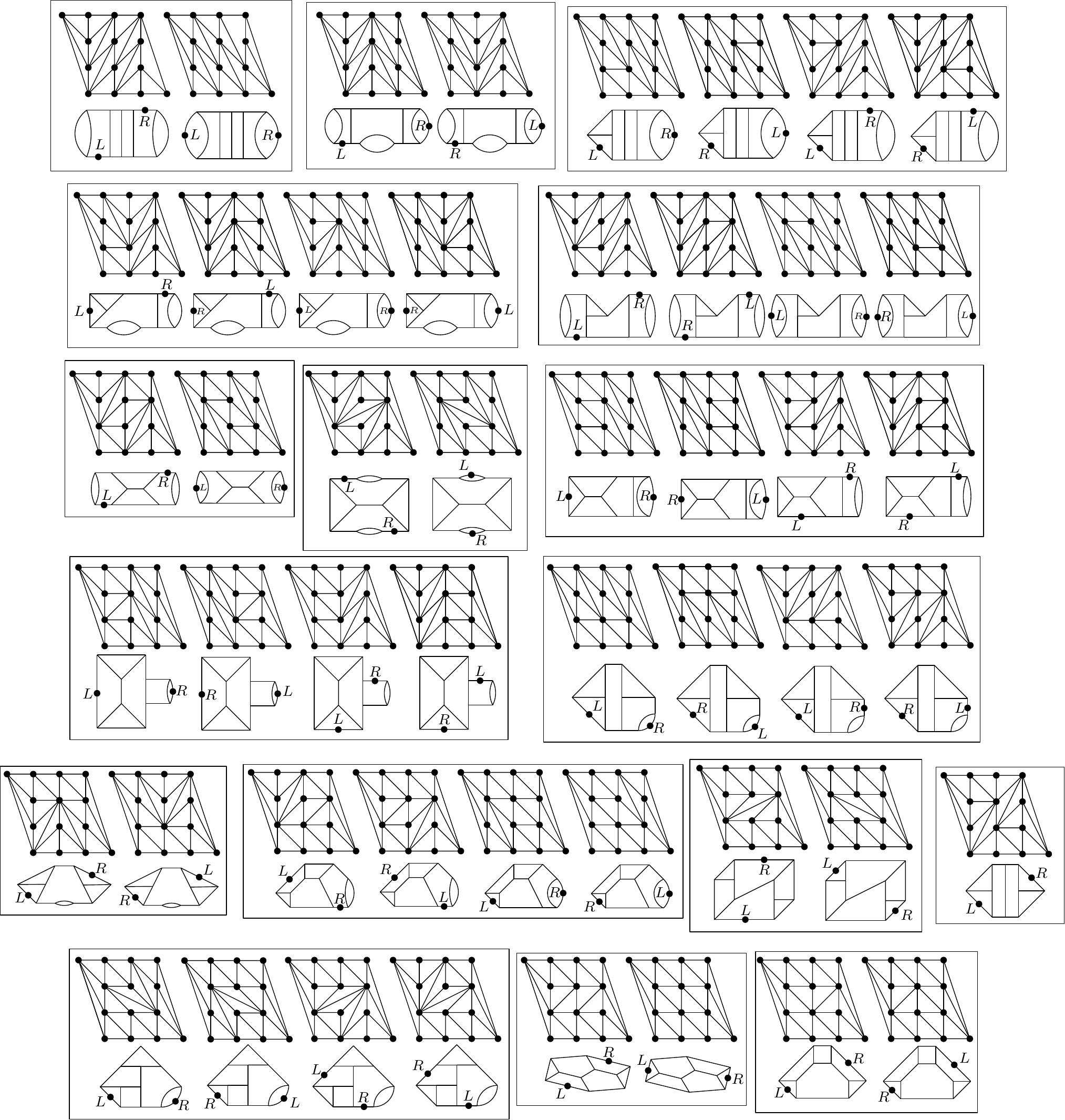}
	\caption{The tiles of genus $6$ giving graphs without bridges}
	\label{figure:tiles_g6}
\end{figure}

\begin{figure}[hbt]
   		 \centering
		\includegraphics[width=\textwidth]{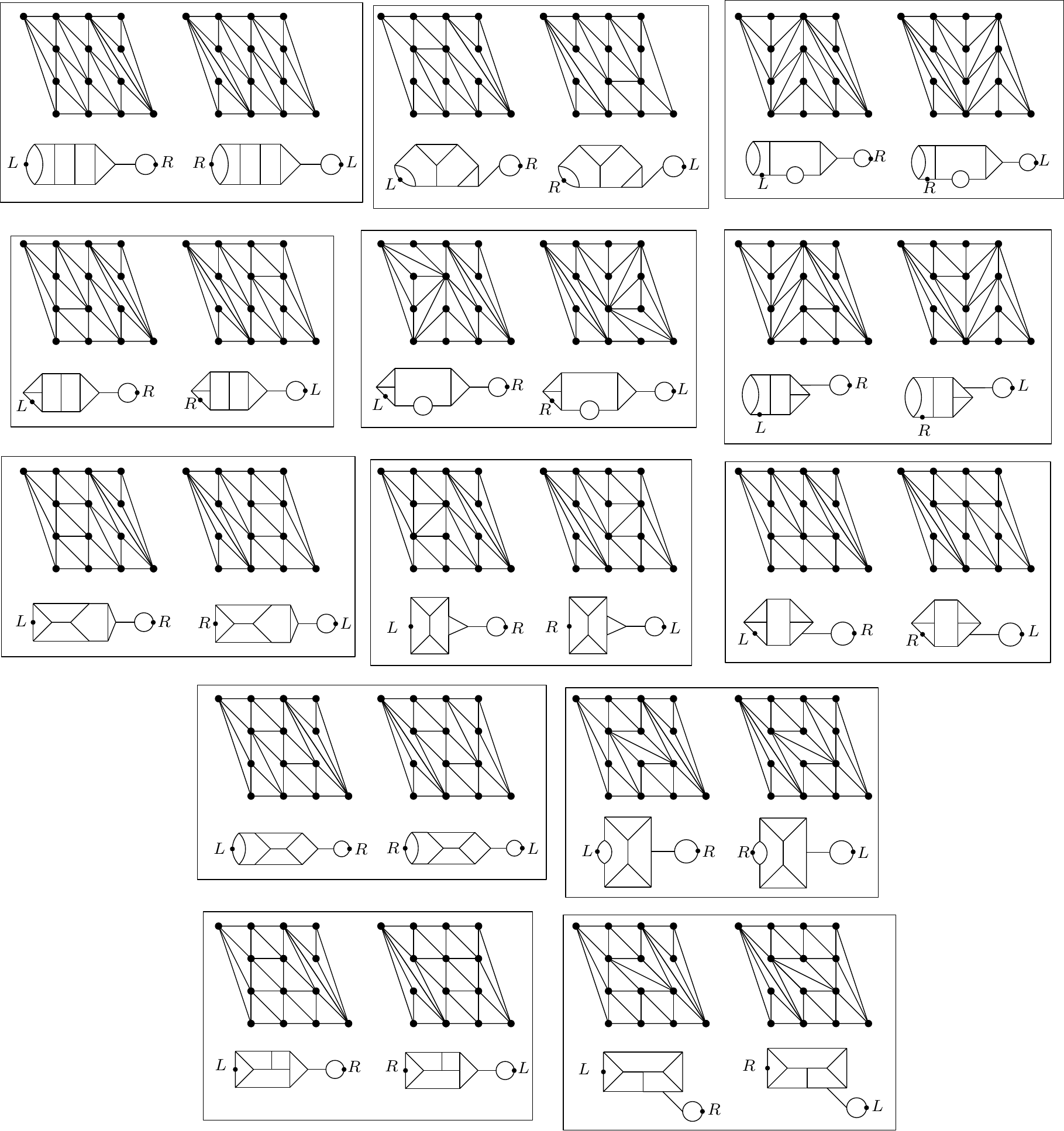}
	\caption{The tiles of genus $6$ giving graphs with bridges}
	\label{figure:bridge_tiles_g6}
\end{figure}

\end{appendices}




\bibliographystyle{abbrv}

\end{document}